\documentclass{amsart}
\allowdisplaybreaks
\usepackage{amssymb, amsmath, amsthm, wasysym, mathrsfs}
\usepackage[backref]{hyperref}
\usepackage[alphabetic,backrefs,lite]{amsrefs}
\usepackage{setspace}
\usepackage[all]{xy}
\usepackage{fullpage}
\usepackage{mathtools}
\usepackage{enumitem}
\DeclareMathOperator{\Bl}{Bl}

\DeclareMathOperator{\codim}{codim}

\DeclareMathOperator{\Eff}{Eff}

\DeclareMathOperator{\Pic}{Pic}

\DeclareMathOperator{\Spec}{Spec}

\theoremstyle{definition}
\newtheorem{dfn}{Definition}[section]

\newtheorem{lem}[dfn]{Lemma}
\newtheorem{prop}[dfn]{Proposition}
\newtheorem{co}[dfn]{Corollary}
\newtheorem{rem}[dfn]{Remark}

\newtheorem{nota}[dfn]{Notation}
\newtheorem{ass}[dfn]{Assumption}
\theoremstyle{theorem}
\newtheorem{theorem}[dfn]{Theorem}
\title{The distribution of semi-integral points on a class of singular cubic hypersurfaces}
\date{}
\author{Haruki Ito}
\address{Graduate School of Mathematics, Nagoya University, Furocho, Chikusaku, Nagoya 464-8602, Japan}
\email{haruki.ito.c8@math.nagoya-u.ac.jp}

\begin{document}
\maketitle

\begin{abstract}
Let $k$ be a positive integer and let $X_k$ be the cubic hypersurface defined by the equation $x^3-(y_1^2+\cdots+y_{4k}^2)z=0$.
In this paper, we give an asymptotic formula for the counting function of semi-integral points on $X_k$.
We also prove that this asymptotic formula agrees with Manin's conjecture for $\mathcal{M}$-points \cite[Conjecture~1.4]{Moe26a} on the $a$-invariant and the $b$-invariant.
\end{abstract}

\section{Introduction}
\subsection{Main Results}

Let $k$ be a positive integer and let $X_k$ be the closed subscheme of $\mathbb{P}_{\mathbb{Q}}^{4k+1}$ defined by the equation 
\[
x^3-(y_1^2+\cdots+y_{4k}^2)z=0.
\]
In this paper, we study the distribution of semi-integral points on $X_k$.
More precisely, fixing a finite set $S$ of prime numbers,
we consider rational points $P=(x:y_1:\dots:y_{4k}:z)\in X_k(\mathbb{Q})$ satisfying $v_p(z)-v_p(x)\neq1$ for every prime $p\notin S$, where $v_p$ denotes the $p$-adic valuation.
To study the distribution of semi-integral points,
we introduce the height function.
For a rational point $P=(x:y_1:\dots:y_{4k}:z)\in X_k(\mathbb{Q})$, we define the height $H(P)$ by
\[
H(P)=\max\{|x|,\sqrt{y_1^2+\cdots+y_{4k}^2},|z|\},
\]
where $x,y_1,\dots,y_{4k},z$ are coprime integers.
We consider the counting function of semi-integral points on $X_k$
\[
N(B)=\#\left\{P=(x:y_1:\dots:y_{4k}:z)\in X_k(\mathbb{Q})\middle|
\begin{array}{l}
x\neq0,\\
H(P)\leq B,\\
\text{for all } p\notin S,\,v_p(z)-v_p(x)\neq1
\end{array}\right\}.
\]
The main theorem of this paper gives an asymptotic formula for $N(B)$.

\begin{theorem}\label{main}
The asymptotic formula
\[
N(B)=\frac{4k\mathcal{G}_S(1,2k-1)}{(3k-1)(4^k-1)|B_{2k}|\zeta(4k-1)}B^{4k-1}\log B+O(B^{4k-1})
\]
holds, where $B_{2k}$ is the $2k$-th Bernoulli number, the constant $\mathcal{G}_S(1,2k-1)=\prod_p\mathcal{G}_p(1,2k-1)$ is positive, and local factors $\mathcal{G}_p(1,2k-1)$ are given in Proposition~\ref{Gp}.
\end{theorem}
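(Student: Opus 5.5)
We would prove Theorem~\ref{main} by reducing $N(B)$ to a weighted sum of the representation numbers $r_{4k}(n)=\#\{y\in\mathbb{Z}^{4k}: y_1^2+\cdots+y_{4k}^2=n\}$, and then using the classical formula for $r_{4k}$ coming from the theta series of $\mathbb{Z}^{4k}$.

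\emph{Step 1 (fibering over $(x,z)$).} Since $x\neq0$ and $Q(y)=y_1^2+\cdots+y_{4k}^2\ge0$, every point has $z\neq0$, $z\mid x^3$, and $\operatorname{sign} z=\operatorname{sign} x$. Fix the representative with $x>0$; this leaves a factor $1$ or $\tfrac12$ coming from $\pm$, which we track. Then
\[
N(B)=\sum_{\substack{1\le x\le B,\ 1\le z\le B,\ z\mid x^3\\ x^3/z\le B^2\\ v_p(z)-v_p(x)\neq1\ (p\notin S)}} r^{*}_{4k}\!\left(x^3/z;\,\gcd(x,z)\right),
\]
where $r^*_{4k}(n;g)$ counts $y$ with $Q(y)=n$ and $\gcd(g,y_1,\dots,y_{4k})=1$. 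We remove the coprimality by M\"obius inversion over $d\mid\gcd(x,z)$. This replaces $r^*$ by $r_{4k}(n/d^2)$ summed against $\mu(d)$.

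\emph{Step 2 (modular input).} We write $r_{4k}(n)=\rho_{4k}(n)+a(n)$. Here $\rho_{4k}$ is the Eisenstein part: $\rho_{4k}(n)$ equals $\frac{8k}{(4^k-1)|B_{2k}|}\,n^{2k-1}$ times a multiplicative twisted divisor sum $\sigma^\flat_{2k-1}(n)/n^{2k-1}$, whose $2$-adic factor is explicit and which is bounded above and below. The cusp part satisfies $a(n)\ll n^{k-1/2+\varepsilon}$ by Deligne. Inserting the cusp part into Step~1 contributes at most $\sum_{x\le B}\sum_{z\mid x^3}(x^3/z)^{k-1/2+\varepsilon}$ over $x^3/z\le B^2$. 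This is $\ll B^{2k+1+\varepsilon}$ up to divisor factors, hence $O(B^{4k-1})$ since $k\ge1$.

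\emph{Step 3 (main term).} The Eisenstein part leaves a sum of $(x^3/z)^{2k-1}h(x,z)$, where $h$ is a bounded function that is multiplicative in the pair $(x,z)$. It encodes $\sigma^\flat$, the M\"obius sum, the divisibility $z\mid x^3$, and the local conditions $v_p(z)-v_p(x)\neq1$. Writing $x=\prod p^{a_p}$ and $z=\prod p^{b_p}$ with $0\le b_p\le 3a_p$, we form the two-variable Dirichlet series
\[
F(s,w)=\sum_{x,z} h(x,z)\,x^{-s}z^{-w}=\prod_p F_p(s,w).
\]
After substituting $x^{6k-3}z^{-(2k-1)}$ and using the constraints $z\le B$ and $x^3\le B^2 z$, the count becomes a hyperbola-type region in $(\log x,\log z)$.

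The dominant contributions come from $z$ with $x^3\asymp B^2z$, so that $n=x^3/z\asymp B^2$, with $z$ ranging over the dyadic scales $1\le z\le B$. For the primes $p\notin S$, the local condition forbids $b_p=a_p+1$. We will check that the "diagonal" family $b_p\in\{a_p,\dots\}$ yields a factor $\zeta$ at the relevant exponent. Concretely, we expect $F$ near the point corresponding to $(1,2k-1)$ to behave like $\zeta(\cdot)\zeta(\cdot)\mathcal{G}(\cdot,\cdot)$, with $\mathcal{G}$ absolutely convergent there. The two simultaneous poles then produce exactly one power of $\log B$, which is the $\log B$ in the theorem.

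We would carry this out by elementary partial summation rather than a double Perron integral. Fix the "unbalanced part" $m=x^3/z$, or parametrize $z=x^3/m$. Sum over $m\le B^2$ weighted by $m^{2k-1}$; this gives the $B^{4k-2}$ scale with density $\prod_p$. The remaining free variable then has counting density $\sim c\cdot dt/t$ over a range of length $\asymp\log B$, giving $B^{4k-1}\log B$ after the $x\le B$ integration. The factor $4k/(3k-1)$ should emerge from the volume integral $\int (x^3/z)^{2k-1}$ over the region, combined with the $\zeta(4k-1)^{-1}$ coming from the M\"obius sum over $d$ with $n/d^2$ at exponent $2k-1$. The constant $\mathcal{G}_S(1,2k-1)=\prod_p\mathcal{G}_p(1,2k-1)$ is the Euler product left over from Proposition~\ref{Gp}, and its positivity follows factor by factor.

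\emph{Main obstacle.} The hardest part is Step~3: identifying the correct local structure of $h$ so that the Euler product factors as (two $\zeta$-type poles)$\times$(convergent $\mathcal{G}$), and controlling the error terms at boundary scales, such as $z$ near $B$ or $x^3/z$ near $B^2$, uniformly to $O(B^{4k-1})$. We would first isolate the contribution where $x$ and $z$ have "balanced" $p$-adic valuations for all $p$ by a change of variables $x=uv$, $z=u^3w$-style. We would then verify that all other configurations converge absolutely with one fewer logarithm.
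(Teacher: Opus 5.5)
Your Steps~1 and~2 are fine. They amount to the paper's M\"obius inversion and its Eisenstein/cusp splitting; Deligne is stronger than needed, since the paper uses $O(d^k)$. One correction there: the Eisenstein constant is $\frac{4k}{(4^k-1)|B_{2k}|}$, not $\frac{8k}{(4^k-1)|B_{2k}|}$ (e.g.\ $r_4(1)=8$, $r_8(1)=16$). So the $\pm$ bookkeeping must be redone before you can claim to recover the stated constant.

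\textbf{Step~3 is the genuine gap.} It carries the whole theorem but is only a list of expectations, and its one concrete structural prediction is wrong.
\begin{enumerate}
\item \emph{Three polar factors, not two.} In the paper's variables $(n,d)=(|x|,Q(y))$ one has $\mathcal{F}(s,w)=\zeta(s)\zeta(s+2w-4k+2)\zeta(s+3w-6k+3)\mathcal{G}_S(s,w)$ (Proposition~\ref{Fzeta}). These \emph{three} factors come from $v_p(d)=0$, $v_p(z)=v_p(x)$ and $v_p(z)=0$, and all are singular at $(1,2k-1)$. So no factorization ``$\zeta\cdot\zeta\cdot\mathcal{G}$ with $\mathcal{G}$ absolutely convergent there'' exists.
\item \emph{What the semi-integral condition does.} The part that matters is the deletion of the local term with $(v_p(x),v_p(d))=(1,1)$, namely $p^{-s-w}(1+p^{2k-1})$. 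Without it there would be a fourth factor $\zeta(s+w-2k+1)$ and a $(\log B)^2$. Only after this deletion is $\mathcal{G}_p=1+O(p^{-5/4})$ near $(1,2k-1)$. You never identify or verify this.
\item \emph{Where the single $\log B$ comes from.} After the $w$-residues there are two double poles at $s=1$, from $\zeta(s)\zeta(\frac{s+2}{3})$ and from $\zeta(s)\zeta(\frac{3-s}{2})$. One must show the second gives no $\log B$. Its logarithmic coefficient is proportional to $\log X-\frac12\log Y$, which vanishes on $Y=X^2$ but not on the range needed for $T(B)$.
\item \emph{Where $\frac{1}{3k-1}$ comes from.} It does not come from an unspecified volume integral. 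In the paper it is $\frac{1}{2(3k-1)}=\frac{1}{3(2k-1)}-\frac{1}{6(2k-1)(3k-1)}$, the difference of the leading coefficients of $S(B,B^2)$ and $T(B)$. In other words, it is created by the cut $|z|\le B$.
\end{enumerate}

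\textbf{The elementary mechanism you sketch also fails as stated.} For fixed $m=x^3/z$, the admissible $x$ satisfy $x\le(Bm)^{1/3}$ and $\prod_p p^{\lceil v_p(m)/3\rceil}\mid x$. This modulus usually exceeds $(Bm)^{1/3}$; a squarefree $m>B^{1/2}$ admits no $x$ at all. So there is no per-$m$ density. The accumulated $O(1)$ errors can be as large as $\sum_{m\le B^2}m^{2k-1}\asymp B^{4k}$, which is also the true size of that sum, not $B^{4k-2}$.

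The main term is carried by $m$ of the shape $v^3t^2$, with $x=uvt$ and $z=u^3t$ up to an absolutely convergent correction. There $\sum_{v\le B^{2/3}}v^{-1}$ produces both the $\log B$ and the factor $\frac{1}{2(3k-1)}$. Making this rigorous needs a genuinely two-dimensional treatment. One option is the paper's double Perron formula for the smoothed $M(X,Y)$ with the contour shifts above. The other is this three-variable parametrization with explicit lattice-point counting.

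Your remark that only $z\le B$ and $x^3\le B^2z$ matter is worth keeping. In the coordinates $(z,Q(y))$ the height region is the box $\{z\le B,\ Q(y)\le B^2\}$, with $zQ(y)$ a cube. A double Perron argument there, modelled on the paper's treatment of $S(B,B^2)$, would avoid the split $S(B,B^2)-T(B)$.

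This matters because you assert the $O(B^{4k-1})$ error term without any mechanism, and it is decided exactly at this boundary. In the paper the cut $Q(y)>x^3/B$ is handled by the sandwich of Lemma~\ref{STS}. Its two sides differ by an amount of order $B^{4k-1}\log B/\phi(B)$, with $\phi(B)=O(\sqrt{\log B})$.
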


As shown in \S~3, the semi-integral condition corresponds to the $\mathcal{M}$-condition (see \cite[Definition~4.12]{Moe26a}) of rational points on the smooth model $\widetilde{X_k}$ of $X_k$.
Hence, Theorem~\ref{main} can be interpreted as a result of the distribution of $\mathcal{M}$-points on $\widetilde{X_k}$.
Moreover, we show that the asymptotic formula in Theorem~\ref{main} coincides with Manin's conjecture for $\mathcal{M}$-points proposed in \cite[Conjecture~1.4]{Moe26a}.

\begin{prop}\label{MManin}
The asymptotic formula in Theorem~\ref{main}
agrees with \cite[Conjecture~1.4]{Moe26a}
with respect to the $a$-invariant and the $b$-invariant introduced in \cite[Definition~4.37]{Moe26a}.
\end{prop}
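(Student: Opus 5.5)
We will compute the Moerman invariants directly on the smooth model $\widetilde{X_k}$ and check that they give the exponent $4k-1$ of $B$ and the exponent $1$ of $\log B$ in Theorem~\ref{main}. By \cite[Definition~4.37]{Moe26a}, the $a$-invariant is
\[
a=\inf\{t\in\mathbb{R} : tL+K_{\widetilde X}+\textstyle\sum_i(1-\tfrac1{m_i})D_i\in \Eff(\widetilde X)\},
\]
where $L$ is the pullback of the hyperplane class and the $D_i$ are the boundary divisors carrying the $\mathcal{M}$-condition. The $b$-invariant is the codimension of the minimal face of $\Eff(\widetilde X)$ containing the adjoint divisor $aL+K_{\widetilde X}+\sum_i(1-\tfrac1{m_i})D_i$. (The precise weights on the $D_i$ should be read off from the $\mathcal{M}$-condition; I take them to be of this form.)

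\textbf{Step 1: geometry.} The singular locus of $X_k$ is $\{x=0,\ y_1^2+\cdots+y_{4k}^2=0\}$. It contains the point $Q=(0:\dots:0:1)$, and near $Q$ the hypersurface looks like the cone $x^3=q(y)$. I will take $\widetilde{X_k}$ to be the resolution constructed in \S3, which should be obtained by blowing up $Q$ (and possibly a further centre). This gives $\Pic(\widetilde{X_k})$ of rank $2$, generated by $L$ and the exceptional divisor $E$. Using the description from \S3, the semi-integral condition $v_p(z)-v_p(x)\neq1$ corresponds to the $\mathcal{M}$-condition with respect to a single divisor $D$ (either $E$ or the strict transform of $\{x=0\}$) and a fixed multiplicity condition.

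\textbf{Step 2: canonical class and effective cone.} I will compute $K_{\widetilde X}$ by adjunction and the discrepancy formula. On $X_k$ we have $K=-(4k+2-3)L=-(4k-1)L$, so $K_{\widetilde X}=-(4k-1)L+cE$, where $c$ is the discrepancy computed from the local cone structure. Next I identify $\Eff(\widetilde X)$ as the cone spanned by $E$ and the strict transform $\widetilde{H}=L-\lambda E$ of the hyperplane $\{x=0\}$ (or $\{z=0\}$, whichever is more singular at $Q$). The multiplicity of $x$ at $Q$ along the weighted structure determines $\lambda$. To justify that these two classes span $\Eff(\widetilde X)$, I will use the standard rank-two argument: each is effective, and each is rigid with nonpositive self-restriction, exhibited by a moving curve class.

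\textbf{Step 3: the invariants.} I write the adjoint divisor in the basis $\{E,\widetilde H\}$ and find the smallest $t$ for which both coefficients are nonnegative. The expectation is $a=4k-1$, with both coefficients becoming zero simultaneously. In that case the adjoint divisor is $0$, the minimal face containing it is the vertex, and $b=\operatorname{rank}\Pic=2$. This gives $B^{a}(\log B)^{b-1}=B^{4k-1}\log B$, matching Theorem~\ref{main}.

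The main obstacle is Step 2, specifically determining the discrepancy $c$ and the weight $1-\tfrac1m$ exactly, so that the cancellation in Step 3 occurs. If the exponents do not cancel, the natural first thing to revisit is which divisor carries the $\mathcal{M}$-condition, using the valuation computation $v_p(z)-v_p(x)$ on the charts of the blow-up from \S3.
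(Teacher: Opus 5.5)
There is a genuine gap, and it is exactly the step you flag as the main obstacle. The rank-two picture is wrong, and once the discrepancy is computed it cannot produce $(a,b)=(4k-1,2)$.

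Write $h=y_1^2+\cdots+y_{4k}^2$. The singular locus of $X_k$ is not $\{x=h=0\}$: the $y_i$-derivative of $x^3-hz$ is $-2y_iz$, which is nonzero there as soon as $z\neq0$. The singular locus is the point $(0:\cdots:0:1)$ together with the codimension-two locus $\{x=z=h=0\}$, along which $X_k$ is transversally the $A_2$ singularity $x^3=hz$. Hence the resolution has three exceptional divisors $E_1,E_2,E_3$. Since $\pi^*\mathcal{O}(1),E_1,E_2,E_3$ are linearly independent in $N^1(\widetilde{X_k})$, the Picard rank is at least $4$, not $2$.

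Moreover, the vertex of $x^3=h$ is a double point in $\mathbb{A}^{4k+1}$, so its discrepancy is $c=(4k+1)-1-2=4k-2>0$. That is, $K_{\widetilde{X_k}}=(1-4k)\pi^*\mathcal{O}(1)+(4k-2)E_1$. With $c>0$ your own model already breaks:
\begin{itemize}
\item With boundary term $wE$ ($w\geq0$), the adjoint divisor at $t=4k-1$ is $(c+w)E\neq0$. It spans an extremal ray, so $b=1$ and there is no $\log B$.
\item With boundary term $w\widetilde H$, $\widetilde H=L-\lambda E$, both coefficients vanish only at $t=4k-1-c/\lambda<4k-1$.
\end{itemize}
So ``the adjoint divisor is $0$, hence $b=\operatorname{rank}\Pic=2$'' is not the mechanism, and no choice of weights rescues it.

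In the paper, the $\mathcal{M}$-condition is a condition on the pair $(D_1,D_2)$, with $\mathfrak{M}=\mathbb{N}^2\setminus\{(0,1)\}$. Here $D_1$ is the strict transform of $\{z=0\}$ and $D_2=E_2$. The condition comes from the identity $2n_p(\mathcal{D}_1,\widetilde P)+n_p(\mathcal{D}_2,\widetilde P)=\max\{v_p(z)-v_p(x),0\}$, and $E_1$ plays no role in it.

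The pair is quasi-Campana for $\frac12E_2$, so the adjoint divisor at $t=4k-1$ is $(4k-2)E_1+\frac12E_2$. This is effective but not big, which gives $a=4k-1$.

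For $b$ you cannot just posit a Campana-type recipe. The paper computes the generators $\Gamma_M=\{(0,2),(0,3),(1,0),(1,1)\}$ and uses \cite[Lemma~4.39]{Moe26a} to reduce to the geometric Campana structure. It then uses \cite[Proposition~6.10]{Moe26a} to identify $b$ with the codimension of the minimal \emph{supported} face of $\Eff^1(\widetilde{X_k})$ containing $(4k-2)E_1+\frac12E_2$. That face contains the independent classes $E_1$ and $E_2$. A nef curve class then cuts out a codimension-two supported face containing them: the strict transform of a complete-intersection curve on the target of an MMP contracting exactly $E_1$ and $E_2$.

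So $b=2$ arises from a nonzero rigid adjoint divisor lying in a codimension-two supported face of a cone of rank at least four. It does not come from the vertex of a rank-two cone, and your argument would have to be rebuilt on this geometry.
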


\subsection{Previous Work}

Manin's conjecture predicts an asymptotic formula for the number of rational points of bounded height on smooth Fano varieties defined over number fields.
It was proposed by Y.~Manin and his collaborators in the late 1980s and is formulated in \cite{FMT89, BM90, Pey95, BT98, Pey03, LST22}.

For an early work on the distribution of rational points on singular hypersurfaces, we refer to \cite{dlB98}. 
In \cite{LWZ19}, using the method of \cite{dlB98} together with Jacobi's four-square theorem, the distribution of rational points on the variety considered in this paper is studied. 
Furthermore, in \cite{LWZ20, Tak22, Zha22}, the distribution of rational points on similar varieties is studied by refining the computations.
Studies of more general singular hypersurfaces are given in \cite{Wen23, JWZ25}.
In this paper, based on the computations in \cite{LWZ19}, we determine the distribution of semi-integral points on the variety under consideration.

Recently, the distribution of semi-integral points defined by conditions on intersection multiplicities with divisors on integral models has been extensively studied.
The distribution of integral points defined by conditions on intersection multiplicities has also been studied in \cite{CLT12a, CLT12b, TBT13, Cho19}.
Campana points are among the most extensively studied classes of semi-integral points.
Campana points are among the most extensively studied classes of semi-integral points.
They were introduced by F. Campana~\cite[Definition~4.1.1]{Cam05} and D. Abramovich~\cite[Definition~2.4.17]{Abr09}, and the Manin's conjecture for Campana points is formulated in \cite[Conjecture~1.1]{PSTVA21} and \cite[Conjecture~8.3]{CLTBT26}.
These studies are based on the notion of Campana orbifolds developed in \cite{Cam04, Cam05, Cam11, Cam15}.
The distribution of Campana points has been studied in \cite{BVV12, VV12, BY21, PSTVA21, Shu22, Str22, Xia22, BBK24, PS24, CLTBT26}.
Related counting problems for geometric Darmon points have been studied in \cite{SS24, Ara25}.
Geometric Darmon points were introduced in \cite[Definition~2.10]{MNS24} based on ideas arising from H.~Darmon's work \cite{Dar97} on the generalized Fermat equation.
Weak Campana points are defined by more delicate conditions, and their distribution is studied in \cite{Abr09, AVA18, Str22}.

A further generalization of these notions is given by $\mathcal{M}$-points, introduced in \cite[Definition~3.12]{Moe24} and \cite[Definition~3.12]{Moe26a}. 
A conjecture on the distribution of $\mathcal{M}$-points on smooth varieties is formulated in \cite[Conjecture~1.4]{Moe26a}. 
In \cite[Theorem~3.3]{Moe26b}, the distribution of $\mathcal{M}$-points on smooth Fano toric varieties is studied, and the result is shown to agree with \cite[Conjecture~1.4]{Moe26a}.
As explained in \cite[\S~3.3]{Moe26a}, by choosing the parameter
$\mathcal{M}$ appropriately, $\mathcal{M}$-points recover several
previously studied classes of semi-integral points, including
Campana points and weak Campana points.

Using the resolution $\pi\colon \widetilde{X_k}\to X_k$ constructed in \cite[\S~2]{LWZ19}, Theorem~\ref{main} can be interpreted as a result on the distribution of $\mathcal{M}$-points on $\widetilde{X_k}$. 
Under this interpretation, we show that the asymptotic formula in Theorem~\ref{main} agrees with \cite[Conjecture~1.4]{Moe26a} with respect to the $a$-invariant and the $b$-invariant.
The $\mathcal{M}$-points considered in this paper differ from Campana points and weak Campana points.

\subsection{Structure of this paper}
In \S~2, we introduce the notation.
In \S~3, we show that Theorem~\ref{main} can be interpreted as a result on the distribution of $\mathcal{M}$-points introduced in \cite{Moe24, Moe26a}.
We also show that the asymptotic formula in Theorem~\ref{main} agrees with \cite[Conjecture~1.4]{Moe26a} on the $a$-invariant and the $b$-invariant.
In \S~4, we prove Theorem~\ref{main} by refining the method of \cite{LWZ19}.

\subsection{Method}

The proof of Theorem~\ref{main} is based on the method of
\cite{LWZ19}.
First, by using the inclusion-exclusion principle,
we reduce the problem to counting suitable tuples $(x,y_1,\cdots,y_{4k},z)$ of integers.
Next, by fixing the values of $x$ and
$y_1^2+\cdots+y_{4k}^2$,
we reduce the problem to the estimation of suitable divisor sums.
We then derive the main term by analyzing the corresponding complex integrals using the method of \cite{dlB98} together with Perron's formula \cite[Theorem II.2.3]{Ten95}.

In this paper, by introducing new auxiliary functions,
we obtain an error term smaller than that obtained by
the previous method.

\subsection{Acknowledgement}
The author would like to thank his advisor Sho Tanimoto for continuous support and encouragement.
The author thanks Wataru Takeda for the helpful comments on \cite{Tak22}.
The author thanks Boaz Moerman for the helpful comments on $\mathcal{M}$-points and the structure of the paper.
The author is grateful to Shu Nimura for the helpful discussions on the resolution of $X_k$ and its Picard group.
The author thanks Shuhei Katsuta for the helpful comments for carefully reading the manuscript.
This work was financially supported by JST SPRING, Grant Number JPMJSP2125.
The author would like to take this opportunity to thank the ``THERS MAKE NEW Standards Program for the Next Generation Researchers''.

\section{Notation}

Throughout this paper, we use the following notation.

\begin{nota}
Let $k$ be a positive integer and let $X_k$ be a subscheme of $\mathbb{P}_{\mathbb{Q}}^{4k+1}$ defined by the equation
\[
x^3-(y_1^2+\cdots+y_{4k}^2)z=0.
\]
For a ring $R$ and a positive integer $n$, we denote the affine space $\Spec R[x_1,\cdots,x_n]$ by $\mathbb{A}_R^n$.
\end{nota}

\begin{nota}\label{2natural number}\,
\begin{enumerate}
\item We denote the set of all non-negative integers by $\mathbb{N}$.
\item For a prime number $p$ and a non-zero integer $n$, we denote the exponent of the highest power of $p$ dividing $n$ by $v_p(n)$.
We extend $v_p$ to be a group homomorphism $v_p\colon\mathbb{Q}^\times\to\mathbb{Z}$.
\item We fix a finite set $S$ of prime numbers.
\item We denote the ring of $S$-integers of $\mathbb{Q}$ by $\mathcal{O}_S$.
\end{enumerate}
\end{nota}

\begin{nota}\label{2geometry}
Let $X$ be a projective variety defined over $\mathbb{Q}$.
\begin{enumerate}
\item We denote the Picard group of $X$ by $\Pic(X)$.
\item We denote the group of Cartier divisors on $X$
modulo numerical equivalence by $N^1(X)$.
\item We denote the cone generated by numerical classes of effective divisors on $X$ by $\Eff^1(X)$, and its topological closure by $\overline{\Eff}^1(X)$.
\end{enumerate}
\end{nota}

\begin{nota}
\begin{enumerate}
\item For functions $f,g\colon X\to\mathbb{C}$, the notation $f(x)\ll g(x)$ or equivalently $f(x)=O(g(x))$ means that there exists a constant $C>0$ such that
\[
|f(x)|\le C|g(x)|
\]
for all $x\in X$.
\item For functions $f, g, h\colon X\to\mathbb{R}$, the notation $f(x)+O(g(x))\leq h(x)$ means that there exists a function $R\colon X\to\mathbb{R}$ such that
\[
R(x)=O(g(x))\qquad\text{and}\qquad f(x)+R(x)\leq h(x)
\]
for all $x\in X$.
The same convention applies to the reverse inequality.
\end{enumerate}
\end{nota}

\section{Semi-integral Condition}
In this section, we consider the semi-integral condition
\[
\text{for\,\,all\,\,}p\notin S, v_p(z)-v_p(x)\neq1
\]
for a rational point $P=(x:y_1:\dots:y_{4k}:z)\in X_k(\mathbb{Q})$.
Let $\mathcal{X}_k$, $\mathcal{P}$, and $\mathcal{Q}$ be the following closed subschemes of $\mathbb{P}_{\mathcal{O}_S}^{4k+1}$:
\[\left\{
\begin{array}{ll}
\mathcal{X}_k&:x^3-(y_1^2+\cdots+y_{4k}^2)z=0,\\
\mathcal{P}&:x=y_1=\cdots=y_{4k}=0,\\
\mathcal{Q}&:x=y_1^2+\cdots+y_{4k}^2=z=0.
\end{array}
\right.
\]
Let $\pi\colon\Bl_{(\mathcal{P},\mathcal{Q})}\mathbb{P}_{\mathcal{O}_S}^{4k+1}\to\mathbb{P}_{\mathcal{O}_S}^{4k+1}$ be the blow-up of $\mathbb{P}_{\mathcal{O}_S}^{4k+1}$ along $(\mathcal{P},\mathcal{Q})$ and let $\widetilde{\mathcal{X}_k}$ be the strict transform of $\mathcal{X}_k$.
Let $\mathcal{D}_1$ be the strict transform of the closed subscheme of $\mathbb{P}_{\mathcal{O}_S}^{4k+1}$ defined by $z=0$, and let $\mathcal{D}_2$ be the exceptional divisor of $\pi$ corresponding to the closed subscheme of $\mathbb P_{\mathcal{O}_S}^{4k+1}$ defined by $x=z=0$.
In the open subset of $\mathbb{P}_{\mathcal{O}_S}^{4k+1}\times\mathbb{P}_{\mathcal{O}_S}^2$ defined by $y_{4k}\neq0$, the scheme $\widetilde{\mathcal{X}_k}$ is the closed subscheme of $\mathbb{A}_{\mathcal{O}_S}^{4k+2}\times\mathbb{P}_{\mathcal{O}_S}^2$ defined by the equations
\[
\left\{\begin{array}{l}
(x:y_1^2+\cdots+y_{4k-1}^2+1:z)=(u:v:w),\\
xu^2-vw=0.
\end{array}\right.
\]
In this open set, the scheme $\mathcal{D}_1$ is defined by $x=z=u=w=0$ and the scheme $\mathcal{D}_2$ is defined by $x=y_1^2+\cdots+y_{4k-1}^2+1=z=w=0$.

\begin{rem}
Let $\widetilde{X_k}$ be the generic fiber of $\widetilde{\mathcal{X}_k}$.
Then the morphism $\pi\colon\widetilde{X_k}\to X_k$ is the resolution of $X_k$ constructed in \cite[\S~2]{LWZ19}.
\end{rem}

\begin{nota}
For each $P=(x:y_1:\dots:y_{4k}:z)\in X_k(\mathbb{Q})$ with $x\neq0$, $\widetilde{P}\in\widetilde{X_k}(\mathbb{Q})$ denotes the unique rational point such that $\pi(\widetilde{P})=P$.
\end{nota}

We recall the definition of intersection multiplicity.
This notion is used in the definition of $\mathcal{M}$-points.

\begin{dfn}
Let $Q\in\widetilde{X_k}(\mathbb Q)$ and let $p\notin S$.
By the valuative criterion of properness, there exists a unique $\mathbb{Z}_p$-point $\mathcal{Q}_p\in\widetilde{\mathcal X_k}(\mathbb{Z}_p)$ such that the following diagram commutes:
\[
\xymatrix{
\Spec\mathbb{Q}_p\ar[r]\ar[d]&\Spec\mathbb{Q}\ar[r]^Q&\widetilde{X_k}\ar[r]&\widetilde{\mathcal{X}_k}\ar[d]
\\
\Spec\mathbb{Z}_p\ar[rrr]\ar[rrru]^{\mathcal{Q}_p}&&&\Spec\mathcal{O}_S.
}
\]
The pull-back of the divisor $\mathcal{D}\subseteq\widetilde{\mathcal X_k}$ via $\mathcal Q_p$ defines a closed subscheme of $\Spec\mathbb Z_p$ of the form $\Spec(\mathbb Z_p/p^N\mathbb Z_p)$, where $N\in\mathbb{N}\cup\{\infty\}$ is uniquely determined, and we set $p^\infty\mathbb Z_p=0$.

We define
\[
n_p(\mathcal D,Q):=N
\]
and call it the intersection multiplicity of
$\mathcal{D}$ and $Q$ at $p$.
\end{dfn}

\begin{prop}
Let $p\notin S$ and let $P=(x:y_1:\dots:y_{4k}:z)\in X_k(\mathbb{Q})$, where $x,y_1,\dots,y_{4k},z$ are coprime integers and $x\neq0$.
Let $h=y_1^2+\cdots+y_{4k}^2$ and let $A=\min\{v_p(x),v_p(h),v_p(z)\}$.
Then:
\begin{enumerate}
\item $\displaystyle n_p(\mathcal{D}_1,\widetilde{P})=\left\{
\begin{array}{ll}
v_p(x)-v_p(h)&\text{if }0<v_p(z)\text{ and }A=v_p(h),\\
0&\text{otherwise},
\end{array}\right.$
\item $\displaystyle n_p(\mathcal{D}_2,\widetilde{P})=\left\{
\begin{array}{ll}
v_p(z)-v_p(x)&\text{if }0<v_p(z)\text{ and }A=v_p(x),\\
v_p(h)&\text{if }0<v_p(z)\text{ and }A=v_p(h),\\
0&\text{otherwise}.
\end{array}\right.$
\end{enumerate}
\end{prop}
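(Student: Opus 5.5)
The plan is to compute both multiplicities directly in local coordinates. I will write down the $\mathbb{Z}_p$-point $\mathcal{Q}_p$ extending $\widetilde{P}$ explicitly, choose an affine chart of $\mathbb{P}^{4k+1}_{\mathcal{O}_S}\times\mathbb{P}^2_{\mathcal{O}_S}$ containing the image of its closed point, and read off $N$. On that chart the pullback of the ideal of $\mathcal{D}_i$ is generated by the images of its local generators, so $N$ is the minimum of their $p$-adic valuations (and $N=0$ if some generator is a unit).

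\emph{Step 1: the extension.} Since $x,y_1,\dots,y_{4k},z$ are coprime integers, $(x:y_1:\dots:y_{4k}:z)$ is already a $\mathbb{Z}_p$-point of $\mathbb{P}^{4k+1}$. Away from the centre, the second factor of $\widetilde{P}$ is $(u:v:w)=(x:h:z)$. Dividing by $p^A$ gives the primitive vector $(x/p^A:h/p^A:z/p^A)$, which defines a $\mathbb{Z}_p$-point of $\mathbb{P}^2$. By the uniqueness in the valuative criterion, this pair is $\mathcal{Q}_p$. Throughout I use the relation $3v_p(x)=v_p(h)+v_p(z)$ coming from $x^3=hz$.

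\emph{Step 2: case split by which of $u,v,w$ is a unit.}
\begin{enumerate}
\item If $A=v_p(h)$, work in the chart $v=1$, with coordinates $u=x/h$ and $w=z/h$. The generators of $\mathcal{D}_1$ pull back to elements of valuations $v_p(x)$, $v_p(z)$, $v_p(x)-v_p(h)$ and $v_p(z)-v_p(h)$. If $v_p(z)=0$, then $z$ is a unit and $N=0$. Otherwise, since $v_p(h)\le v_p(x)$, we have $v_p(z)-v_p(h)=3v_p(x)-2v_p(h)\ge v_p(x)-v_p(h)$, so the minimum is $v_p(x)-v_p(h)$. For $\mathcal{D}_2$ the generators are $x$, $z$, $w$ and the local generator of the exceptional ideal; its minimum valuation should come out to $v_p(h)$ when $v_p(z)>0$.
\item If $A=v_p(x)<v_p(h)$, work in the chart $u=1$. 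Then $\mathcal{D}_1$, which requires $u=0$, is not met, so $n_p(\mathcal{D}_1,\widetilde{P})=0$. For $\mathcal{D}_2$ the relevant coordinate is $w=z/x$, whose valuation $v_p(z)-v_p(x)$ should give the multiplicity when $v_p(z)>0$. Here $x$ is a unit times $p^{v_p(x)}$ and the other generators have larger valuation.
\item If $A=v_p(z)$ is strictly smaller than the other two valuations, work in the chart $w=1$. Both $\mathcal{D}_1$ and $\mathcal{D}_2$ need $w=0$, so both multiplicities are $0$.
\end{enumerate}
Ties among the valuations are handled by taking whichever chart applies; the formulas agree on the overlaps. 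In particular, when $A=v_p(z)=0$ we get zero, consistent with the ``otherwise'' clauses.

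\emph{Main obstacle.} The chart $y_{4k}\neq0$, where the paper writes explicit equations, need not contain the reduction of $\mathcal{Q}_p$: $p$ may divide all $y_i$, or divide $y_{4k}$ alone. Also, the paper's description of $\mathcal{D}_2$ as $x=h=z=w=0$ is the set-theoretic locus, not a local generator of the exceptional Cartier divisor. So the real work is to describe $\mathcal{D}_1$ and $\mathcal{D}_2$ by local equations on every chart $\{y_j\neq0\}$, $\{x\neq0\}$ and $\{z\neq0\}$, crossed with $\{u\neq0\}$, $\{v\neq0\}$ and $\{w\neq0\}$. I would first use the symmetry in the $y_j$ to reduce to $y_{4k}\neq0$. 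I would then treat the charts $x\neq0$ and $z\neq0$ separately, noting that there $h$ or $x$ generates the relevant part of the centre. The most delicate point is identifying the local equation of $\mathcal{D}_2$ on the $v=1$ chart: I expect it to be generated by $h$ (or by $x$ and $z/h$), which is what yields the value $v_p(h)$.
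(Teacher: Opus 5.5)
Your route is the paper's: pull back local generators of $\mathcal{D}_1,\mathcal{D}_2$ on the chart $y_{4k}\neq0$, take the least $p$-adic valuation, and split according to which of $v_p(x),v_p(h),v_p(z)$ equals $A$. The values you obtain or predict in cases (1)--(3) agree with the paper's. The proposal is incomplete in the two places you defer, and both close in a few lines.

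\textbf{The chart problem.} No computation on $\{x\neq0\}$ or $\{z\neq0\}$ is needed. Your remark that $h$ or $x$ generates the centre there is also off, since $\mathcal{Q}$ does not meet those charts at all. The paper argues as follows. If $p\nmid z$ (this includes $p\nmid x$, because $x^3=hz$), then the closed point of $\mathcal{Q}_p$ lies off $\pi^{-1}\{z=0\}$. That set contains $\mathcal{D}_1$, the strict transform of $\{z=0\}$, and $\mathcal{D}_2$, which lies over $\mathcal{Q}\subset\{z=0\}$. So both multiplicities vanish, matching the ``otherwise'' clauses. If $p\mid z$, then $p\mid x$, so coprimality makes some $y_i$ a $p$-unit, by symmetry $y_{4k}$. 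In particular, your case ``$p$ divides every $y_i$'' forces $p\nmid z$ and is already covered. On this chart the $\mathbb{P}^2$-coordinates are $(x/y_{4k}:h/y_{4k}^2:z/y_{4k})$, not literally $(x:h:z)$. Since $y_{4k}$ is a unit the valuations agree, which is what your Step~1 needs.

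\textbf{The local equation of $\mathcal{D}_2$.} This is the point you call delicate. On $\{v\neq0\}$, normalized by $v=1$, the blow-up gives $x=hu$ and $z=hw$, and the strict transform $xu^2-vw=0$ becomes $w=hu^3$. Hence $(x,h,z,w)=(h)$ and $(x,z,u,w)=(u)$ there. On $\{u\neq0\}$ one has $x=vw$, $h=v^2w$, $z=vw^2$, so $(x,h,z,w)=(w)$. Thus the paper's equations $x=h=z=w=0$ and $x=z=u=w=0$ are scheme-theoretically correct, and your worry that they are only set-theoretic is unfounded. In your case (1) this gives $n_p(\mathcal{D}_2,\widetilde{P})=\min\{v_p(x),v_p(h),v_p(z),v_p(z)-v_p(h)\}=v_p(h)$, because $v_p(z)-v_p(h)=3v_p(x)-2v_p(h)\geq v_p(h)$. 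Your alternative guess, generators $x$ and $z/h$, is wrong: $(x,z/h)=(hu,hu^3)=(hu)$, which would give $v_p(x)$ rather than $v_p(h)$ whenever $v_p(h)<v_p(x)$.
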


\begin{proof}
If $v_p(x)=0$ or $v_p(z)=0$, then, by the defining equations of $\mathcal{D}_i$, we have
\[
n_p(\mathcal{D}_i,\widetilde{P})=0
\]
for $i=1,2$.
Therefore, (1) and (2) hold.

We consider the case where $v_p(x)>0$ and $v_p(z)>0$.
In this case, there exists $1\leq i\leq4k$ such that $v_p(y_i)=0$.
By symmetry, we may assume that $v_p(y_{4k})=0$.
Write
\[
\widetilde{P}=(x,y_1,\dots,y_{4k-1},z,(u:v:w))\in(\mathbb{A}_{\mathbb{Q}}^{4k+1}\times\mathbb{P}_{\mathbb{Q}}^2)(\mathbb{Q})
\]
on the open subset defined by $y_{4k}\ne0$.
\begin{enumerate}
\item Since $\mathcal{D}_1$ is defined by $x=z=u=w=0$, we have
\begin{align*}
n_p(\mathcal{D}_1,\widetilde{P})&=\min\{v_p(x),v_p(z),v_p(u),v_p(w)\}
\\
&=\min\{v_p(x),v_p(z),v_p(x)-A,v_p(z)-A\}
\\
&=\min\{v_p(x),v_p(z)\}-A
\\
&=\left\{\begin{array}{ll}
\min\{v_p(x),v_p(z)\}-v_p(h)&\text{if }A=v_p(h), \\
0&\text{otherwise.}
\end{array}\right.
\end{align*}
If $A=v_p(h)$, then $v_p(z)=3v_p(x)-v_p(h)\geq2v_p(x)>v_p(x)$, hence we have $n_p(\mathcal{D}_1,\widetilde{P})=v_p(x)-v_p(h)$.
\item Since $\mathcal{D}_2$ is defined by $x=y_1^2+\cdots+y_{4k-1}^2+1=z=w=0$, we have
\[
n_p(\mathcal{D}_2,\widetilde{P})=\min\{v_p(x),v_p(h),v_p(z)-A\}.
\]
\begin{enumerate}
\item The case $v_p(x)=A$.

Since
\begin{align*}
v_p(x)-(v_p(z)-A)&=2v_p(x)-v_p(z)=v_p(h)-v_p(x)\geq0,
\\
v_p(h)-(v_p(z)-A)&=4v_p(x)-2v_p(z)\geq0,
\end{align*}
we have $n_p(\mathcal{D}_2,\widetilde{P})=v_p(z)-A=v_p(z)-v_p(x)$.
\item The case $v_p(h)=A$.

Since
\[
(v_p(z)-A)-v_p(h)=3v_p(z)-6v_p(x)=3(v_p(x)-v_p(h))\geq0,
\]
we have $n_p(\mathcal{D}_2,\widetilde{P})=v_p(h)$.
\item The case $v_p(z)=A$.

In this case, we have $n_p(\mathcal{D}_2,\widetilde{P})=0$.\qedhere
\end{enumerate}
\end{enumerate}
\end{proof}

Let $\mathfrak{M}=\mathbb{N}^2\setminus\{(0,1)\}$.
We define
\[
M=((D_1,D_2),\mathfrak{M}),\qquad\mathcal{M}=((\mathcal{D}_1,\mathcal{D}_2),\mathfrak{M}).
\]

We recall the notion of $\mathcal{M}$-points introduced in \cite[Definition~3.12]{Moe26a}.

\begin{dfn}
Let $Q\in\widetilde{X_k}(\mathbb{Q})$ be a rational point.
We say that $Q$ is an $\mathcal{M}$-point over $\mathcal{O}_S$ if for all primes $p\notin S$,
\[
(n_p(\mathcal{D}_1,Q),n_p(\mathcal{D}_2,Q))\in\mathfrak{M}
\]
holds.
We denote the set of $\mathcal{M}$-points over $\mathcal{O}_S$ by $(\widetilde{\mathcal{X}_k},\mathcal{M})(\mathcal{O}_S)$.
\end{dfn}

By a direct calculation, we have the following corollary.

\begin{co}
Let $p\notin S$ and let $P=(x:y_1:\dots:y_{4k}:z)\in X_k(\mathbb{Q})$ with $x\neq0$.
Then we have 
\[
2n_p(\mathcal{D}_1,\widetilde{P})+n_p(\mathcal{D}_2,\widetilde{P})=\max\{v_p(z)-v_p(x),0\}.
\]
In particular, the following conditions are equivalent.
\begin{enumerate}
\item The point $P$ satisfies the semi-integral condition 
\[
\text{for all } p\notin S,\,v_p(z)-v_p(x)\neq1.
\]
\item The corresponding point $\widetilde{P}$ is an $\mathcal{M}$-point over $\mathcal{O}_S$.
\end{enumerate}
\end{co}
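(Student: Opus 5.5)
The plan is to reduce everything to the preceding Proposition, which computes $n_p(\mathcal{D}_1,\widetilde{P})$ and $n_p(\mathcal{D}_2,\widetilde{P})$ in terms of $v_p(x)$, $v_p(h)$, $v_p(z)$, where $h=y_1^2+\cdots+y_{4k}^2$ and $A=\min\{v_p(x),v_p(h),v_p(z)\}$. First I normalize $P$ so that $x,y_1,\dots,y_{4k},z$ are coprime integers. Since the quantity $v_p(z)-v_p(x)$ is invariant under rescaling the coordinates, no generality is lost. Coprimality of the $y_i$ together with $x,z$ gives $A=0$; this holds because any common power of $p$ dividing $x$, $z$ and $h$ forces, via the equation $x^3=hz$, only the configurations already treated in the Proposition. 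The one relation used throughout is
\[
3v_p(x)=v_p(h)+v_p(z),
\]
valid because $x\neq0$ and $x^3=hz$ (this also forces $h\neq0$ and $z\neq0$).

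Next I would split into the cases of the Proposition.

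\emph{Case (a): $v_p(z)=0$, or more generally not $0<v_p(z)$.} Both multiplicities vanish, so the left side is $0$, and I must check $v_p(z)\le v_p(x)$. If $v_p(z)=0$ this is immediate.

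\emph{Case (b): $v_p(z)>0$ and $A=v_p(x)$.} Here $n_1=0$ and $n_2=v_p(z)-v_p(x)\ge0$, which is the claim.

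\emph{Case (c): $v_p(z)>0$ and $A=v_p(h)$.} Here
\[
2n_1+n_2=2(v_p(x)-v_p(h))+v_p(h)=2v_p(x)-v_p(h)=v_p(z)-v_p(x)
\]
by the key relation, and this is nonnegative because $v_p(h)\le v_p(x)$.

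\emph{Case (d): $v_p(z)>0$ and $A=v_p(z)<\min\{v_p(x),v_p(h)\}$.} The left side is $0$ and $v_p(z)-v_p(x)<0$, so the formula holds. The cases overlap only where the formulas agree; for example, $A=v_p(x)=v_p(h)$ gives $v_p(z)=2v_p(x)$ and both expressions equal $v_p(x)$.

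For the equivalence, write $n_1=n_p(\mathcal{D}_1,\widetilde{P})$ and $n_2=n_p(\mathcal{D}_2,\widetilde{P})$. Since $n_1,n_2\in\mathbb{N}$, the identity $2n_1+n_2=1$ holds if and only if $(n_1,n_2)=(0,1)$. The identity just proved shows that $\max\{v_p(z)-v_p(x),0\}=1$ is equivalent to $v_p(z)-v_p(x)=1$. Hence for each $p\notin S$, the condition $(n_1,n_2)\in\mathfrak{M}=\mathbb{N}^2\setminus\{(0,1)\}$ is equivalent to $v_p(z)-v_p(x)\neq1$. Quantifying over all $p\notin S$ gives (1)$\Leftrightarrow$(2).

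The only delicate point I expect is the bookkeeping in the ``otherwise'' branches of the Proposition. I must make sure each branch giving $2n_1+n_2=0$ really has $v_p(z)\le v_p(x)$. This is clear when $v_p(z)=0$, and when $A=v_p(z)$ with $v_p(z)>0$ it follows from $v_p(z)\le v_p(x)$. The remaining ingredient is the bound $v_p(h)\le v_p(x)$ in case (c), which is exactly $A=v_p(h)$.
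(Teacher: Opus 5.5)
Your proposal is correct and is the ``direct calculation'' the paper has in mind. You substitute the Proposition's formulas case by case, use $3v_p(x)=v_p(h)+v_p(z)$, and observe that $2n_1+n_2=1$ with $n_1,n_2\in\mathbb{N}$ forces $(n_1,n_2)=(0,1)$.

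Delete one aside, since it is false, although nothing depends on it: coprimality of the coordinates does \emph{not} give $A=0$. For example, with $k=1$, $p=5$ and $(x,y_1,y_2,y_3,y_4,z)=(5,1,2,0,0,25)$ one gets $A=1$, which is exactly why the Proposition keeps track of $A$. Your case split by which of $v_p(x),v_p(h),v_p(z)$ attains $A$ never uses $A=0$, so the rest of the argument stands.
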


The height function in this paper
\[
H(P)=\max\{|x|,\sqrt{y_1^2+\cdots+y_{4k}^2},|z|\}
\]
is associated to the line bundle $\mathcal{O}(1)$.
Hence, we have
\[
N(B)=\#\{P\in(\widetilde{\mathcal{X}_k},\mathcal{M})(\mathcal{O}_S)\mid H_{\pi^{*}\mathcal{O}(1)}(P)\leq B\}.
\]

If \cite[Conjecture~1.4]{Moe26a} is true, there is $c>0$ such that the asymptotic formula
\[
N(B)\sim cB^{a((\widetilde{X_k},M),\pi^{*}\mathcal{O}(1))}(\log B)^{b(\mathbb{Q},(\widetilde{X_k},M),\pi^{*}\mathcal{O}(1))-1}\quad(B\to\infty)
\]
holds, where $a((\widetilde{X_k},M),\pi^{*}\mathcal{O}(1))$
and $b(\mathbb{Q},(\widetilde{X_k},M),\pi^{*}\mathcal{O}(1))$
are the $a$-invariant and the $b$-invariant, respectively, defined in \cite[Definition~4.37]{Moe26a}.

\begin{prop}
We have $a((\widetilde{X_k},M),\pi^{*}\mathcal{O}(1))=4k-1$ and $b(\mathbb{Q},(\widetilde{X_k},M),\pi^{*}\mathcal{O}(1))=2$.
In particular, Theorem~\ref{main} agrees with \cite[Conjecture~1.4]{Moe26a} with respect to the $a$-invariant and the $b$-invariant.
\end{prop}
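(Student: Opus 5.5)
The plan is to compute both invariants directly from their definitions in \cite[Definition~4.37]{Moe26a}, using the explicit local description of $\widetilde{\mathcal X_k}$ above. Three inputs are needed: $\Pic(\widetilde{X_k})$, the canonical class $K_{\widetilde{X_k}}$, and the effective cone $\Eff^1(\widetilde{X_k})$. Then $a$ and $b$ are read off from the position of the adjoint divisor. The step I expect to be hardest is the computation of the effective cone (third step below).

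\emph{The boundary divisor $D_M$.} I first determine the divisor attached to $M=((D_1,D_2),\mathfrak M)$ with $\mathfrak M=\mathbb N^2\setminus\{(0,1)\}$. At a point meeting only $D_i$ generically, the multiplicity vectors are $(n,0)$ for $D_1$ and $(0,n)$ for $D_2$. Since $(1,0)\in\mathfrak M$, the smallest allowed positive multiplicity along $D_1$ is $1$. Along $D_2$, $(0,1)\notin\mathfrak M$ but $(0,2)\in\mathfrak M$, so the smallest allowed multiplicity is $2$. With Moerman's convention (weight $1-1/m_i$, $m_i$ the minimal allowed multiplicity), I expect
\[
D_M=\tfrac12 D_2 .
\]
This is consistent with the identity $2n_p(\mathcal D_1,\widetilde P)+n_p(\mathcal D_2,\widetilde P)=\max\{v_p(z)-v_p(x),0\}$ from the Corollary: only $D_2$ carries a genuine restriction.

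\emph{Picard group and canonical class.} Next I compute $\Pic(\widetilde{X_k})\otimes\mathbb Q$. I expect rank $3$, spanned by $H=\pi^*\mathcal O(1)$ and the two exceptional divisors of the blow-ups along $\mathcal P$ and $\mathcal Q$. One of these is $D_2$; call the other $E$ (I expect it to relate to the strict transform of $\{x=0\}$). I write
\[
\operatorname{div}(z)=D_1+\alpha E+\beta D_2,\qquad \operatorname{div}(x)=\widetilde{\{x=0\}}+\gamma E+\delta D_2,
\]
reading the integers off the chart $y_{4k}\neq0$ with coordinates $(u:v:w)$ and equation $xu^2=vw$. Since $X_k$ is a cubic in $\mathbb P^{4k+1}$, adjunction gives $K_{X_k}=-(4k-1)H$. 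I then compute the discrepancies $K_{\widetilde{X_k}}=-(4k-1)H+c_1E+c_2D_2$ from the orders of vanishing of a residue form $\omega$ in the chart (codimensions of the blown-up centres together with the multiplicity of $X_k$ along them).

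\emph{Effective cone.} The key claim is that $\Eff^1(\widetilde{X_k})$ is the simplicial cone generated by $D_1$, $D_2$ and one further divisor, namely $E$ or the strict transform of $\{x=0\}$. To prove this I would:
\begin{itemize}
\item exhibit these three divisors as effective, and check they span $N^1$;
\item produce dual moving curves: rulings through the vertex $\mathcal P$, the conic-type fibres of the projection $(u:v:w)$, and lines in the fibres over $\{h=0\}$.
\end{itemize}
Each moving curve should pair non-negatively with every effective divisor and be orthogonal to exactly two of the three generators. This duality argument is where I expect the main difficulty; if it fails, I would fall back on the torsor or conic-bundle structure $xu^2=vw$ to compute the cone.

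\emph{Reading off $a$ and $b$.} Finally I write
\[
tH+K_{\widetilde{X_k}}+\tfrac12D_2=(t-(4k-1))H+c_1E+\bigl(c_2+\tfrac12\bigr)D_2
\]
in the basis of cone generators. I would show that $t=4k-1$ is the threshold at which this class lies on the boundary of $\overline{\Eff}^1(\widetilde{X_k})$, giving $a=4k-1$. At $t=4k-1$ I expect the class to be a nonnegative combination supported on exactly one generator, so that it lies on a face of codimension $2$ in a rank-$3$ cone. I would also check that no extra contribution to Moerman's $b$-invariant arises from the $\mathcal M$-structure, namely from components of $D_M$ with weight exactly $1$. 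This gives $b=2$. Comparison with Theorem~\ref{main}, whose main term is $B^{4k-1}\log B=B^{a}(\log B)^{b-1}$, then completes the proof.
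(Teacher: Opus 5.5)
There is a genuine gap. It sits in your Picard-group step and propagates through the rest of the argument.

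Set $x=0$ in $xu^2=vw$. This shows that the fibre of $\widetilde{X_k}\to X_k$ over a point of $\mathcal{Q}$ is the line pair $vw=0$ in $\mathbb{P}^2_{(u:v:w)}$. So the blow-up along $\mathcal{Q}$ produces \emph{two} exceptional divisors, $E_2=D_2=\{x=h=z=w=0\}$ and $E_3=\{x=h=z=v=0\}$. These come in addition to $E_1$ over the point $\mathcal{P}$. Note that $E_1$ is unrelated to the strict transform of $\{x=0\}$; that strict transform is $D_1+G$, where $G$ is the strict transform of $\{x=h=0\}$. Since $\mathrm{Cl}(X_k)\otimes\mathbb{Q}$ is spanned by $\{x=z=0\}$ (three times which is $\{z=0\}$), $N^1(\widetilde{X_k})$ has rank $4$, with basis $\pi^*\mathcal{O}(1),E_1,E_2,E_3$. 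In the charts one finds, for instance, $\operatorname{div}(z)=3D_1+2E_2+E_3$ and $\operatorname{div}(x)=D_1+E_1+E_2+E_3+G$. So your key claim that $\Eff^1(\widetilde{X_k})$ is simplicial on $D_1$, $D_2$ and one further divisor is false, and the moving-curve duality check cannot go through as planned.

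This breaks your derivation of $b$. The discrepancy computation you describe gives $K_{\widetilde{X_k}}=(1-4k)\pi^*\mathcal{O}(1)+(4k-2)E_1$, because the blow-up along $\mathcal{Q}$ is crepant. So at $t=4k-1$ the adjoint class is $(4k-2)E_1+\tfrac12E_2$, a positive combination of two independent effective classes. Any face containing it must contain both $E_1$ and $E_2$. In your rank-$3$ picture this forces codimension at most $1$, i.e.\ $b\le 1$, which contradicts the $\log B$ in Theorem~\ref{main}. Hence ``supported on exactly one generator'' cannot happen.

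The correct count is that the minimal supported face is the one spanned by $E_1,E_2$, of codimension $4-2=2$. The paper certifies that this face is supported by a nef curve: the strict transform of a general complete-intersection curve on the output of an MMP contracting exactly $E_1$ and $E_2$.

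You also misidentify the $\mathcal{M}$-specific input. There are no weight-one components. What must be ruled out is a contribution from the generator $(1,1)$ of $\Gamma_M=\{(1,0),(0,2),(0,3),(1,1)\}$, i.e.\ from points on $D_1\cap D_2$. The paper does this with \cite[Lemma~4.39]{Moe26a}: $(1,1)$ does not lie on $\partial P_{M,(1,1)}$. This reduces to the geometric Campana datum $\mathfrak{M}_c$, where \cite[Proposition~6.10]{Moe26a} identifies $b$ with the face codimension.

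Finally, for $a$ no cone computation is needed. The class $(4k-2)E_1+\tfrac12E_2$ is effective and $\pi$-exceptional, hence not big, and that already gives $a=4k-1$.
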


\begin{proof}
First, we compute the $a$-invariant $a((\widetilde{X_k},M),\pi^{*}\mathcal{O}(1))$.
Let $E_1$, $E_2$, and $E_3$ be the exceptional divisors of the resolution $\pi\colon\widetilde{X_k}\to X_k$, where $E_1$ corresponds to $(0:0:\cdots:0:1)$ and $E_2=D_2$.
As in \cite[\S~2]{LWZ19}, we have
\[
K_{\widetilde{X_k}}=(1-4k)\pi^{*}\mathcal{O}(1)+(4k-2)E_1.
\]
The pair $(\widetilde{X_k},M)$ is quasi-Campana (see \cite[Definition 6.1]{Moe26a}) for the $\mathbb{Q}$-divisor $(1/2)E_2$.
By \cite[Proposition~6.3, Proposition~6.6]{Moe26a}, we have
\[
a((\widetilde{X_k},M),\pi^{*}\mathcal{O}(1))=\inf\left\{t\in\mathbb{R}\middle|(t-4k+1)\pi^{*}\mathcal{O}(1)+(4k-2)E_1+\frac{1}{2}E_2\in\overline{\Eff}^1(\widetilde{X_k})\right\}.
\]
The divisor $(4k-2)E_1+(1/2)E_2$ is effective but not big.
Therefore, we obtain
\[
a((\widetilde{X_k},M),\pi^{*}\mathcal{O}(1))=4k-1.
\]
Next, we compute the $b$-invariant $b(\mathbb{Q},(\widetilde{X_k},M),\pi^{*}\mathcal{O}(1))$.
The generators (see \cite[Definition 4.1]{Moe26a}) of $M$ are
\[
\Gamma_M=\{(0,2),(0,3),(1,0),(1,1)\}.
\]
The convex hull corresponding to $(1,1)\in\Gamma_M$ (see \cite[Lemma~4.39]{Moe26a}) is
\[
P_{M,(1,1)}=\{(x,y)\in\mathbb{R}^2\mid\min\{x,y,2x+y-2\}\geq0\}.
\]
Let $\partial P_{M,(1,1)}$ be the boundary of $P_{M,(1,1)}$.
Since
\[
\Gamma_M\cap\partial P_{M,(1,1)}=\{(1,0),(0,2),(0,3)\},
\]
it follows from \cite[Lemma~4.39]{Moe26a} that
\[
b(\mathbb{Q},(\widetilde{X_k},M),\pi^{*}\mathcal{O}(1))=b(\mathbb{Q},(\widetilde{X_k},M'),\pi^{*}\mathcal{O}(1)),
\]
where
\[
\mathfrak{M}'=\{(0,0),(0,2),(0,3),(1,0)\},\quad M'=(\widetilde{X_k},((D_1,D_2),\mathfrak{M}')).
\]
Let
\[
\mathfrak{M}_c=\{(m_1,m_2)\in\mathbb{N}^2\mid m_2\neq1\},\quad M_c=((\widetilde{X_k},((D_1,D_2),\mathfrak{M}_c)).
\]
Then the pair $(\widetilde{X_k},M_c)$ corresponds to geometric Campana points.
By \cite[Lemma 4.39]{Moe26a}, we have
\[
b(\mathbb{Q},(\widetilde{X_k},M_c),\pi^{*}\mathcal{O}(1))=b(\mathbb{Q},(\widetilde{X_k},M'),\pi^{*}\mathcal{O}(1)).
\]
Hence, we have
\[
b(\mathbb{Q},(\widetilde{X_k},M),\pi^{*}\mathcal{O}(1))=b(\mathbb{Q},(\widetilde{X_k},M_c),\pi^{*}\mathcal{O}(1)).
\]
By \cite[Proposition 6.10]{Moe26a}, $b(\mathbb{Q},(\widetilde{X_k},M'),\pi^{*}\mathcal{O}(1))$ is the codimension of the minimal supported face of $\Eff^1(\widetilde{X_k})$ containing $(4k-2)E_1+(1/2)E_2$.
Since the two divisors $E_1$ and $E_2$ are contracted by the same birational morphism, they are independent in $N^1(\widetilde{X_k})$.
Since $-\pi^{*}K_{\widetilde{X_k}}$ is big and nef, and $(\widetilde{X_k},(1/2)E_2)$ is klt, we can run the $(-\pi^{*}K_{X_k}+K_{\widetilde{X_k}}+(1/2)E_2)$-MMP $\phi\colon\widetilde{X_k}\to Y$ by \cite{BCHM10}.
This contracts only $E_1$ and $E_2$.
Let $B\subset Y$ be the indeterminacy locus of $\phi^{-1}$.
Since $\phi$ is a birational contraction, we have $\codim(B)\geq2$.
Let $C\subset Y$ be a curve not intersecting $B$ which is given by the intersection of hyperplanes of $Y$, and let $\widetilde C$ be the strict transform of $C$.
Then $\widetilde{C}$ is nef and $\{\widetilde{C}=0\}\cap\overline{\Eff}^1(\widetilde{X_k})$ is a supported face codimension two containing $(4k-2)E_1+(1/2)E_2$.
Therefore, we obtain $b(\mathbb{Q},(\widetilde{X_k},M),\pi^{*}\mathcal{O}(1))=2$.
\end{proof}

\section{Proof of main theorem}
Let $k$ be a positive integer and let $X_k$ be the closed subscheme of $\mathbb{P}_{\mathbb{Q}}^{4k+1}$ defined by the equation
\[
x^3-(y_1^2+\cdots+y_{4k}^2)z=0.
\]
For a rational point $P=(x:y_1:\dots:y_{4k}:z)\in X_k(\mathbb{Q})$, we define the height $H(P)$ by
\[
H(P)=\max\{|x|,\sqrt{y_1^2+\cdots+y_{4k}^2},|z|\},
\]
where $x,y_1,\dots,y_{4k},z$ are coprime integers.
In this section, we fix a finite set $S$ of primes and establish an asymptotic formula for the counting function of semi-integral points on $X_k$
\[
N(B)=\#\left\{P=(x:y_1:\dots:y_{4k}:z)\in X_k(\mathbb{Q})\middle|
\begin{array}{l}
x\neq0,\\
H(P)\leq B,\\
\text{for all } p\notin S,\,v_p(z)-v_p(x)\neq1
\end{array}\right\},
\]
using a method similar to that of \cite{LWZ19}.

We set
\[
N^{*}(B)=\#\left\{(x,y_1,\dots,y_{4k},z)\in\mathbb{Z}^{4k+2}\middle|
\begin{array}{l}
x^3-(y_1^2+\cdots+y_{4k}^2)z=0,\\
x\neq0,\\
\max\{|x|,\sqrt{y_1^2+\cdots+y_{4k}^2},|z|\}\leq B,\\
\text{for all\,\,}p\notin S,v_p(z)-v_p(x)\neq1
\end{array}\right\}.
\]

By the inclusion-exclusion principle, we have
\[
N(B)=\sum_{d=1}^\infty\mu(d)N^{*}\left(\frac{B}{d}\right),
\]
where $\mu$ is the M\"obius function.
We calculate $N^{*}$ by fixing the values of
\[
n:=x \quad \text{and} \quad d:=y_1^2+\cdots+y_{4k}^2.
\]
For a positive integer $d$, we define
\[
r_{4k}(d)=\#\{(y_1,\dots,y_{4k})\in\mathbb{Z}^{4k}\mid y_1^2+\cdots+y_{4k}^2=d\}.
\]
We define a map $\mathbf{1}_S\colon\mathbb{Q}^\times\to\{0,1\}$ by
\[
\mathbf{1}_S(x)=\left\{\begin{array}{ll}
1&\text{if } v_p(x)\neq1 \text{ for all } p\notin S,
\\
0&\text{otherwise.}
\end{array}\right.
\]
Considering the sign of $x$, we obtain
\[
N^{*}(B)=2\sum_{n\leq B}\sum_{\substack{n^3/B<d\leq B^2\\ d\mid n^3}}r_{4k}(d)\mathbf{1}_S\left(\frac{n^2}{d}\right)=2\sum_{n\leq B}\left(\sum_{\substack{d\leq B^2\\ d\mid n^3}}-\sum_{\substack{d\leq n^3/B\\ d\mid n^3}}\right)r_{4k}(d)\mathbf{1}_S\left(\frac{n^2}{d}\right).
\]
Here we used the identity
\[
v_p(z)-v_p(x)=v_p(n^2/d)
\]
for $x\neq0$.
We define a multiplicative function $r_{4k}^{*}\colon\mathbb{Z}_{\geq1}\to\mathbb{R}$ such that
\[
r_{4k}^{*}(p^l)=\left\{
\begin{array}{ll}
\displaystyle\left(1-\frac{(-1)^k}{1-2^{2k-1}}\right)2^{l(2k-1)}-(-1)^k\frac{1-2^{2k}}{1-2^{2k-1}}&\text{if }p=2\text{ and }l\geq1,
\\
\displaystyle\frac{1-p^{(l+1)(2k-1)}}{1-p^{2k-1}}&\text{otherwise}
\end{array}\right.
\]
for all primes $p$ and for all non-negative integers $l$.
As described in \cite[(2.2)]{LWZ19}, we have
\[
r_{4k}(d)=\frac{4k}{(4^k-1)|B_{2k}|}r_{4k}^{*}(d)+O(d^k)\quad(d\to\infty),
\]
where $B_{2k}$ is the $2k$-th Bernoulli number.
We note that the error term vanishes when $k=1$ (cf.\ \cite[(3.9)]{Gro85}).
By a similar argument to that in \cite[page 3]{Tak22}, we have
\[
\sum_{n\leq B}\left(\sum_{\substack{d\leq B^2\\ d\mid n^3}}-\sum_{\substack{d\leq n^3/B\\ d\mid n^3}}\right)O(d^k)=O(B^{2k+1}(\log B)^3)=O(B^{4k-3/2}(\log B)^3)\quad(B\to\infty)
\]
when $k\geq2$.
For $X$, $Y$, $B\in\mathbb{R}_{>0}$, we define
\begin{align*}
S(X,Y)&=\sum_{n\leq X}\sum_{\substack{d\leq Y\\ d\mid n^3}}r_{4k}^{*}(d)\mathbf{1}_S\left(\frac{n^2}{d}\right),
\\
T(B)&=\sum_{n\leq B}\sum_{\substack{d\leq n^3/B\\ d\mid n^3}}r_{4k}^{*}(d)\mathbf{1}_S\left(\frac{n^2}{d}\right).
\end{align*}
We have
\[
N^{*}(B)=\frac{8k}{(4^k-1)|B_{2k}|}(S(B,B^2)-T(B))+O(B^{4k-3/2}(\log B)^3)\quad(B\to\infty).
\]
To evaluate $T$ using $S$, we introduce an auxiliary function $\phi\colon\mathbb{R}_{>0}\to\mathbb{R}$ satisfying the following conditions.

\begin{ass}\label{3ass}
\begin{enumerate}
\item $\phi$ is strictly increasing,
\item $\lim_{B\to\infty}\phi(B)=\infty$,
\item $\phi(B)=O(\sqrt{\log B})\quad(B\to\infty)$,
\item $\phi(1)\geq2$.
\end{enumerate}
\end{ass}

\begin{nota}
In this section, we use the following notation.
\begin{enumerate}
\item $\varepsilon=32^{-1}$,
\item let $B$ be a real number greater than $e^{32}$,
\item $\delta=1-\phi(B)^{-1}$,
\item let $k_0$ be a positive integer such that $\delta^{k_0}<B^{-\varepsilon}\leq\delta^{k_0-1}$,
\item $\kappa=1+(\log B)^{-1}$,
\item $\lambda=2k-1+(\log B)^{-1}$.
\end{enumerate}
\end{nota}

The assumption $B>e^{32}$ is needed for Proposition~\ref{ER1}.

\begin{lem}\label{STS}
As in \cite[\S~6]{LWZ19}, the following inequalities hold:
\begin{enumerate}
\item $\displaystyle\sum_{l=1}^{k_0}S(\delta^{l-1}B,\delta^{3l}B^2)-S(\delta^lB,\delta^{3l}B^2)\leq T(B)$,
\item $\displaystyle T(B)\leq S(\delta^{k_0}B,\delta^{3k_0}B^2)+\sum_{l=1}^{k_0}S(\delta^{l-1}B,\delta^{3(l-1)}B^2)-S(\delta^lB,\delta^{3(l-1)}B^2)$.
\end{enumerate}
\end{lem}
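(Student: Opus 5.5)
The plan is to prove both inequalities by slicing the range of $n$ in $T(B)$ dyadically in powers of $\delta$, and comparing each slice with a difference of $S$-values. The key observation is that every summand $r_{4k}^{*}(d)\mathbf{1}_S(n^2/d)$ is non-negative. Indeed, $r_{4k}^{*}(p^l)>0$ for all $p,l$: for odd $p$ it is a geometric sum, and for $p=2$ a direct check of the sign suffices. Hence $S(X,Y)$ is non-decreasing in both $X$ and $Y$. Moreover, for $X'<X$ the difference $S(X,Y)-S(X',Y)$ is exactly the sum over $X'<n\le X$ and $d\le Y$ with $d\mid n^3$.

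First I split the range $n\le B$ into the intervals $I_l=(\delta^lB,\delta^{l-1}B]$ for $1\le l\le k_0$, together with the remaining range $n\le\delta^{k_0}B$. For $n\in I_l$ we have $\delta^{3l}B^2<n^3/B\le\delta^{3(l-1)}B^2$.

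For the lower bound (1), I restrict the sum defining $T(B)$ to $n\in\bigcup_l I_l$, discarding the small-$n$ range by positivity. On each $I_l$ I shrink the condition $d\le n^3/B$ to $d\le\delta^{3l}B^2$. By positivity this can only decrease the sum. The resulting slice is exactly $S(\delta^{l-1}B,\delta^{3l}B^2)-S(\delta^lB,\delta^{3l}B^2)$, and summing over $l$ gives (1).

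For the upper bound (2), on each $I_l$ I enlarge the condition to $d\le\delta^{3(l-1)}B^2$, which produces the terms $S(\delta^{l-1}B,\delta^{3(l-1)}B^2)-S(\delta^lB,\delta^{3(l-1)}B^2)$. For $n\le\delta^{k_0}B$ we have $n^3/B\le\delta^{3k_0}B^2$, so that part is bounded by $S(\delta^{k_0}B,\delta^{3k_0}B^2)$. Adding these bounds gives (2).

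The only point that needs any care is the positivity of $r_{4k}^{*}(2^l)$. This quantity equals $(1-c)2^{l(2k-1)}+c'$ with $c=(-1)^k/(1-2^{2k-1})$, where $|c|<1$. Since $2^{l(2k-1)}\ge2^{2k-1}$, one checks that it dominates the constant term. I expect this verification, rather than the slicing argument, to be the main obstacle. Alternatively, one can argue from the asymptotic relation $r_{4k}^{*}(d)\asymp r_{4k}(d)$ at prime powers of $2$, as in \cite{LWZ19}.
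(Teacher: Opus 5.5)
Your argument is correct and is the standard slicing of $n\le B$ into $(\delta^lB,\delta^{l-1}B]$, on which $\delta^{3l}B^2<n^3/B\le\delta^{3(l-1)}B^2$; the paper itself gives no proof and simply takes this argument over from \cite[\S~6]{LWZ19}, with non-negativity of the summands as the only input. One small correction to your positivity check: for $k=1$ you have $c=1$, not $|c|<1$, so $1-c=0$ and $r_4^{*}(2^l)=3>0$ comes from the constant term alone. In general, writing $q=2^{2k-1}$, one gets $r_{4k}^{*}(2^l)=\bigl((q-2)q^l+2q-1\bigr)/(q-1)$ for odd $k$ and $(q^{l+1}-2q+1)/(q-1)$ for even $k$, and both are positive for $l\ge1$.
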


First, we evaluate the terms corresponding to $l=k_0$.
As in \cite[\S~6]{LWZ19}, $S(x,y)\ll xy(\log x)^{15}$ holds.
Hence, we have the following lemma.

\begin{lem}\label{S}
With the above notation, we have the following estimates:
\begin{enumerate}
\item $S(\delta^{k_0}B,\delta^{3k_0}B^2)\ll 
B^{3-4\varepsilon}(\log B)^{30}$,
\item $S(\delta^{k_0-1}B,\delta^{3k_0}B^2)-S(\delta^{k_0}B,\delta^{3k_0}B^2)\ll 
B^{3-4\varepsilon}(\log B)^{30}$,
\item $S(\delta^{k_0-1}B,\delta^{3(k_0-1)}B^2)-S(\delta^{k_0}B,\delta^{3(k_0-1)}B^2)\ll 
B^{3-4\varepsilon}(\log B)^{30}$.
\end{enumerate}
\end{lem}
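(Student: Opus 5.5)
The plan is to combine the crude bound $S(x,y)\ll xy(\log x)^{15}$ (quoted above from \cite[\S~6]{LWZ19}) with the definition of $k_0$. All three quantities will be bounded by single values of $S$ at arguments of size at most $\delta^{k_0-1}B$ and $\delta^{3(k_0-1)}B^2$, so the key input is a lower bound on $\delta^{k_0}$.

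First we control $\delta^{k_0}$ and $\delta^{k_0-1}$. By definition, $\delta^{k_0}<B^{-\varepsilon}\le\delta^{k_0-1}$. Since $\delta=1-\phi(B)^{-1}\ge 1/2$ by Assumption~\ref{3ass}(1),(4), we get
\[
\tfrac12 B^{-\varepsilon}\le \delta^{k_0}<B^{-\varepsilon}, \qquad B^{-\varepsilon}\le\delta^{k_0-1}\le 2B^{-\varepsilon}.
\]
In the other direction, $\delta^{k_0-1}\le 1$ trivially. What we actually need is an upper bound on $\delta^{k_0-1}$, and this gives $\delta^{k_0-1}=\delta^{k_0}/\delta<2B^{-\varepsilon}$. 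So every power $\delta^{k_0}$ and $\delta^{k_0-1}$ lies in $[\tfrac12B^{-\varepsilon},2B^{-\varepsilon}]$.

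Next we treat the three items in turn.
\begin{enumerate}
\item Apply $S(x,y)\ll xy(\log x)^{15}$ with $x=\delta^{k_0}B\le B^{1-\varepsilon}$ and $y=\delta^{3k_0}B^2\le B^{2-3\varepsilon}$. This gives $\ll B^{3-4\varepsilon}(\log B)^{15}$, which is stronger than claimed.
\item $S(X,Y)$ is a sum of nonnegative terms, since $r_{4k}^*\ge0$. This is clear for odd $p$; for $p=2$ it is checked from the formula, using growth in $l$. Hence $S$ is monotone in $X$, so the difference is nonnegative and at most $S(\delta^{k_0-1}B,\delta^{3k_0}B^2)$. Applying the bound with $x\le 2B^{1-\varepsilon}$ and $y\le B^{2-3\varepsilon}$ again gives $\ll B^{3-4\varepsilon}(\log B)^{15}$.
\item Here the difference is at most $S(\delta^{k_0-1}B,\delta^{3(k_0-1)}B^2)$. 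With $x\le 2B^{1-\varepsilon}$ and $y\le 8B^{2-3\varepsilon}$, this is again $\ll B^{3-4\varepsilon}(\log B)^{15}$.
\end{enumerate}
The stated exponent $30$ is weaker than what this gives, so the extra room absorbs any loss, for instance if the crude bound is applied with $\log$ of a larger argument, or is used in the form $(\log xy)^{15}$.

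The main point needing care is positivity of $r_{4k}^*$ at $p=2$, which is what justifies dropping the subtracted term. If that failed, we would instead bound each term separately in absolute value: the version of $S(x,y)\ll xy(\log x)^{15}$ from \cite{LWZ19} comes from $|r_{4k}^*(d)|\ll d^{2k-1}\tau(d)$-type estimates and so holds for absolute values as well. Applying it to both terms gives the same bound. One must also note that $x\ge\tfrac12B^{1-\varepsilon}\ge 2$ for $B>e^{32}$, so that $\log x\asymp\log B$ and the crude bound is applicable.
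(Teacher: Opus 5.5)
Your argument is the paper's own: insert $\delta^{k_0}<B^{-\varepsilon}\le\delta^{k_0-1}$ (with $\delta\ge\frac12$) into the crude estimate for $S$. The paper's exponent $30$ comes only from a lossy treatment of $(k_0\log\delta+\log B)^{15}$ via $k_0=O((\log B)^2)$, so your $(\log B)^{15}$ is fine. Your monotonicity step for (2) and (3) correctly fills in the paper's ``similar''; it uses $r^*_{4k}>0$, exactly as the paper does in Lemma~\ref{EMS}.

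There is, however, a genuine problem that you inherit from the paper: the quoted bound $S(x,y)\ll xy(\log x)^{15}$ is only correct for $k=1$. For odd $p$ one has $r^*_{4k}(p^l)=1+p^{2k-1}+\dots+p^{l(2k-1)}$. So the estimate $r^*_{4k}(d)\ll d^{2k-1}\tau(d)$, which you yourself invoke, yields $S(x,y)\ll xy^{2k-1}(\log x)^{15}$, not $xy(\log x)^{15}$.

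In fact, for $k\ge2$ the three estimates are false as stated. In item (1), take the diagonal terms $d=n^2$ with $n$ odd and $n\le\sqrt y$; note that $\sqrt y\le x$ for large $B$, that $\mathbf{1}_S(1)=1$, and that $r^*_{4k}(n^2)\ge n^{4k-2}$. These terms alone give $S(x,y)\gg y^{(4k-1)/2}\gg B^{(4k-1)(1-3\varepsilon/2)}$, which is far above $B^{3-4\varepsilon}(\log B)^{30}$. The quoted bound also contradicts Proposition~\ref{SS}, which gives $S(B,B^2)\asymp B^{4k-1}\log B$.

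The repair is painless. Run your argument verbatim with $xy^{2k-1}$ in place of $xy$, using $x\le 2B^{1-\varepsilon}$ and $y\le 8B^{2-3\varepsilon}$. This gives $B^{4k-1-(6k-2)\varepsilon}(\log B)^{15}\ll B^{4k-1-4\varepsilon}(\log B)^{15}$ in all three items. That is all Corollary~\ref{Cor} and the subsequent evaluation of $T(B)$ need, since the error is absorbed into $O(B^{4k-1}\phi(B))$. So you should state and prove this corrected bound; for $k=1$ your proof is correct as written.
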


\begin{proof}
We note that $k_0=O(\phi(B)\log B)=O((\log B)^2)$.
Hence,
\[
S(\delta^{k_0}B,\delta^{3k_0}B^2)\ll\delta^{4k_0}B^3(k_0\log\delta+\log B)^{15}\ll B^{3-4\varepsilon}(\log B)^{30}.
\]
This proves (1).
The proofs of (2) and (3) are similar.
\end{proof}
By Lemma~\ref{STS} and Lemma~\ref{S}, we obtain the inqualities
\begin{align*}
\sum_{l=1}^{k_0-1}S(\delta^{l-1}B,\delta^{3l}B^2)-S(\delta^lB,\delta^{3l}B^2)+O(B^{3-4\varepsilon}(\log B)^{30})&\leq T(B),
\\
\sum_{l=1}^{k_0-1}S(\delta^{l-1}B,\delta^{3(l-1)}B^2)-S(\delta^lB,\delta^{3(l-1)}B^2)+O(B^{3-4\varepsilon}(\log B)^{30})&\geq T(B).
\end{align*}
Next, to evaluate the remaining terms, we define
\[
M(X,Y)=\int_0^X\int_0^YS(x,y)\,\mathrm{d}x\mathrm{d}y.
\]

As in \cite{dlB98}, we evaluate $S$ using $M$.
To simplify the notation, we introduce the following notation.

\begin{nota}\label{nota}
Let $f\colon\mathbb{R}_{>0}^2\to\mathbb{R}$ be a function.
\begin{enumerate}
\item We define a function $\mathscr{D}f\colon\mathbb{R}_{>0}^4\to\mathbb{R}$ by
\[
\mathscr{D}f(X_1,X_2;Y_1,Y_2)=f(X_1,Y_1)+f(X_2,Y_2)-f(X_1,Y_2)-f(X_2,Y_1).
\]
\item We define functions $E^{+}f, E^{-1}f\colon\mathbb{R}_{>0}^2\to\mathbb{R}$ by
\[
E^{\pm}f(X,Y)=\frac{\mathscr{D}f(X,X\pm X^{1-\varepsilon};Y,Y\pm Y^{1-\varepsilon})}{X^{1-\varepsilon}Y^{1-\varepsilon}}.
\]
\item For each $i=1,2,3,4$, we define a function $E_if\colon\mathbb{R}_{>1}\to\mathbb{R}$ by
\begin{align*}
E_1f(B)&=\sum_{l=1}^{k_0-1}E^{+}f(\delta^{l-1}B,\delta^{3(l-1)}B^2),
\\
E_2f(B)&=\sum_{l=1}^{k_0-1}E^{-}f(\delta^{l}B,\delta^{3(l-1)}B^2),
\\
E_3f(B)&=\sum_{l=1}^{k_0-1}E^{+}f(\delta^{l}B,\delta^{3l}B^2),
\\
E_4f(B)&=\sum_{l=1}^{k_0-1}E^{-}f(\delta^{l-1}B,\delta^{3l}B^2).
\end{align*}
\end{enumerate}
\end{nota}

As in \cite[Lemma~2]{dlB98}, we have the following lemma.

\begin{lem}\label{EMS}
For $X$, $Y>0$, the inequality
\[
E^{-}M(X,Y)\leq S(X,Y)\leq E^{+}M(X,Y)
\]
holds.
\end{lem}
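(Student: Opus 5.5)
The plan is to exploit the monotonicity of $S$ in each variable, together with the fact that $\mathscr{D}M$ is a two-dimensional integral of $S$ over a rectangle. Since $r_{4k}^{*}(d)\geq0$ for all $d$ and $\mathbf{1}_S\geq0$, every summand defining $S(x,y)$ is non-negative. Enlarging either $x$ or $y$ only adds summands, so $S$ is non-decreasing in $x$ for fixed $y$ and in $y$ for fixed $x$. The positivity of $r_{4k}^{*}$ is clear for odd primes, where $r_{4k}^{*}(p^l)=1+p^{2k-1}+\cdots+p^{l(2k-1)}$. At $p=2$ it needs a short check from the explicit formula: for $l\geq1$ the value is a positive combination dominated by $2^{l(2k-1)}$. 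If this check becomes awkward, it also follows from $r_{4k}^{*}$ being a positive multiple of the singular series $r_{4k}(d)/d^{2k-1}$ up to normalization.

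Next I will write $\mathscr{D}M$ as an integral. Since $M(X,Y)=\int_0^X\int_0^Y S$, inclusion–exclusion over the rectangle gives
\[
\mathscr{D}M(X,X+h;Y,Y+k)=\int_X^{X+h}\int_Y^{Y+k}S(x,y)\,\mathrm{d}y\,\mathrm{d}x
\]
for $h,k>0$. For the minus version, $\mathscr{D}M(X,X-h;Y,Y-k)$ equals the integral over $[X-h,X]\times[Y-k,Y]$, because $\mathscr{D}f(X_1,X_2;Y_1,Y_2)$ is symmetric under swapping both pairs simultaneously. This holds provided $X-X^{1-\varepsilon}\geq0$ and $Y-Y^{1-\varepsilon}\geq0$, which is true for $X,Y\geq1$. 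For smaller $X$ or $Y$, $S$ vanishes anyway (since $n\geq1$ and $d\geq1$), so there I will simply note that the lower bound is trivial, or restrict to the range where the lemma is used.

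Finally I will compare with $S(X,Y)$. On $[X,X+X^{1-\varepsilon}]\times[Y,Y+Y^{1-\varepsilon}]$ monotonicity gives $S(x,y)\geq S(X,Y)$. Integrating over this rectangle of area $X^{1-\varepsilon}Y^{1-\varepsilon}$ and dividing by the area yields $E^{+}M(X,Y)\geq S(X,Y)$. On $[X-X^{1-\varepsilon},X]\times[Y-Y^{1-\varepsilon},Y]$ we have $S(x,y)\leq S(X,Y)$, so $E^{-}M(X,Y)\leq S(X,Y)$ in the same way.

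The only genuine obstacle is confirming non-negativity of $r_{4k}^{*}(2^l)$, together with the degenerate case where $X$ or $Y$ is less than $1$. The rest is the standard argument of \cite[Lemma~2]{dlB98}.
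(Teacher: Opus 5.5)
Your proposal is correct and follows the paper's argument: monotonicity of $S$ in each variable, together with the fact that $\mathscr{D}M$ is the integral of $S$ over the small rectangle. You also justify the monotonicity via non-negativity of $r_{4k}^{*}$ and treat the range $X<1$ or $Y<1$, both of which the paper passes over silently.

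One small correction to your check at $p=2$: for even $k$ the constant term of $r_{4k}^{*}(2^l)$ is negative, so it is not a positive combination. Since the coefficient of $2^{l(2k-1)}$ is positive, the value still increases in $l$, so $r_{4k}^{*}(2^l)\geq r_{4k}^{*}(2)=2^{2k-1}-1>0$ for $l\geq1$.
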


\begin{proof}
Since $S(X,Y)$ is increasing in $X$ and $Y$, we have
\begin{align*}
X^{1-\varepsilon}Y^{1-\varepsilon}S(X,Y)&=\int_X^{X+X^{1-\varepsilon}}\int_Y^{Y+Y^{1-\varepsilon}}S(X,Y)\,\mathrm{d}x\mathrm{d}y
\\
&\leq\int_X^{X+X^{1-\varepsilon}}\int_Y^{Y+Y^{1-\varepsilon}}S(x,y)\,\mathrm{d}x\mathrm{d}y
\\
&=\mathscr{D}M(X,X+X^{1-\varepsilon};Y,Y+Y^{1-\varepsilon}).
\end{align*}
This implies that $S(X,Y)\leq E^+M(X,Y)$.
We can prove $S(X,Y)\geq E^-M(X,Y)$ similarly.
\end{proof}

By Lemma \ref{STS}, Lemma \ref{S}, and Lemma \ref{EMS}, we have the following corollary.

\begin{co}\label{Cor}
The inequality
\[
E_1M(B)-E_2M(B)+O(B^{3-4\varepsilon}(\log B)^{30})\leq T(B)\leq E_3M(B)-E_4M(B)+O(B^{3-4\varepsilon}(\log B)^{30})
\]
holds.
\end{co}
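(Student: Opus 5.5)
The plan is to combine the two-sided bounds of Lemma~\ref{STS} (with the $l=k_0$ terms already removed via Lemma~\ref{S}) with the sandwich $E^{-}M\le S\le E^{+}M$ of Lemma~\ref{EMS}, applied term by term. The target pairs the points in the specific way of Notation~\ref{nota}: $E_1,E_2$ live on the grid $(\delta^{l-1}B,\delta^{3(l-1)}B^2)$, $(\delta^{l}B,\delta^{3(l-1)}B^2)$, and $E_3,E_4$ on $(\delta^{l}B,\delta^{3l}B^2)$, $(\delta^{l-1}B,\delta^{3l}B^2)$.

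\emph{Step 1: the termwise sandwich.} Applying Lemma~\ref{EMS} directly to the displayed consequences of Lemma~\ref{STS} and Lemma~\ref{S} gives
\[
E_4M(B)-E_3M(B)+O(B^{3-4\varepsilon}(\log B)^{30})\le T(B)\le E_1M(B)-E_2M(B)+O(B^{3-4\varepsilon}(\log B)^{30}).
\]
For the lower bound one uses $S(\delta^{l-1}B,\delta^{3l}B^2)\ge E^{-}M$ and $S(\delta^{l}B,\delta^{3l}B^2)\le E^{+}M$. For the upper bound one uses $S(\delta^{l-1}B,\delta^{3(l-1)}B^2)\le E^{+}M$ and $S(\delta^{l}B,\delta^{3(l-1)}B^2)\ge E^{-}M$.

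\emph{Step 2: comparing with the stated pairing.} This sandwich is not the claimed inequality $E_1M-E_2M+O(\cdot)\le T\le E_3M-E_4M+O(\cdot)$. To pass to the stated form I would need
\[
(E_1M-E_2M)-(E_4M-E_3M)\ll B^{3-4\varepsilon}(\log B)^{30}.
\]
First I would use monotonicity of $S$ in each variable (as in the proof of Lemma~\ref{EMS}) to get
\[
S(X-X^{1-\varepsilon},Y-Y^{1-\varepsilon})\le E^{-}M(X,Y)\le E^{+}M(X,Y)\le S(X+X^{1-\varepsilon},Y+Y^{1-\varepsilon}),
\]
so each $E^{\pm}M$ may be replaced by $S$ at a point shifted by relative amount $X^{-\varepsilon},Y^{-\varepsilon}$. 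Then $(E_1-E_2)-(E_4-E_3)$ becomes, up to these shifts, the sum over $l$ of the second differences $\mathscr{D}S(\delta^{l}B,\delta^{l-1}B;\delta^{3l}B^2,\delta^{3(l-1)}B^2)$. Each of these counts pairs $(n,d)$ in a box of relative side $1-\delta=\phi(B)^{-1}$. The shift errors I would bound by a short-interval version of $S(x,y)\ll xy(\log x)^{15}$, summed over $k_0\ll(\log B)^2$ values of $l$.

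\emph{Main obstacle.} The step I expect to fail is Step 2. The boxes $[\delta^lB,\delta^{l-1}B]\times[\delta^{3l}B^2,\delta^{3(l-1)}B^2]$ have total mass about $B^3\log B/\phi(B)$. That is $o(B^3\log B)$ but not $O(B^{3-4\varepsilon}(\log B)^{30})$, so with the stated error term the literal inequality does not follow from Lemmas~\ref{STS}--\ref{EMS}. I would therefore first check whether the labels $E_1,\dots,E_4$ in Notation~\ref{nota} are permuted, in which case the statement is exactly Step 1. Otherwise I would settle for the stated inequality with an error $O(B^3\log B/\phi(B))$ coming from Step 2, since this is still admissible in the final asymptotic $\asymp B^3\log B$ once all four $E_iM$ are shown to share the same main term.
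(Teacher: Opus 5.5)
Your Step 1 is exactly the paper's proof, which consists only of citing Lemma~\ref{STS}, Lemma~\ref{S} and Lemma~\ref{EMS}, and your pairing is the right one. With the definitions in Notation~\ref{nota}, the combination gives
\[
E_4M(B)-E_3M(B)+O\bigl(B^{3-4\varepsilon}(\log B)^{30}\bigr)\le T(B)\le E_1M(B)-E_2M(B)+O\bigl(B^{3-4\varepsilon}(\log B)^{30}\bigr).
\]
So the corollary as printed has the $E_i$ permuted, and your first resolution (relabel) is the correct one. The rest of the paper bears this out. In the proof of the asymptotic for $T(B)$, the paper says ``similarly'' that $E_3g-E_4g$ has main term $+\frac{\mathcal{G}_S(1,2k-1)}{18(2k-1)(3k-1)}B^{4k-1}\log B$. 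With the given definitions it is $E_4g-E_3g$ that has this main term, as the same computation the paper does for $E_1g-E_2g$ shows. After relabeling, the final formula for $T(B)$ is unchanged.

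You should drop the fallback in Step 2. The discrepancy you estimate, $(E_1M-E_2M)-(E_4M-E_3M)$, is the staircase mass, and it bears only on the printed lower bound $E_1M-E_2M+O(\cdot)\le T(B)$. For that half, an error $O(B^{4k-1}\log B/\phi(B))$ would indeed suffice.

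The printed upper bound $T(B)\le E_3M(B)-E_4M(B)+O(\cdot)$ is a different matter. It would need $E_1M-E_2M-E_3M+E_4M$ to be small, and that quantity is roughly $2T(B)$. The reason is that $E^{\pm}M(X,Y)$ agrees with $S(X,Y)$ to leading order, which is what the rest of the section establishes. Hence $E_3M-E_4M$ is essentially minus the lower staircase sum of Lemma~\ref{STS}(1), so it is negative of order $B^{4k-1}\log B$, whereas $T(B)\ge 0$.

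That half of the printed statement is therefore false with any error $o(B^{4k-1}\log B)$, including your $O(B^{4k-1}\log B/\phi(B))$. The two differences $E_1M-E_2M$ and $E_3M-E_4M$ do not share a main term; their main terms have opposite signs.
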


To evaluate $M(X,Y)$, we introduce the double Dirichlet series
\[
\mathcal{F}(s,w)=\sum_{n=1}^\infty\sum_{d\mid n^3}\frac{r_{4k}^{*}(d)}{n^sd^w}\mathbf{1}_S\left(\frac{n^2}{d}\right),
\]
where $r_{4k}^{*}$ is the multiplicative function defined above.
As in \cite[Lemma 5.2]{LWZ19}, we obtain the following proposition by applying Perron's formula \cite[Theorem II.2.3]{Ten95} twice.

\begin{prop}\label{Perron}
For $B^{1-\varepsilon}/2\leq X\leq 2B$ and $B^{2-3\varepsilon}/2\leq Y\leq 2B^2$, we have
\[
M(X,Y)=\frac{1}{(2\pi i)^2}\int_{\kappa-iB^{8k}}^{\kappa+iB^{8k}}\int_{\lambda-iB^{8k}}^{\lambda+iB^{8k}}\frac{\mathcal{F}(s,w)X^{s+1}Y^{w+1}}{s(s+1)w(w+1)}\,\mathrm{d}w\mathrm{d}s+O(1),
\]
where $\kappa=1+(\log B)^{-1}$ and $\lambda=2k-1+(\log B)^{-1}$.
\end{prop}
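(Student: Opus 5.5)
The plan is to apply the truncated Perron formula with weight $1/(s(s+1))$ twice, first in $w$ and then in $s$. The weight $1/(s(s+1))$ is the kernel of the first Cesàro mean, so write
\[
M(X,Y)=\int_0^X\int_0^Y S(x,y)\,\mathrm{d}x\,\mathrm{d}y=\sum_{n\le X}\sum_{\substack{d\le Y\\ d\mid n^3}} a(n,d)\,(X-n)(Y-d),
\]
where $a(n,d)=r_{4k}^{*}(d)\mathbf{1}_S(n^2/d)\ge0$.

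For $\Re w>0$ and $y>0$ we have, for every $T>0$,
\[
\frac{1}{2\pi i}\int_{\lambda-iT}^{\lambda+iT}\frac{y^{w+1}}{w(w+1)}\,\mathrm{d}w=(y-1)^+ +O\!\left(\frac{y^{\lambda+1}}{T}\right).
\]
This follows by shifting the contour, as in \cite[Theorem II.2.3]{Ten95}. The key point is that the integrand decays like $|w|^{-2}$, so the tail beyond height $T$ contributes $O(y^{\lambda+1}/T)$ uniformly in $y$. There is no $\log$ or $1/|\log y|$ loss, in contrast with the unweighted Perron formula. We apply this with $y=Y/d$, multiply by $d$, and get $(Y-d)^+$ up to an error $O(Y^{\lambda+1}d^{-\lambda}/T)$. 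The same holds in $s$ with $x=X/n$ and abscissa $\kappa$.

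The first main step is absolute convergence of $\mathcal F(s,w)$ on the lines $\Re s=\kappa$, $\Re w=\lambda$. The bound $r_{4k}^{*}(d)\ll d^{2k-1}\tau(d)$ holds for $d\ge1$, since at $p\neq2$ the local factor is at most $p^{l(2k-1)}(l+1)$, and $p=2$ is a bounded factor. Hence
\[
\sum_{n}\sum_{d\mid n^3}\frac{|a(n,d)|}{n^\kappa d^\lambda}\le \sum_n \frac{1}{n^{\kappa}}\sum_{d\mid n^3}\frac{\tau(d)}{d^{(\log B)^{-1}}}\le\sum_n\frac{\tau(n^3)^2}{n^{1+1/\log B}}\ll(\log B)^{C}
\]
for some absolute $C$. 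This allows swapping sums and integrals (Fubini), since the double integral over the truncated rectangles is over compact sets.

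The second main step is to expand the double integral. Substituting the Dirichlet series and integrating termwise, we obtain $\sum_{n,d}a(n,d)\,I_T(X/n)\,I_T(Y/d)\,nd$, where $I_T$ denotes the truncated one-variable integral above, with $T=B^{8k}$. Each factor $I_T$ equals the Cesàro weight plus an error. The main products give exactly $M(X,Y)$. The cross error terms are bounded by
\[
\frac{X^{\kappa+1}Y^{\lambda+1}}{T}\sum_{n,d}\frac{a(n,d)}{n^\kappa d^\lambda}+(\text{terms with one main factor}).
\]
A term with one main factor is at most $X\cdot\frac{Y^{\lambda+1}}{T}\sum a(n,d)d^{-\lambda}$ restricted to $n\le X$, which is similarly bounded.

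Using the ranges $X\le2B$ and $Y\le2B^2$, we get $X^{\kappa+1}Y^{\lambda+1}\ll B^{2}\cdot B^{4k}$ up to constants, since $B^{1/\log B}=e$. Combined with the $(\log B)^C$ bound from the first step, every error is $\ll B^{4k+2}(\log B)^C/B^{8k}=O(1)$ for $k\ge1$. The lower bounds on $X$ and $Y$ are not needed beyond ensuring that $X,Y\ge1$ is harmless.

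The expected obstacle is making the truncation estimate uniform in $y$ near $1$, i.e. for $d$ close to $Y$. I would handle this exactly via the $|w|^{-2}$ decay. Deform to a large semicircle to the left when $y>1$, picking up the residues $y-1$ at $w=0$ and $w=-1$, whose combination gives $(y-1)$. Deform to the right when $y\le1$, where the integral vanishes. The arcs contribute $\ll y^{\lambda+1}/T$ without any dependence on $|\log y|$. This is the standard Cesàro form of \cite[Theorem II.2.3]{Ten95}, mirroring \cite[Lemma 5.2]{LWZ19}.
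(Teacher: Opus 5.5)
Your proposal is correct and follows the paper's route. The paper only says the result comes from applying the Cesàro-weighted Perron formula \cite[Theorem II.2.3]{Ten95} twice, as in \cite[Lemma~5.2]{LWZ19}. Your write-up supplies exactly the details that citation leaves implicit:
\begin{itemize}
\item the identity $M(X,Y)=\sum a(n,d)(X-n)^+(Y-d)^+$;
\item absolute convergence of $\mathcal{F}$ on $\Re s=\kappa$, $\Re w=\lambda$ with a $(\log B)^{C}$ bound;
\item the total truncation error $\ll X^{\kappa+1}Y^{\lambda+1}(\log B)^{C}/B^{8k}\ll B^{2-4k}(\log B)^{C}$.
\end{itemize}
The semicircle deformation near $y=1$ is unnecessary. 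The full-line integral equals $(y-1)^+$ exactly, and on $\Re w=\lambda$ the integrand is $O(y^{\lambda+1}|w|^{-2})$, so the tails beyond height $T$ are trivially $\ll y^{\lambda+1}/T$.
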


As in \cite[\S~3]{LWZ19}, we derive an expression for
$\mathcal{F}(s,w)$ in terms of zeta functions by computing
certain rational functions.
Since $r_{4k}^{*}$ is multiplicative, we have the Euler product:
\begin{align*}
\mathcal{F}(s,w)&=\prod_{p\notin S}\sum_{a=0}^\infty p^{-as}\sum_{b\in\{0,\dots,3a\}\setminus\{2a-1\}}p^{-bw}r_{4k}^{*}(p^b)\times\prod_{p\in S}\sum_{a=0}^\infty p^{-as}\sum_{b\in\{0,\dots,3a\}}p^{-bw}r_{4k}^{*}(p^b)
\\
&=:\prod_p\mathcal{F}_p(s,w).
\end{align*}

\begin{prop}
For each prime $p$, the series $\mathcal{F}_p$ defines a holomorphic function on the domain $R$ defined by
\[
\Re s>\frac{15}{16},\qquad\Re w>2k-\frac{17}{16}.
\]
\end{prop}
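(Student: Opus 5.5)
Fix a prime $p$ and write $\sigma=\Re s$, $\tau=\Re w$. Since $\mathcal{F}_p(s,w)$ is a double power series in $X=p^{-s}$, $Y=p^{-w}$ with nonnegative real coefficients $r_{4k}^*(p^b)$, the plan is to show that it converges absolutely and locally uniformly on $R$. Each term is an entire function of $(s,w)$, so locally uniform absolute convergence gives holomorphy on $R$ by Weierstrass's theorem, applied in each variable separately and then Hartogs/Osgood. Dropping the restriction $b\neq 2a-1$ when $p\notin S$ only enlarges the sum of absolute values, so both cases $p\in S$ and $p\notin S$ reduce to bounding
\[
\sum_{a\ge0}\sum_{b=0}^{3a}p^{-a\sigma}p^{-b\tau}r_{4k}^*(p^b).
\]

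First I bound the coefficients. The explicit formula gives $r_{4k}^*(p^b)\le C_p\,p^{b(2k-1)}$ for all $b\ge0$. For $p$ odd this follows from the geometric-sum expression $\sum_{j=0}^{b}p^{j(2k-1)}\le p^{b(2k-1)}/(1-p^{1-2k})$. For $p=2$ the formula is $c_1 2^{b(2k-1)}+c_2$ with constants $c_1,c_2$. Hence the sum is dominated by
\[
C_p\sum_{a\ge0}p^{-a\sigma}\sum_{b=0}^{3a}p^{-b(\tau-2k+1)}.
\]

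Next I split according to the sign of $\tau-(2k-1)$.
\begin{itemize}
\item If $\tau\ge 2k-1$, the inner sum is at most $3a+1$. The outer series $\sum_a(3a+1)p^{-a\sigma}$ then converges because $\sigma>15/16>0$.
\item If $2k-17/16<\tau<2k-1$, put $\eta=2k-1-\tau\in(0,1/16)$. The inner sum is $\ll_{\eta} p^{3a\eta}$, so the outer series is $\sum_a p^{-a(\sigma-3\eta)}$. Since $\sigma-3\eta>15/16-3/16>0$, it converges.
\end{itemize}
All these bounds are uniform on compact subsets of $R$, because on such a set $\sigma$ is bounded below by some $\sigma_0>15/16$ and $\tau$ by some $\tau_0>2k-17/16$, and we may use $\sigma_0,\tau_0$ throughout.

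The only step needing care is the worst exponent, which comes from the corner $b=3a$ in the second case. This is where the two constants $15/16$ and $17/16$ interact, through the inequality $\sigma+3\tau-3(2k-1)>0$. There is ample room in this inequality, so I expect no real obstacle. The uniformity near the boundary of $R$ is handled by the local compact-subset argument above.
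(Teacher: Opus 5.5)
Your proof is correct, but it takes a different route from the paper. The paper sums the series in closed form. With $x=p^{-s}$, $y=p^{-w}$, $z=p^{2k-1}$, it writes $\mathcal{F}_p$ as a rational function, separately in the four cases ($p=2$ or not, $p\in S$ or not). The denominators are built from $1-x$, $1-xy^3$, $1-xy^3z^3$, and for $p\notin S$ also $1-xy^2$, $1-xy^2z^2$. Holomorphy is read off from the non-vanishing of these factors on $R$, which rests on the same inequalities $\Re s>0$, $\Re s+3(\Re w-2k+1)>0$, $\Re s+2(\Re w-2k+1)>0$ that drive your estimate. You instead dominate the defining series termwise using only $r^*_{4k}(p^b)\ll_p p^{b(2k-1)}$, then apply the M-test.

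Your route has three advantages:
\begin{enumerate}
\item It is shorter and avoids the case analysis.
\item It establishes convergence of the series itself on all of $R$. The paper's summation passes through the factor $1/(1-yz)$, which is singular at some points of $R$ on the line $\Re w=2k-1$, even though it cancels in the final formulas.
\item It is insensitive to the exact coefficients. At $p=2$ the paper's identity $r^*_{4k}(2^b)=Az^b+B$ fails for $b=0$, since $A+B=1-2(-1)^k\neq 1$. This only changes $\mathcal{F}_2$ by $2(-1)^k/(1-x)$, so holomorphy is unaffected.
\end{enumerate}

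One caveat: the constant in your $\ll_\eta p^{3a\eta}$ degenerates as $\eta\to0^+$. Local uniformity should therefore rest, as you indicate, on termwise domination by the convergent series at the real point $(\sigma_0,\tau_0)\in R$, or on the cruder bound $(3a+1)p^{3a\eta}$.

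What the paper's computation buys is the explicit rational expressions, which your argument does not produce and which the sequel needs. They define $\mathcal{G}_p$ and yield the polynomial $F$ behind $\mathcal{G}_p=1+O(p^{-5/4})$ as $p\to\infty$. That in turn gives the factorization $\mathcal{F}(s,w)=\zeta(s)\zeta(s+2w-4k+2)\zeta(s+3w-6k+3)\mathcal{G}_S(s,w)$ and the local values $\mathcal{G}_p(1,2k-1)$ in the leading constant.
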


\begin{proof}
Let $x=p^{-s}$, let $y=p^{-w}$, and let $z=p^{2k-1}$.
\begin{enumerate}
\item The case $p\in S\setminus\{2\}$.

In this case, the series $\mathcal{F}_p$ agrees with that in the proof of \cite[Lemma~7.1]{LWZ19}.
We have
\[
\mathcal{F}_p(s,w)=\sum_{a=0}^\infty p^{-as}\sum_{b=0}^{3a}p^{-bw}r_{4k}^{*}(p^b)=\sum_{a=0}^\infty x^a\sum_{b=0}^{3a}y^b\frac{1-p^{(2k-1)(b+1)}}{1-p^{2k-1}}.
\]
Hence, we obtain
\begin{align*}
&(1-z)\mathcal{F}_p(s,w)
\\
&=\sum_{a=0}^\infty x^a\sum_{b=0}^{3a}y^b(1-z^{b+1})
\\
&=\sum_{a=0}^\infty x^a\left(\frac{1-y^{3a+1}}{1-y}-\frac{z(1-y^{3a+1}z^{3a+1})}{1-yz}\right)
\\
&=\frac{1}{1-y}\sum_{a=0}^\infty(x^a-x^ay^{3a+1})-\frac{z}{1-yz}\sum_{a=0}^\infty(x^a-x^ay^{3a+1}z^{3a+1})
\\
&=\frac{1}{1-y}\left(\frac{1}{1-x}-\frac{y}{1-xy^3}\right)-\frac{z}{1-yz}\left(\frac{1}{1-x}-\frac{yz}{1-xy^3z^3}\right)
\\
&=\frac{(1+xy+xyz+xy^2+xy^2z+xy^2z^2+xy^3z+xy^3z^2+x^2y^4z^2)(1-z)}{(1-x)(1-xy^3)(1-xy^3z^3)}.
\end{align*}
This means
\[
\mathcal{F}_p(s,w)=\frac{1+xy+xyz+xy^2+xy^2z+xy^2z^2+xy^3z+xy^3z^2+x^2y^4z^2}{(1-x)(1-xy^3)(1-xy^3z^3)}.
\]
Hence, $\mathcal{F}_p(s,w)$ is holomorphic on $R$.
\item The case $p\notin S\cup\{2\}$.

We have
\[
\mathcal{F}_p(s,w)=\sum_{a=0}^\infty p^{-as}\sum_{b\in\{0,\dots,3a\}\setminus\{2a-1\}}p^{-bw}r_{4k}^{*}(p^b)=\sum_{a=0}^\infty x^a\sum_{b\in\{0,\dots,3a\}\setminus\{2a-1\}}y^b\frac{1-p^{(2k-1)(b+1)}}{1-p^{2k-1}}.
\]
Hence, we obtain
\begin{align*}
&(1-z)\mathcal{F}_p(s,w)
\\
&=\sum_{a=0}^\infty x^a\sum_{b\in\{0,\dots,3a\}\setminus\{2a-1\}}y^b(1-z^{b+1})
\\
&=\sum_{a=0}^\infty x^a\sum_{b=0}^{3a}y^b(1-z^{b+1})-\sum_{a=1}^\infty x^ay^{2a-1}(1-z^{2a})
\\
&=\frac{(1+xy+xyz+xy^2+xy^2z+xy^2z^2+xy^3z+xy^3z^2+x^2y^4z^2)(1-z)}{(1-x)(1-xy^3)(1-xy^3z^3)}-\frac{xy}{1-xy^2}+\frac{xyz^2}{1-xy^2z^2}
\\
&=(1-z)\left(\frac{1+xy+xyz+xy^2+xy^2z+xy^2z^2+xy^3z+xy^3z^2+x^2y^4z^2}{(1-x)(1-xy^3)(1-xy^3z^3)}-\frac{xy(1+z)}{(1-xy^2)(1-xy^2z^2)}\right).
\end{align*}
This means
\begin{align*}
&\mathcal{F}_p(s,w)
\\
&=\frac{1+xy+xyz+xy^2+xy^2z+xy^2z^2+xy^3z+xy^3z^2+x^2y^4z^2}{(1-x)(1-xy^3)(1-xy^3z^3)}-\frac{xy(1+z)}{(1-xy^2)(1-xy^2z^2)}
\\
&=\frac{1+F(x,y,z)}{(1-x)(1-xy^3)(1-xy^2)(1-xy^3z^3)(1-xy^2z^2)},
\end{align*}
where $F(X,Y,Z)$ is a polynomial contained in
\[
\bigoplus_{\substack{(a,b,c)\in\mathbb{Z}_{\geq0}^3\\ a+(2k-1)(b-c)\geq2,\\ a+b\leq12}}\mathbb{Z}X^aY^bZ^c.
\]
Concretely, the polynomial $F(X,Y,Z)$ is given as follows:
\begin{align*}
&F(X,Y,Z)
\\
&=X^2YZ-X^2Y^3Z^3-X^3Y^4Z^4
\\
&\quad+XY^2Z+XY^3Z^2+X^2Y-X^2Y^3Z^2-X^2Y^5Z^4-X^3Y^4Z^3
\\
&\quad+XY^3Z-X^2Y^3Z-X^2Y^5Z^3+X^3Y^5Z^3
\\
&\quad-X^2Y^3-X^2Y^5Z^2-X^3Y^4Z+X^3Y^5Z^2+X^3Y^6Z^3+X^4Y^7Z^4
\\
&\quad-X^2Y^5Z-X^3Y^4+X^4Y^7Z^3+X^4Y^8Z^4.
\end{align*}
Hence, $\mathcal{F}_p(s,w)$ is holomorphic on $R$.
\item The case $p=2\in S$.

In this case, the series $\mathcal{F}_p$ agrees with that in the proof of \cite[Lemma~7.1]{LWZ19}.
Let
\[
A=1-\frac{(-1)^k}{1-2^{2k-1}},\qquad B=-(-1)^k\frac{1-2^{2k}}{1-2^{2k-1}}.
\]
Then we have $r_{4k}^{*}(2^b)=Az^b+B$.
Hence, we have
\begin{align*}
&\mathcal{F}_2(s,w)
\\
&=A\sum_{a=0}^\infty x^a\sum_{b=0}^{3a}(yz)^b+B\sum_{a=0}^\infty x^a\sum_{b=0}^{3a}y^b
\\
&=\frac{A}{1-yz}\left(\frac{1}{1-x}-\frac{yz}{1-xy^3z^3}\right)+\frac{B}{1-y}\left(\frac{1}{1-x}-\frac{y}{1-xy^3}\right)
\\
&=\frac{1+xyz+xy^2z^2}{(1-x)(1-xy^3z^3)}A+\frac{1+xy+xy^2}{(1-x)(1-xy^3)}B.
\end{align*}
Therefore, $\mathcal{F}_2(s,w)$ is holomorphic on $R$.
\item The case $p=2\notin S$.

Define $A$ and $B$ as above.
Then we have
\begin{align*}
&\mathcal{F}_2(s,w)
\\
&=\sum_{a=0}^\infty x^a\sum_{b\in\{0,\dots,3a\}\setminus\{2a-1\}}y^br_{4k}^{*}(2^b)
\\
&=\sum_{a=0}^\infty x^a\sum_{b\in\{0,\dots,3a\}\setminus\{2a-1\}}y^b(Az^b+B)
\\
&=A\sum_{a=0}^\infty x^a\sum_{b=0}^{3a}(yz)^b-A\sum_{a=1}^\infty x^ay^{2a-1}z^{2a-1}+B\sum_{a=0}^\infty x^a\sum_{b=0}^{3a}y^b-B\sum_{a=1}^\infty x^ay^{2a-1}
\\
&=\frac{A}{1-yz}\sum_{a=0}^\infty(x^a-x^ay^{3a+1}z^{3a+1})-A\sum_{a=1}^\infty x^ay^{2a-1}z^{2a-1}
\\
&\qquad+\frac{B}{1-y}\sum_{a=0}^\infty(x^a-x^ay^{3a+1})-B\sum_{a=1}^\infty x^ay^{2a-1}
\\
&=\frac{A}{1-yz}\left(\frac{1}{1-x}-\frac{yz}{1-xy^3z^3}\right)-\frac{Axyz}{1-xy^2z^2}
\\
&\qquad+\frac{B}{1-y}\left(\frac{1}{1-x}-\frac{y}{1-xy^3}\right)-\frac{Bxy}{1-xy^2}
\\
&=\frac{(1+x^2yz)(1-x^2y^3z^3)}{(1-x)(1-xy^2z^2)(1-xy^3z^3)}A+\frac{(1+x^2y)(1-x^2y^3)}{(1-x)(1-xy^2)(1-xy^3)}B
\\
&=\frac{(1+2^{-2s-(w-2k+1)})(1-2^{-2s-3(w-2k+1)})}{(1-2^{-s})(1-2^{-s-2(w-2k+1)})(1-2^{-s-3(w-2k+1)})}\left(1-\frac{(-1)^k}{1-2^{2k-1}}\right)
\\
&\qquad-\frac{(-1)^k(1+2^{-2s-w})(1-2^{-2s-3w})}{(1-2^{-s})(1-2^{-s-2w})(1-2^{-s-3w})}\frac{1-2^{2k}}{1-2^{2k-1}}.
\end{align*}
Therefore, $\mathcal{F}_2(s,w)$ is holomorphic on $R$.\qedhere
\end{enumerate}
\end{proof}

By the expression for $\mathcal{F}_p(s,w)$ for $p\notin S\cup\{2\}$, the Euler product
\[
\mathcal{F}(s,w)=\prod_p\mathcal{F}_p(s,w)
\]
converges for $\Re s>1$ and $\Re w>2k-1$.
To study this Euler product, we introduce the following function $\mathcal{G}_p(s,w)$.

\begin{dfn}\label{G}
For each prime $p$, we define a holomorphic function $\mathcal{G}_p$ on $R$ by
\[
\mathcal{G}_p(s,w)=(1-p^{-s})(1-p^{-s-2(w-2k+1)})(1-p^{-s-3(w-2k+1)})\mathcal{F}_p(s,w).
\]
\end{dfn}

\begin{prop}
For every compact subset $K$ of $R$, we have
\[
\mathcal{G}_p(s,w)=1+O(p^{-5/4})\quad(p\to\infty)
\]
uniformly for $(s,w)\in K$.
Hence, the Euler product
\[
\mathcal{G}_S(s,w):=\prod_p\mathcal{G}_p(s,w)
\]
converges uniformly on compact subsets of $R$, and defines a holomorphic function on $R$.
\end{prop}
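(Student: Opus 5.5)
It suffices to treat primes $p\notin S\cup\{2\}$, since only finitely many primes are excluded and the asymptotic statement concerns $p\to\infty$. For such $p$ the proof of the previous proposition gives
\[
\mathcal{F}_p(s,w)=\frac{1+F(x,y,z)}{(1-x)(1-xy^3)(1-xy^2)(1-xy^3z^3)(1-xy^2z^2)},
\]
with $x=p^{-s}$, $y=p^{-w}$, $z=p^{2k-1}$. Write $Y=yz=p^{-(w-2k+1)}$. Then the three factors in Definition~\ref{G} are $1-x$, $1-xY^2$, and $1-xY^3$, and they cancel against $(1-x)$, $(1-xy^2z^2)$, and $(1-xy^3z^3)$. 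Hence
\[
\mathcal{G}_p(s,w)=\frac{1+F(x,y,z)}{(1-xy^2)(1-xy^3)}.
\]

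The main step is to bound each of $xy^2$, $xy^3$, and each monomial of $F$ by $O(p^{-5/4})$, uniformly on a compact $K\subset R$. On $K$ there is $\eta>0$ with $\Re s\ge 15/16+\eta$ and $\Re w\ge 2k-17/16+\eta$. Hence $|x|\le p^{-15/16}$ and $|Y|\le p^{1/16}$, while $|y|\le p^{-(2k-17/16)}\le p^{-15/16}$ because $k\ge1$.

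For the denominator factors we get $|xy^2|\le p^{-45/16}$ and $|xy^3|\le p^{-60/16}$, so
\[
(1-xy^2)^{-1}(1-xy^3)^{-1}=1+O(p^{-45/16}).
\]

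For a monomial $x^ay^bz^c$ of $F$, rewrite it as $x^aY^c y^{b-c}$. The key input is the recorded support condition $a+(2k-1)(b-c)\ge2$ together with $a+b\le12$. I expect checking that this condition gives the needed decay to be the main obstacle, since $c$ may exceed $b$ and then $y^{b-c}$ grows.

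To handle this, take absolute values. With $\sigma=\Re s$ and $\tau=\Re w-(2k-1)$, the exponent of $p$ in $|x^aY^cy^{b-c}|$ is
\[
-a\sigma-c\tau-(b-c)(2k-1+\tau)=-a\sigma-b\tau-(b-c)(2k-1).
\]
Since $\sigma>15/16$ and $\tau>-1/16$, this is at most
\[
-\tfrac{15}{16}a+\tfrac{b}{16}-(b-c)(2k-1)=-\bigl(a+(b-c)(2k-1)\bigr)+\tfrac{a+b}{16}\le -2+\tfrac{12}{16}=-\tfrac54.
\]
Two points need checking here. First, the sign of $b$ matters only through the bound $-b\tau\le b/16$, which is valid since $b\ge0$. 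Second, the strict inequalities and the margin $\eta$ make the bound uniform on $K$. Since $F$ has finitely many terms with bounded integer coefficients, this gives $F=O(p^{-5/4})$, and therefore $\mathcal{G}_p=1+O(p^{-5/4})$.

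Finally, $\sum_p p^{-5/4}<\infty$, and each $\mathcal{G}_p$ is holomorphic on $R$ by the previous proposition. The standard Weierstrass-type criterion for infinite products, namely that $\sum_p\sup_K|\mathcal{G}_p-1|<\infty$, then gives uniform convergence on compact subsets of $R$. The limit $\mathcal{G}_S$ is holomorphic on $R$ as a locally uniform limit of holomorphic functions.
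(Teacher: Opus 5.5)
Your proposal is correct and follows the paper's proof essentially step for step. You reduce to $p\notin S\cup\{2\}$ and cancel the three factors to get $\mathcal{G}_p=(1+F)/((1-xy^2)(1-xy^3))$. You bound the denominator by $1+O(p^{-45/16})$, and you bound each monomial of $F$ by $p^{(a+b)/16-2}\le p^{-5/4}$ via the support condition $a+(2k-1)(b-c)\ge 2$, $a+b\le 12$. Your explicit appeal to $\sum_p\sup_K|\mathcal{G}_p-1|<\infty$ for the product just spells out what the paper leaves implicit.
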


\begin{proof}
Since $S\cup\{2\}$ is a finite set, we may assume that $p\notin S\cup\{2\}$.
Then we have
\[
\mathcal{G}_p(s,w)=\frac{1+F(p^{-s},p^{-w},p^{2k-1})}{(1-p^{-s-2w})(1-p^{-s-3w})}
\]
and $(1-p^{-s-2w})^{-1}(1-p^{-s-3w})^{-1}=1+O(p^{-45/16})$ on $K$.
Furthermore, we have
\begin{align*}
F(p^{-s},p^{-w},p^{2k-1})&=\sum_{\substack{a,b,c\in\mathbb{Z}_{\geq0},\\a+(2k-1)(b-c)\geq2,\\ a+b\leq12}}O(p^{-a\Re s-b\Re w+c(2k-1)})
\\
&=\sum_{\substack{a,b,c\in\mathbb{Z}_{\geq0},\\a+(2k-1)(b-c)\geq2,\\ a+b\leq12}}O(p^{-a(\Re s-1)-b(\Re w-2k+1)-a-(2k-1)(b-c)})
\\
&=\sum_{\substack{a,b,c\in\mathbb{Z}_{\geq0},\\a+(2k-1)(b-c)\geq2,\\ a+b\leq12}}O(p^{\frac{a+b}{16}-2})
\\
&=O(p^{-5/4})
\end{align*}
uniformly for $(s,w)\in K$.
This proves the proposition.
\end{proof}

By taking the Euler product of the series in
Definition~\ref{G}, we obtain the following proposition.

\begin{prop}\label{Fzeta}
For $\Re s>1$ and $\Re w>2k-1$, we have
\[
\mathcal{F}(s,w)=\zeta(s)\zeta(s+2w-4k+2)\zeta(s+3w-6k+3)\mathcal{G}_S(s,w),
\]
where $\mathcal{G}_S(s,w)$ is the Euler product defined above.
Furthermore, $\mathcal{G}_S(s,w)$ converges absolutely for
\[
\Re s\geq\frac{15}{16},\qquad\Re w\geq2k-\frac{17}{16},
\]
and, in this half-plane, $\mathcal{G}_S(s,w)$ does not vanish and
\[
\mathcal{G}_S(s,w)\ll 1.
\]
In particular, this gives a meromorphic continuation of
$\mathcal F(s,w)$ to $R$.
\end{prop}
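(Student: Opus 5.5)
The plan is to compare Euler factors directly. For $\Re s>1$ and $\Re w>2k-1$, the Euler product $\mathcal{F}(s,w)=\prod_p\mathcal{F}_p(s,w)$ converges absolutely. So do the Euler products of the three zeta functions: the arguments satisfy $\Re s>1$, $\Re(s+2w-4k+2)>1$ and $\Re(s+3w-6k+3)>1$. Hence on this region
\[
\prod_p(1-p^{-s})(1-p^{-s-2(w-2k+1)})(1-p^{-s-3(w-2k+1)})\mathcal{F}_p(s,w)=\frac{\mathcal{F}(s,w)}{\zeta(s)\zeta(s+2w-4k+2)\zeta(s+3w-6k+3)}.
\]
By Definition~\ref{G}, the left side is $\prod_p\mathcal{G}_p(s,w)=\mathcal{G}_S(s,w)$, which gives the identity. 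I must check that the exponents match: $s+2(w-2k+1)=s+2w-4k+2$, and similarly for the third factor.

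For the second part, I will redo the estimate in the preceding proposition uniformly on the closed region $\Re s\ge 15/16$, $\Re w\ge 2k-17/16$, instead of on compact subsets.

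\emph{Generic primes.} For $p\notin S\cup\{2\}$ I use the explicit formula
\[
\mathcal{G}_p=\frac{1+F(p^{-s},p^{-w},p^{2k-1})}{(1-p^{-s-2w})(1-p^{-s-3w})}.
\]
Each monomial is bounded by $p^{(a+b)/16-2}\le p^{-5/4}$. This bound depends only on the real parts and uses only the lower bounds on them, so it is uniform on the whole closed region; the same holds for the denominators. Thus $|\mathcal{G}_p-1|\le Cp^{-5/4}$ with $C$ independent of $(s,w)$. Consequently $\sum_p|\mathcal{G}_p-1|$ converges uniformly, the product converges absolutely, and $|\mathcal{G}_S|\le\prod_p(1+Cp^{-5/4})\ll 1$ from the generic primes.

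\emph{Finitely many primes.} For $p\in S\cup\{2\}$, the closed-form rational expressions computed earlier show that each $\mathcal{G}_p$ is bounded on the region: $|x|=p^{-\Re s}<1$ and $|xy^2z^2|,|xy^3z^3|\le p^{-15/16+17/16\cdot(-2)+\dots}<1$. The denominators are bounded away from $0$ since $|x|\le p^{-15/16}$, and similarly for the other factors.

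\emph{Non-vanishing.} This is the main obstacle. An absolutely convergent product $\prod(1+a_p)$ is nonzero provided no individual factor vanishes. I will choose $p_0$ with $Cp_0^{-5/4}<1/2$, so that the factors with $p>p_0$ give a nonzero tail bounded below by $\prod_{p>p_0}(1-Cp^{-5/4})>0$. For the finitely many $p\le p_0$ I need $\mathcal{G}_p\ne 0$ on the region.

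For $p\in S$ odd, $\mathcal{G}_p$ is a positive-coefficient rational expression. I would bound it from below by $1-\sum|\text{monomials}|$, which works when $p$ is large. For small primes this could fail, so I would argue instead via the factorization $(1-x)\mathcal{F}_p\cdot(\dots)$ and note that $\mathcal{F}_p$ is a sum over $a$ of nonzero... Honestly, the cleanest route I would try first is to shrink the region slightly if needed, or to verify by direct inspection of the numerators $1+xy+\dots$, $(1+x^2yz)(1-x^2y^3z^3)$, and so on, that $|\text{numerator}-1|<1$ on the region. I expect this verification at small primes, especially $p=2$ with the constants $A,B$, to be the delicate step.

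Finally, meromorphic continuation follows at once: $\mathcal{G}_S$ is holomorphic on $R$ by the preceding proposition, and the zeta factors are meromorphic, so the product formula defines the continuation of $\mathcal{F}$ to $R$.
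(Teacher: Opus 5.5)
Your proofs of the identity, of the uniform bound $|\mathcal{G}_p-1|\le Cp^{-5/4}$ on the closed region, of absolute convergence with $\mathcal{G}_S\ll1$, and of the meromorphic continuation are correct. The uniform bound holds because it uses only lower bounds on $\Re s$ and $\Re w$. These steps are what the paper means by ``taking the Euler product''; the paper gives no separate argument for non-vanishing.

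\textbf{The gap is the non-vanishing.} You correctly reduce it to $\mathcal{G}_p\neq0$ for the finitely many $p\le p_0$, but you do not prove this. Your proposed check, $|\text{numerator}-1|<1$, cannot work in general, because the claim itself fails. Take $k=1$, $3\in S$ and $w=1$, so that $y=1/3$ and $z=3$. Then the numerator of $\mathcal{F}_3$ is
\[
1+xy+xyz+xy^2+xy^2z+xy^2z^2+xy^3z+xy^3z^2+x^2y^4z^2=\tfrac19\bigl(x^2+29x+9\bigr).
\]
You can confirm this directly, since here $y^b r_{4}^{*}(3^b)=(3-3^{-b})/2$. This numerator vanishes at $x_0=(-29+\sqrt{805})/2\approx-0.314$. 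Since $|x_0|<3^{-15/16}$, there is $s_0$ with $3^{-s_0}=x_0$, namely $\Re s_0\approx1.055$ and $\Im s_0=\pi/\log 3$. Then
\[
\mathcal{G}_3(s_0,1)=\tfrac19\bigl(x_0^2+29x_0+9\bigr)\frac{1-x_0}{1-x_0/27}=0,
\]
so $\mathcal{G}_S(s_0,1)=0$ at an interior point of the region, and hence $\mathcal{F}(s_0,1)=0$ too.

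No verification at small primes can close this step; the statement has to be weakened. Your argument does give everything the later sections actually use:
\begin{enumerate}
\item boundedness of $\mathcal{G}_S$;
\item non-vanishing of the tail $\prod_{p>p_0}\mathcal{G}_p$;
\item positivity at real points, in particular $\mathcal{G}_S(1,2k-1)>0$.
\end{enumerate}
Positivity holds because, for real $(s,w)$ in the region, each $\mathcal{F}_p$ is a convergent series of nonnegative terms with constant term $1$ (using $r_{4k}^{*}(p^b)>0$). The removed factors $1-p^{-s}$, $1-p^{-s-2(w-2k+1)}$ and $1-p^{-s-3(w-2k+1)}$ are also positive there. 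By continuity, this gives non-vanishing in a small neighbourhood of $(1,2k-1)$, which is the most one can claim in general.
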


We calculate the integral appearing in Proposition~\ref{Perron}:
\[
\int_{\kappa-iB^{8k}}^{\kappa+iB^{8k}}\int_{\lambda-iB^{8k}}^{\lambda+iB^{8k}}\frac{\mathcal{F}(s,w)X^{s+1}Y^{w+1}}{s(s+1)w(w+1)}\,\mathrm{d}w\mathrm{d}s.
\]
First, we fix $s\in\mathbb{C}$ such that $\Re s=\kappa$ and $|\Im s|\leq B^{8k}$, and calculate the inner integral
\[
\int_{\lambda-iB^{8k}}^{\lambda+iB^{8k}}\frac{\mathcal{F}(s,w)Y^{w+1}}{w(w+1)}\,\mathrm{d}w.
\]
Let us consider the rectangle
\[
2k-1-\varepsilon\leq\Re w\leq\lambda,\qquad|\Im w|\leq B^{8k}.
\]
Sicne $B>e^{32}$, the integrand has poles at $w=(4k-1-s)/2, (6k-2-s)/3$ inside this rectangle and the residues are
\[
\frac{2\zeta(s)\zeta(\frac{3-s}{2})\mathcal{G}_S(s,\frac{4k-1-s}{2})}{(s-4k+1)(s-4k-1)},\quad\frac{3\zeta(s)\zeta(\frac{s+2}{3})\mathcal{G}_S(s,\frac{6k-2-s}{3})}{(s-6k+2)(s-6k-1)},
\]
respectively.

\begin{dfn}\label{IR}
We define functions $I_1$, $I_{\pm}$, $R_1$, and $R_2$ as follows:
\begin{enumerate}
\item $\displaystyle I_1(X,Y)=\int_{\kappa-iB^{8k}}^{\kappa+iB^{8k}}\int_{2k-1-\varepsilon-iB^{8k}}^{2k-1-\varepsilon+iB^{8k}}\frac{\mathcal{F}(s,w)X^{s+1}Y^{w+1}}{s(s+1)w(w+1)}\,\mathrm{d}w\mathrm{d}s$,
\item $\displaystyle I_{\pm}(X,Y)=\int_{\kappa-iB^{8k}}^{\kappa+iB^{8k}}\int_{2k-1-\varepsilon\pm iB^{8k}}^{\lambda\pm iB^{8k}}\frac{\mathcal{F}(s,w)X^{s+1}Y^{w+1}}{s(s+1)w(w+1)}\,\mathrm{d}w\mathrm{d}s$,
\item $\displaystyle R_1(X,Y)=\int_{\kappa-iB^{8k}}^{\kappa+iB^{8k}}\frac{2\zeta(s)\zeta(\frac{3-s}{2})\mathcal{G}_S(s,\frac{4k-1-s}{2})X^{s+1}Y^{\frac{4k+1-s}{2}}}{s(s+1)(s-4k+1)(s-4k-1)}\,\mathrm{d}s$,
\item $\displaystyle R_2(X,Y)=\int_{\kappa-iB^{8k}}^{\kappa+iB^{8k}}\frac{3\zeta(s)\zeta(\frac{s+2}{3})\mathcal{G}_S(s,\frac{6k-2-s}{3})X^{s+1}Y^{\frac{6k+1-s}{3}}}{s(s+1)(s-6k+2)(s-6k-1)}\,\mathrm{d}s$.
\end{enumerate} 
\end{dfn}

By Cauchy's integral theorem, we have the following proposition.
\begin{prop}\label{MIR}
For $B^{1-\varepsilon}/2\leq X\leq 2B$ and $B^{2-3\varepsilon}/2\leq Y\leq 2B^2$, we have
\[
M(X,Y)=\frac{I_1(X,Y)+I_{+}(X,Y)-I_{-}(X,Y)+4\pi iR_1(X,Y)+6\pi iR_2(X,Y)}{(2\pi i)^2}+O(1)
\]
as $X, Y, B\to\infty$.
\end{prop}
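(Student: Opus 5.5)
The plan is to start from Proposition~\ref{Perron} and, for each fixed $s$ on the segment $\Re s=\kappa$, $|\Im s|\le B^{8k}$, shift the inner $w$-contour. Replace the vertical segment $\Re w=\lambda$ by the boundary of the rectangle $2k-1-\varepsilon\le\Re w\le\lambda$, $|\Im w|\le B^{8k}$. By Proposition~\ref{Fzeta}, the function $w\mapsto \mathcal F(s,w)Y^{w+1}/(w(w+1))$ is meromorphic on a neighbourhood of this closed rectangle, since $2k-1-\varepsilon>2k-17/16$ and $\Re s=\kappa>15/16$. Inside the rectangle:
\begin{itemize}
\item $\zeta(s)$ does not depend on $w$;
\item $w=0,-1$ lie outside;
\item the only poles are the simple poles of $\zeta(s+2w-4k+2)$ at $w=(4k-1-s)/2$ and of $\zeta(s+3w-6k+3)$ at $w=(6k-2-s)/3$.
\end{itemize}
I will check that these points lie strictly inside. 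Their real parts are $2k-1-(\kappa-1)/2$ and $2k-1-(\kappa-1)/3$, that is, $2k-1-1/(2\log B)$ and $2k-1-1/(3\log B)$. These lie in $(2k-1-\varepsilon,\lambda)$ because $1/\log B<1/32=\varepsilon$ when $B>e^{32}$. Their imaginary parts are $-\Im s/2$ and $-\Im s/3$, which have modulus at most $B^{8k}$. The residues are the expressions displayed just before Definition~\ref{IR}. To get them, note that $\zeta(s+2w-4k+2)$ has residue $1/2$ in $w$, and simplify $w(w+1)$ at $w=(4k-1-s)/2$ to $(s-4k+1)(s-4k-1)/4$. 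This produces the factor $2$, and similarly the factor $3$ for the second pole. Together with $Y^{w+1}$ these give $Y^{(4k+1-s)/2}$ and $Y^{(6k+1-s)/3}$.

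Cauchy's residue theorem then expresses the inner integral along $\Re w=\lambda$ as the sum of three pieces:
\begin{itemize}
\item the integral along $\Re w=2k-1-\varepsilon$;
\item the two horizontal pieces at $\Im w=\pm B^{8k}$, with orientations chosen to match $I_+$ and $I_-$ (so that one enters with a plus sign and the other with a minus sign);
\item $2\pi i$ times the sum of the residues.
\end{itemize}
Next, multiply by $X^{s+1}/(s(s+1))$ and integrate over $s$. Linearity and Fubini apply because all integrands are continuous on compact segments. This yields
\[
I_1+I_+-I_-+2\pi i\,(2R_1)+2\pi i\,(3R_2),
\]
where the factors $2$ and $3$ come from the residues, consistent with the normalisation of $R_1$ and $R_2$ in Definition~\ref{IR}. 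Dividing by $(2\pi i)^2$ and keeping the $O(1)$ from Proposition~\ref{Perron} gives the claim.

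The main obstacle is verifying that no other singularities occur and that the orientation conventions in $I_\pm$ are correct. The possible extra singularities would come from $\mathcal G_S$, but Proposition~\ref{Fzeta} guarantees that $\mathcal G_S$ is holomorphic on the whole region. I must also confirm that the two poles never coincide with each other or with a horizontal edge. They coincide only when $(4k-1-s)/2=(6k-2-s)/3$, i.e.\ $s=1$, which is excluded since $\Re s=\kappa>1$. The pole imaginary parts have modulus strictly less than $B^{8k}$ unless $\Im s=0$, and in any case $|\Im s|/2<B^{8k}$, so no pole touches a horizontal edge. With these checks, the statement is an exact identity up to the Perron error.
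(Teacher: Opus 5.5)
Your argument is the paper's own proof: the paper says only ``by Cauchy's integral theorem'', meaning shift the $w$-line from $\Re w=\lambda$ to $\Re w=2k-1-\varepsilon$, collect the poles at $w=(4k-1-s)/2$ and $w=(6k-2-s)/3$, and identify the horizontal edges with $I_\pm$. The details the paper omits, which you supply, are all correct: holomorphy of $\mathcal{G}_S$ on the rectangle, the pole locations (via $1/(2\log B)<\varepsilon$), non-collision of the poles (only at $s=1$), the residues, and the signs of $I_+$ and $I_-$.

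One bookkeeping step is wrong as written. The residues you computed already contain the numerical factors. The first is $2\zeta(s)\zeta(\frac{3-s}{2})\mathcal{G}_S(s,\frac{4k-1-s}{2})Y^{(4k+1-s)/2}/((s-4k+1)(s-4k-1))$, and the second is its analogue with the factor $3$. The integrands of $R_1$ and $R_2$ in Definition~\ref{IR} also contain these factors. Multiplying by $X^{s+1}/(s(s+1))$ and integrating in $s$ therefore gives $2\pi i R_1+2\pi i R_2$, not $2\pi i(2R_1)+2\pi i(3R_2)$. You counted the $2$ and the $3$ twice, so your remark that this is ``consistent with the normalisation of Definition~\ref{IR}'' is false.

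The coefficients $4\pi i$ and $6\pi i$ in the statement are right only if $R_1,R_2$ are read without the factors $2,3$. That is how the paper actually uses them later:
\begin{itemize}
\item $R_1$ is restated without the $2$ before Proposition~\ref{ER1};
\item the residue coefficient $\mathcal{G}_S(1,2k-1)/(12k(2k-1))$ in Proposition~\ref{R2} is correct only for the integrand without the $3$ (with it one gets $1/(4k(2k-1))$);
\item the term $3E^{\pm}g$ in Proposition~\ref{SS} comes from the $6\pi i$.
\end{itemize}
You should have flagged this normalisation inconsistency in the paper and fixed one convention, rather than asserting agreement. With that correction your proof is complete.
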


Next, we calculate $E^{\pm}I_1, E^{\pm}I_{\pm}, E^{\pm}R_i, E_jI_1, E_jI_{\pm}$, and $E_jR_i$.
This gives asymptotic formulae for $S$ and $T$ by Lemma~\ref{EMS} and Corollary~\ref{Cor}.
To estimate the above integrals, we recall the bound for the zeta function.

\begin{lem}\label{zeta}
Let $\sigma, \tau\in\mathbb{R}$.
\begin{enumerate}
\item When $1<\sigma$, we have $|\zeta(\sigma+i\tau)|\leq\sigma(\sigma-1)^{-1}$.
\item When $1/2<\sigma<1-\varepsilon/2$, we have $|\zeta(\sigma+i\tau)|\ll(1+|\tau|)^{\frac{1-\sigma}{2}+\varepsilon}$.
\item When $1/2<\sigma<1$ and $1\leq|\tau|$, we have $|\zeta(\sigma+i\tau)|\ll(1+|\tau|)^{\frac{1-\sigma}{2}+\varepsilon}$.
\end{enumerate}
\end{lem}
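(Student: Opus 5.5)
Part (1) is immediate from the Dirichlet series, and parts (2) and (3) follow from the Phragmén--Lindelöf convexity principle applied between the line $\sigma=1/2$ and a line just to the right of $1$.

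For (1), when $\sigma>1$ we bound termwise:
\[
|\zeta(\sigma+i\tau)|\le\zeta(\sigma)\le 1+\int_1^\infty t^{-\sigma}\,dt=1+\frac{1}{\sigma-1}=\frac{\sigma}{\sigma-1},
\]
using that $t^{-\sigma}$ is decreasing, so $\sum_{n\ge2}n^{-\sigma}\le\int_1^\infty t^{-\sigma}dt$.

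For (3) we invoke the standard convexity bounds for $\zeta$ on vertical lines with $|\tau|\ge1$:
\begin{itemize}
\item $\zeta(1/2+i\tau)\ll(1+|\tau|)^{1/4+\varepsilon}$;
\item $\zeta(1+\eta+i\tau)\ll_\eta 1$ for a small fixed $\eta>0$, or, taking the edge at $\sigma=1$, $\zeta(1+i\tau)\ll\log(2+|\tau|)\ll(1+|\tau|)^{\varepsilon}$.
\end{itemize}
Apply Phragmén--Lindelöf to $\zeta(s)(s-1)/s$ on the strip $1/2\le\sigma\le1$, which removes the pole. Interpolating the exponent linearly gives $|\zeta(\sigma+i\tau)|\ll(1+|\tau|)^{(1-\sigma)/2+\varepsilon}$ uniformly for $1/2\le\sigma\le1$ and $|\tau|\ge1$. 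Restricting to $|\tau|\ge1$ costs only a bounded factor $|s/(s-1)|$, and the $\varepsilon$-losses at the two edges are absorbed into the single $\varepsilon$ in the exponent.

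For (2) we also need the range $|\tau|<1$. There the only difficulty is the pole at $s=1$. The hypothesis $\sigma<1-\varepsilon/2$ keeps $s$ at distance at least $\varepsilon/2$ from $1$. On the compact set $\{1/2\le\sigma\le1-\varepsilon/2,\ |\tau|\le1\}$ the function $\zeta$ is continuous, hence bounded by a constant depending only on $\varepsilon=1/32$. Since $(1+|\tau|)^{(1-\sigma)/2+\varepsilon}\ge1$, the bound holds there too. Combined with (3) on $|\tau|\ge1$, this gives (2).

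The main obstacle is keeping the implied constants uniform in $\sigma$ across the strip. Phragmén--Lindelöf handles this provided the boundary bounds are uniform and the function has finite order, and $\zeta(s)(s-1)/s$ has polynomial growth in the strip. The one subtlety is that the right edge carries a $\log$ rather than a constant. I would handle it either by interpolating to the line $\sigma=1+\varepsilon$ instead, where $\zeta$ is bounded, or by noting that $\log(2+|\tau|)\ll_\varepsilon(1+|\tau|)^{\varepsilon}$.
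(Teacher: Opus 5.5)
Your proposal is correct and takes essentially the paper's route: (1) is the ``direct calculation'' $|\zeta(\sigma+i\tau)|\le\zeta(\sigma)\le 1+\int_1^\infty t^{-\sigma}\,\mathrm{d}t$. Parts (2) and (3) are the Phragm\'en--Lindel\"of convexity bound $\zeta(\sigma+i\tau)\ll(1+|\tau|)^{(1-\sigma)/2+\varepsilon}$, which the paper quotes directly from Titchmarsh (5.1.4) rather than re-interpolating from the critical line as you do. Your compactness argument for $|\tau|<1$, using $\sigma<1-\varepsilon/2$ to stay away from the pole, makes explicit a step the paper leaves implicit; since $\varepsilon=1/32$ is fixed, take the edge inputs with a smaller loss such as $\varepsilon/4$ so the interpolated exponent stays under $(1-\sigma)/2+\varepsilon$.
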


The first inequality follows from a direct calculation.
The second and third inequalities follow from \cite[(5.1.4)]{Tit86}.

\begin{prop}\label{Ipm}
For $B^{1-\varepsilon}/2\leq X\leq 2B$ and $B^{2-3\varepsilon}/2\leq Y\leq 2B^2$, we have
\[
I_{\pm}(X,Y)=O(1)
\]
as $X, Y, B\to\infty$.
In particular, we have
\begin{align*}
E^{\pm}I_{\pm}(X,Y)&=O(1),
\\
E_jI_{\pm}(B)&=O((\log B)^2).
\end{align*}
\end{prop}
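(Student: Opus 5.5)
We bound $I_{\pm}(X,Y)$ directly by absolute values, using the factorization of Proposition~\ref{Fzeta}: on the horizontal segments $w=\sigma\pm iB^{8k}$ with $2k-1-\varepsilon\le\sigma\le\lambda$ and $\Re s=\kappa$, we write
\[
\mathcal{F}(s,w)=\zeta(s)\zeta(s+2w-4k+2)\zeta(s+3w-6k+3)\mathcal{G}_S(s,w).
\]
Note that $(s,w)$ lies in the region where $\mathcal{G}_S\ll1$, since $\kappa>15/16$ and $2k-1-\varepsilon>2k-17/16$. Then $|\zeta(s)|\le\kappa/(\kappa-1)\ll\log B$ by Lemma~\ref{zeta}(1).

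For the other two factors, the real parts are $\kappa+2(\sigma-2k+1)\ge 1+(\log B)^{-1}-2\varepsilon$ and $\kappa+3(\sigma-2k+1)\ge1+(\log B)^{-1}-3\varepsilon$. Both exceed $1/2$ and are at most $O(1)$. Their imaginary parts are $\Im s\pm 2B^{8k}$ and $\Im s\pm3B^{8k}$, each of size $\ll B^{8k}$. We bound them polynomially: where the real part exceeds $1$ we use Lemma~\ref{zeta}(1) and get $\ll\log B$. Otherwise we use Lemma~\ref{zeta}(3), which applies once the imaginary part has modulus at least $1$, and gives $\ll(1+B^{8k})^{3\varepsilon/2+\varepsilon}=B^{20k\varepsilon}$. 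The main obstacle is the case where the imaginary part is small, e.g.\ $\Im s\approx\mp2B^{8k}$, and the real part drops below or near $1$. There we avoid the pole because the real part equals $1$ only when $\sigma=2k-1-(2\log B)^{-1}$. Near that point we would use the Laurent bound $|\zeta(u)|\ll 1+|u-1|^{-1}$, and we must check that the integral over $\sigma$ of this singular factor is at most $O(\log B)$, since the singularity $|u-1|^{-1}$ is only logarithmically integrable in one variable and integrating in $\Im s$ as well makes it fine. Alternatively, and more simply, we would use Lemma~\ref{zeta}(2), which covers all $\tau$ when $\sigma<1-\varepsilon/2$, and handle the thin strip near $1$ by the Laurent expansion. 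Either way, the product of the zeta factors is $\ll B^{c k\varepsilon}(\log B)^{O(1)}$ with some absolute $c\le 40$.

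The denominator gives $|w(w+1)|\ge B^{16k}$, and $|X^{s+1}Y^{w+1}|\le (2B)^{2+(\log B)^{-1}}(2B^2)^{2k+(\log B)^{-1}}\ll B^{4k+2}$. The $w$-segment has length $O(1)$, and $\int|s(s+1)|^{-1}|\mathrm{d}s|\ll1$. Altogether
\[
I_{\pm}(X,Y)\ll B^{4k+2+40k\varepsilon-16k}(\log B)^{O(1)}=O(1),
\]
since $\varepsilon=1/32$ and $k\ge1$.

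For the consequences, $E^{\pm}I_{\pm}(X,Y)$ is a combination of four values of $I_{\pm}$ at points still in the admissible range, divided by $X^{1-\varepsilon}Y^{1-\varepsilon}\ge1$, hence $O(1)$. Here the arguments $\delta^{l}B$ and $\delta^{3l}B^2$ with $l\le k_0$ stay in range because $\delta^{k_0-1}\ge B^{-\varepsilon}$. Finally, each $E_j$ is a sum of $k_0-1=O((\log B)^2)$ such terms, as noted in the proof of Lemma~\ref{S}, giving $E_jI_{\pm}(B)=O((\log B)^2)$.
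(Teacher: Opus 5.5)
Your proposal is correct and follows the paper's proof. It is a trivial absolute-value bound in which $|w(w+1)|^{-1}\le B^{-16k}$ on the horizontal segments swamps $X^{\kappa+1}Y^{\lambda+1}\ll B^{4k+2}$ and the size of $\mathcal{F}$, followed by summing $k_0-1=O((\log B)^2)$ uniformly bounded terms for $E_jI_{\pm}$. Your convexity bound $\mathcal{F}\ll B^{O(k\varepsilon)}(\log B)^{O(1)}$ is in fact more careful than the paper's bare claim $\mathcal{F}\ll(\log B)^3$, which Lemma~\ref{zeta} does not give once the real parts of $s+2w-4k+2$ and $s+3w-6k+3$ drop below $1$.

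Your ``main obstacle'' never occurs. Since $|\Im s|\le B^{8k}$ and $\Im w=\pm B^{8k}$, you have $|\Im(s+2w)|\ge B^{8k}$ and $|\Im(s+3w)|\ge 2B^{8k}$, so no pole is approached. The one step to tidy is the thin strip where these real parts lie in $(1,1+O((\log B)^{-1})]$. There Lemma~\ref{zeta}(1) only gives $\sigma/(\sigma-1)$, not $\ll\log B$, so use instead the standard bound $\zeta(\sigma+it)\ll\log|t|$ for $\sigma\ge1$, $|t|\ge2$.
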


\begin{proof}
By Lemma~\ref{zeta}, we have $\mathcal{F}(s,w)\ll(\log B)^3$ when $\Re s=\kappa$ and $2k-1-\varepsilon\leq\Re w\leq\lambda$.
We have $|w|\geq B^{8k}$ when $\Im w=\pm B^{8k}$.
Hence, we obtain
\begin{align*}
I_{\pm}(X,Y)&=\int_{\kappa-iB^{8k}}^{\kappa+iB^{8k}}\int_{2k-1-\varepsilon\pm iB^{8k}}^{\lambda\pm iB^{8k}}\frac{\mathcal{F}(s,w)X^{s+1}Y^{w+1}}{s(s+1)w(w+1)}\,\mathrm{d}w\mathrm{d}s
\\
&\ll\frac{(\log X)^3X^{\kappa+1}Y^{\lambda+1}}{B^{16k}}\int_{\kappa-iB^{8k}}^{\kappa+iB^{8k}}\frac{\mathrm{d}s}{|s(s+1)|}
\\
&\ll\frac{(\log B)^3}{B^{4k-2}}=O(1).
\end{align*}
The latter part follows from the definitions of $E^{\pm}I_{\pm}$ and $E_jI_{\pm}$.
\end{proof}

We use the following lemma to estimate $I_1$, $R_1$, and $R_2$.
This lemma is proved by an argument similar to that in \cite[Lemma~3]{dlB98}.

\begin{lem}\label{43}
Let $H$, $X$, $\beta$, $\sigma$, $\tau\in\mathbb{R}$ be such that $0<H\leq X$, $|\sigma|\leq 3k$, and $\beta\in[0,1]$.
Then the inequality
\[
|(X+H)^{\sigma+i\tau}-X^{\sigma+i\tau}|\leq 2^{10k}X^{\sigma-\beta}(|\tau|+1)^\beta H^\beta
\]
holds.
In particular, we have
\[
|(1+H)^s-1|\leq 2^{10k}(|s|+1)H
\]
when $0<H<1$ and $|\Re s|\leq 3k$.
\end{lem}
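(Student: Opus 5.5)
The strategy is to write the difference as an integral of a derivative and bound it in two ways: trivially, which gives the case $\beta=0$, and via the integral, which gives $\beta=1$. Interpolating between the two handles general $\beta\in[0,1]$.

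Set $s=\sigma+i\tau$. Since $X>0$ and $X+H>0$, the function $t\mapsto t^{s}=e^{s\log t}$ is smooth on $[X,X+H]$. This gives
\[
(X+H)^{s}-X^{s}=s\int_X^{X+H}t^{s-1}\,\mathrm{d}t .
\]
For $t\in[X,X+H]\subseteq[X,2X]$ we have $|t^{s-1}|=t^{\sigma-1}\le 2^{|\sigma-1|}X^{\sigma-1}\le 2^{3k+1}X^{\sigma-1}$, using $|\sigma|\le 3k$. Moreover $|s|\le|\sigma|+|\tau|\le 3k(|\tau|+1)$. Hence
\[
|(X+H)^{s}-X^{s}|\le 3k\,2^{3k+1}X^{\sigma-1}(|\tau|+1)H .
\]
This is the bound for $\beta=1$. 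For the trivial bound, we have $|(X+H)^s|+|X^s|\le (2^{3k}+1)X^{\sigma}$, which is the bound for $\beta=0$.

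To pass to general $\beta$, write the difference $D$ as $|D|=|D|^{1-\beta}|D|^{\beta}$. Inserting the two bounds gives
\[
|D|\le (2^{3k}+1)^{1-\beta}(3k\,2^{3k+1})^{\beta}X^{\sigma-\beta}(|\tau|+1)^{\beta}H^{\beta}.
\]
It remains to check the constants. Since $3k\le 2^{2k}$ for $k\ge1$, both $2^{3k}+1$ and $3k\,2^{3k+1}$ are at most $2^{5k+1}\le 2^{10k}$, so the constant is bounded by $2^{10k}$ as claimed. Only routine exponent bookkeeping is needed at this step, and I expect no real obstacle.

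For the particular case, take $X=1$, $\beta=1$ and $0<H<1\le X$ in the general bound. This gives $|(1+H)^s-1|\le 2^{10k}(|\tau|+1)H$. Since $|\tau|\le|s|$, this is at most $2^{10k}(|s|+1)H$. The hypothesis $|\Re s|\le 3k$ is exactly the condition $|\sigma|\le 3k$ used above.
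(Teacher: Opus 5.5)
Your proof is correct and is essentially the standard argument the paper has in mind when it defers to de la Bret\`eche's Lemma~3 (the paper gives no proof of its own). You bound $(X+H)^s-X^s=s\int_X^{X+H}t^{s-1}\,\mathrm{d}t$ by $|s|H\max_{[X,2X]}t^{\sigma-1}$ and trivially by $(X+H)^{\sigma}+X^{\sigma}$, then interpolate via $|D|=|D|^{1-\beta}|D|^{\beta}$; your constant bookkeeping ($|s|\le 3k(|\tau|+1)$, $3k\le 2^{2k}$, $2^{5k+1}\le 2^{10k}$) and the specialization $X=1$, $\beta=1$ for the second claim both check out.
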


\begin{prop}\label{I1}
For $B^{1-\varepsilon}\leq X\leq B$ and $B^{2-3\varepsilon}\leq Y\leq B^2$, we have
\[
E^{\pm}I_1(X,Y)\ll B^{4k-1-2\varepsilon+13\varepsilon^2}\log B
\]
as $X, Y, B\to\infty$.
In particular, we have
\[
E_jI_1(B)\ll B^{4k-1-2\varepsilon+13\varepsilon^2}(\log B)^3.
\]
\end{prop}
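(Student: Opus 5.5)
We bound $E^{\pm}I_1(X,Y)$ by applying the difference operator directly under the integral sign. Because $\mathscr{D}$ is bilinear in the monomial $X^{s+1}Y^{w+1}$, we have
\[
\mathscr{D}I_1(X,X\pm X^{1-\varepsilon};Y,Y\pm Y^{1-\varepsilon})=\int\!\!\int\frac{\mathcal{F}(s,w)\bigl((X\pm X^{1-\varepsilon})^{s+1}-X^{s+1}\bigr)\bigl((Y\pm Y^{1-\varepsilon})^{w+1}-Y^{w+1}\bigr)}{s(s+1)w(w+1)}\,\mathrm{d}w\,\mathrm{d}s,
\]
with $\Re s=\kappa$ and $\Re w=2k-1-\varepsilon$. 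We estimate each difference with Lemma~\ref{43}, choosing the exponent $\beta$ separately in each variable so as to balance the decay of the denominator against the growth of $\mathcal{F}$.

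\textbf{The $s$-variable.} Take $\beta=1$ with $H=X^{1-\varepsilon}$. This gives
\[
\bigl|(X\pm X^{1-\varepsilon})^{s+1}-X^{s+1}\bigr|\ll X^{\kappa+1-\varepsilon}(|\Im s|+1).
\]
Combined with $|s(s+1)|^{-1}$, the $s$-integrand then decays only like $|\Im s|^{-1}$. We compensate using the zeta bounds below.

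\textbf{Bounding $\mathcal{F}$ on the contour.} By Proposition~\ref{Fzeta},
\[
\mathcal{F}(s,w)=\zeta(s)\,\zeta(s+2w-4k+2)\,\zeta(s+3w-6k+3)\,\mathcal{G}_S(s,w),
\]
and $\mathcal{G}_S\ll1$ there. The three zeta arguments have real parts $\kappa$, $\kappa-2\varepsilon$ and $\kappa-3\varepsilon$; the last two are roughly $1-2\varepsilon$ and $1-3\varepsilon$. Lemma~\ref{zeta}(1) gives $\zeta(s)\ll\log B$. Lemma~\ref{zeta}(2),(3) gives
\[
\zeta(s+2w-4k+2)\ll(1+|\Im s+2\Im w|)^{\varepsilon+\varepsilon},\qquad
\zeta(s+3w-6k+3)\ll(1+|\Im s+3\Im w|)^{3\varepsilon/2+\varepsilon}.
\]
These are small powers of $(1+|\Im s|)(1+|\Im w|)$, of total exponent about $4.5\varepsilon$ in each variable.

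\textbf{The $w$-variable.} Take $\beta=1-c\varepsilon$ for a small $c$, with $H=Y^{1-\varepsilon}$. This gives
\[
\bigl|(Y\pm Y^{1-\varepsilon})^{w+1}-Y^{w+1}\bigr|\ll Y^{2k-\varepsilon}\,Y^{-\beta}\,Y^{(1-\varepsilon)\beta}(|\Im w|+1)^{\beta}.
\]
With $|w(w+1)|^{-1}$ and the zeta growth, the $w$-integral converges absolutely provided $\beta+4.5\varepsilon<1$. To get absolute convergence in $s$ too, use $\beta_s=1-c'\varepsilon$ in the $s$-difference instead of $\beta_s=1$.

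\textbf{Collecting the bound.} Dividing by $X^{1-\varepsilon}Y^{1-\varepsilon}$ yields roughly
\[
X^{\kappa+1-\beta_s\varepsilon}\,Y^{2k-\varepsilon-\beta\varepsilon}\,X^{-(1-\varepsilon)}\,Y^{-(1-\varepsilon)}\,\log B.
\]
With $X\le B$, $Y\le B^2$ and $\beta,\beta_s$ close to $1$, this is
\[
B^{1+\varepsilon-\beta_s\varepsilon}\,B^{2(2k-1-\beta\varepsilon)}\,\log B\approx B^{4k-1-2\varepsilon+O(\varepsilon^2)}\log B.
\]
Tracking the exact loss in the exponents should reproduce $13\varepsilon^2$. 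The factor $\log B$ comes from $\zeta(s)$ and from $X^{\kappa}\ll X$.

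\textbf{The sum $E_jI_1$.} Each term is evaluated at points $(\delta^{l}B,\delta^{3l}B^2)$ inside the admissible range, since $\delta^{k_0-1}\ge B^{-\varepsilon}$. The number of terms is $k_0=O((\log B)^2)$, which gives the extra $(\log B)^2$.

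\textbf{Main obstacle.} The hardest part is choosing the exponents so that both integrals converge absolutely while the loss relative to $B^{4k-1-2\varepsilon}$ stays within $13\varepsilon^2$. This requires careful bookkeeping of the convexity exponents $(1-\sigma)/2+\varepsilon$ near $\sigma=1-2\varepsilon$ and $\sigma=1-3\varepsilon$, and of the mixed imaginary parts $\Im s+2\Im w$ and $\Im s+3\Im w$. For the latter I would use $1+|a+b|\le(1+|a|)(1+|b|)$. If needed, I would try $\beta=1-6\varepsilon$ in both variables.
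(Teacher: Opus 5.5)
Your skeleton is the paper's: take the difference under the integral, apply Lemma~\ref{43} with $\beta$ slightly below $1$, use convexity bounds for the three zeta factors, and count $k_0=O((\log B)^2)$ terms for $E_j$. However, the one quantitative step you defer to bookkeeping fails on your route, which cannot reach $13\varepsilon^2$.

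Write $\tau=\Im s$ and $v=\Im w$. Using $1+|a+b|\le(1+|a|)(1+|b|)$ you get a \emph{product} bound $\mathcal{F}(s,w)\ll(1+|\tau|)^{4.5\varepsilon}(1+|v|)^{4.5\varepsilon}\log B$. Absolute convergence then forces \emph{both} $\beta_s=1-c'\varepsilon$ and $\beta_w=1-c\varepsilon$ with $c,c'\ge 4.5$ at once. By your own collecting formula the loss is $(c'+2c)\varepsilon^2$, so it is at least $13.5\varepsilon^2$, and $18\varepsilon^2$ with your fallback $\beta=1-6\varepsilon$ in both variables. What you would actually prove is something like $E^{\pm}I_1(X,Y)\ll B^{4k-1-2\varepsilon+18\varepsilon^2}\log B$, which is weaker than the proposition. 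This weaker bound would still suffice for how the proposition is used later, since $2\varepsilon=64\varepsilon^2$ for $\varepsilon=1/32$. It does not establish the statement as written, and your claim that careful tracking reproduces $13\varepsilon^2$ is false for this method.

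The missing idea is to bound $\mathcal{F}$ by a \emph{sum} rather than a product. Since $1+|\tau+2v|$ and $1+|\tau+3v|$ are both at most $4\max\{1+|\tau|,1+|v|\}$, one gets $\mathcal{F}(s,w)\ll\bigl((1+|\tau|)^{5\varepsilon}+(1+|v|)^{5\varepsilon}\bigr)\log B$.

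Split the double integral into the two corresponding pieces. In each piece, sacrifice only in the variable that carries the zeta growth: take $\beta=\kappa-6\varepsilon$ there and $\beta=1-\varepsilon$ in the other variable. Both integrals still converge absolutely. The piece with $(1+|\tau|)^{5\varepsilon}$ loses $6\varepsilon^2+2\cdot\varepsilon^2=8\varepsilon^2$. The piece with $(1+|v|)^{5\varepsilon}$ loses $\varepsilon^2+2\cdot 6\varepsilon^2=13\varepsilon^2$, which is exactly the stated exponent.

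A minor point: the factor $\log B$ comes only from $\zeta(s)\ll\log B$. The relation $X^{\kappa}\le eX$ contributes just a bounded constant.
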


\begin{proof}
Let $s=\sigma+i\tau, w=u+iv\in\mathbb{C}$ be such that $\sigma=\kappa$, $|\tau|\leq B^{8k}$, $u=2k-1-\varepsilon$, and $|v|\leq B^{8k}$.
By Lemma~\ref{zeta}, we have
\[
\mathcal{F}(s,w)\ll((1+|\tau|)^{5\varepsilon}+(1+|v|)^{5\varepsilon})\log B.
\]
By applying Lemma~\ref{43} to $\beta=\kappa-6\varepsilon$, we have
\begin{align*}
|(X+X^{1-\varepsilon})^{s+1}-X^{s+1}|&\ll X^{2-\varepsilon+6\varepsilon^2}(1+|\tau|)^{1-6\varepsilon}\ll B^{1+6\varepsilon^2}(1+|\tau|)^{1-6\varepsilon}X^{1-\varepsilon},
\\
|(Y+Y^{1-\varepsilon})^{w+1}-Y^{w+1}|&\ll B^{4k-2-2\varepsilon+12\varepsilon^2}(1+|v|)^{1-6\varepsilon}Y^{1-\varepsilon}.
\end{align*}
We note that $\kappa-6\varepsilon<1$ since $\varepsilon=32^{-1}$ and $B>e^{32}$.
On the other hand, we have
\begin{align*}
|(X+X^{1-\varepsilon})^{s+1}-X^{s+1}|&\ll B^{1+\varepsilon^2}(1+|\tau|)^{1-\varepsilon}X^{1-\varepsilon},
\\
|(Y+Y^{1-\varepsilon})^{w+1}-Y^{w+1}|&\ll B^{4k-2-2\varepsilon+2\varepsilon^2}(1+|v|)^{1-\varepsilon}Y^{1-\varepsilon}
\end{align*}
by applying Lemma~\ref{43} to $\beta=1-\varepsilon$.
Therefore, we obtain
\begin{align*}
&E^{\pm}I_1(X,Y)
\\
&=\int_{\kappa-iB^{8k}}^{\kappa+iB^{8k}}\int_{2k-1-\varepsilon-iB^{8k}}^{2k-1-\varepsilon+iB^{8k}}\frac{\mathcal{F}(s,w)((X+X^{1-\varepsilon})^{s+1}-X^{s+1})((Y+Y^{1-\varepsilon})^{w+1}-Y^{w+1})}{s(s+1)w(w+1)X^{1-\varepsilon}Y^{1-\varepsilon}}\,\mathrm{d}w\mathrm{d}s
\\
&\ll B^{1+6\varepsilon^2+4k-2-2\varepsilon+2\varepsilon^2}\log B\int_{\kappa-iB^{8k}}^{\kappa+iB^{8k}}\int_{2k-1-\varepsilon-iB^{8k}}^{2k-1-\varepsilon+iB^{8k}}\frac{(1+|\tau|)^{5\varepsilon}(1+|\tau|)^{1-6\varepsilon}(1+|v|)^{1-\varepsilon}}{|s(s+1)w(w+1)|}\,\mathrm{d}w\mathrm{d}s
\\
&+B^{4k-2-2\varepsilon+12\varepsilon^2+1+\varepsilon^2}\log B\int_{\kappa-iB^{8k}}^{\kappa+iB^{8k}}\int_{2k-1-\varepsilon-iB^{8k}}^{2k-1-\varepsilon+iB^{8k}}\frac{(1+|v|)^{5\varepsilon}(1+|v|)^{1-6\varepsilon}(1+|\tau|)^{1-\varepsilon}}{|s(s+1)w(w+1)|}\,\mathrm{d}w\mathrm{d}s
\\
&\ll B^{4k-1-2\varepsilon+13\varepsilon^2}\log B.
\end{align*}
The latter part follows from $k_0=O(\phi(B)\log B)=O((\log B)^2)$.
\end{proof}

We next evaluate the integral
\[
R_1(X,Y)=\int_{\kappa-iB^{8k}}^{\kappa+iB^{8k}}\frac{\zeta(s)\zeta(\frac{3-s}{2})\mathcal{G}_S(s,\frac{4k-1-s}{2})X^{s+1}Y^{\frac{4k+1-s}{2}}}{s(s+1)(s-4k+1)(s-4k-1)}\,\mathrm{d}s.
\]

\begin{prop}\label{ER1}
$E^{\pm}R_1(B,B^2)=O(B^{4k-1})\quad(B\to\infty)$.
\end{prop}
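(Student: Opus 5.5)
The plan is to exploit the fact that at the point $(X,Y)=(B,B^2)$ the weight $X^{s+1}Y^{\frac{4k+1-s}{2}}$ is independent of $s$. Then the only source of a logarithm is the double pole of $\zeta(s)\zeta(\frac{3-s}{2})$ at $s=1$, and we remove it by a contour shift followed by an explicit residue computation.

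\emph{Setting up the integrand.} First I write $E^{\pm}R_1(B,B^2)$ as a single contour integral. By the definition of $\mathscr{D}$,
\[
E^{\pm}R_1(B,B^2)=\int_{\kappa-iB^{8k}}^{\kappa+iB^{8k}}\frac{2\zeta(s)\zeta(\frac{3-s}{2})\mathcal{G}_S(s,\frac{4k-1-s}{2})\,h_\pm(s)}{s(s+1)(s-4k+1)(s-4k-1)}\,\mathrm{d}s .
\]
Here, writing $X=B$ and $Y=B^2$,
\[
h_\pm(s)=\frac{\bigl((X\pm X^{1-\varepsilon})^{s+1}-X^{s+1}\bigr)\bigl((Y\pm Y^{1-\varepsilon})^{\frac{4k+1-s}{2}}-Y^{\frac{4k+1-s}{2}}\bigr)}{X^{1-\varepsilon}Y^{1-\varepsilon}}
=B^{4k-1+3\varepsilon}\bigl((1\pm B^{-\varepsilon})^{s+1}-1\bigr)\bigl((1\pm B^{-2\varepsilon})^{\frac{4k+1-s}{2}}-1\bigr).
\]
For $\Re s$ in a fixed bounded range, Lemma~\ref{43} (its ``in particular'' form, with $H=B^{-\varepsilon}$, resp.\ $B^{-2\varepsilon}$) gives
\[
|h_\pm(s)|\ll B^{4k-1+3\varepsilon}(|s|+1)^2B^{-3\varepsilon}=B^{4k-1}(|s|+1)^2 .
\]
The minus sign is handled the same way, since $(1-H)^{a}-1$ obeys the same type of bound for $H\le 1/2$. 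The function $h_\pm$ is entire in $s$.

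\emph{Contour shift.} Next I shift the line $\Re s=\kappa$ to $\Re s=\sigma_0:=31/32$. This is legitimate because $(s,\frac{4k-1-s}{2})\in R$ precisely when $15/16<\Re s<9/8$, so $\mathcal{G}_S(s,\frac{4k-1-s}{2})$ is holomorphic and $\ll1$ on the whole strip by Proposition~\ref{Fzeta}. The only pole crossed is the double pole at $s=1$ coming from $\zeta(s)\zeta(\frac{3-s}{2})$; the poles $s=0,-1,4k\pm1$ lie outside the strip.

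On the new line $\Re\frac{3-s}{2}=1+\frac1{64}$, so $\zeta(\frac{3-s}{2})\ll1$, and Lemma~\ref{zeta}(2) gives $\zeta(s)\ll(1+|\tau|)^{\frac1{64}+\varepsilon}$. Hence the shifted integral is
\[
\ll B^{4k-1}\int_{\mathbb{R}}\frac{(1+|\tau|)^{2+\frac1{64}+\varepsilon}}{(1+|\tau|)^{4}}\,\mathrm{d}\tau\ll B^{4k-1}.
\]
On the horizontal segments $|\Im s|=B^{8k}$, the same bounds together with Lemma~\ref{zeta}(1),(3) for $\zeta(s)$ give a contribution $\ll B^{4k-1}(\log B)B^{-8k(2-\frac1{64}-\varepsilon)}=o(1)$.

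\emph{The residue at $s=1$ (the expected obstacle).} Near $s=1$ we have $\zeta(s)\zeta(\frac{3-s}{2})=-\frac{2}{(s-1)^2}+\frac{c}{s-1}+O(1)$ for an absolute constant $c$. So the residue is a combination of $g(1)$ and $g'(1)$, where
\[
g(s)=\frac{2\mathcal{G}_S(s,\frac{4k-1-s}{2})h_\pm(s)}{s(s+1)(s-4k+1)(s-4k-1)} .
\]
The danger is that $g'(1)$ might carry a factor $\log B$. It does not: the $s$-dependence of $h_\pm$ enters only through $(1\pm B^{-\varepsilon})^{s+1}$ and $(1\pm B^{-2\varepsilon})^{\frac{4k+1-s}{2}}$. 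Differentiating these produces factors $\log(1\pm B^{-\varepsilon})\ll B^{-\varepsilon}$ and $\log(1\pm B^{-2\varepsilon})\ll B^{-2\varepsilon}$, which are no larger than the differences $(1\pm B^{-\varepsilon})^{s+1}-1\asymp B^{-\varepsilon}$ and $(1\pm B^{-2\varepsilon})^{\frac{4k+1-s}{2}}-1\asymp B^{-2\varepsilon}$ themselves. This is where the cancellation $X^{s+1}Y^{\frac{4k+1-s}{2}}=B^{4k+2}$, specific to the point $(B,B^2)$, is used.

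To bound $g'(1)$ cleanly I would use Cauchy's estimate for $g$ on a circle of fixed radius about $1$ lying inside the strip $15/16<\Re s<9/8$; the bounds $\mathcal{G}_S\ll1$ and $h_\pm\ll B^{4k-1}$ hold on such a circle. This gives $g(1),g'(1)\ll B^{4k-1}$, so the residue is $O(B^{4k-1})$. Combining the three contributions yields $E^{\pm}R_1(B,B^2)=O(B^{4k-1})$.
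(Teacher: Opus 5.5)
Your proof is correct, but it takes a detour that the paper avoids.

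\textbf{What the paper does.} The double pole at $s=1$ already lies to the left of the line $\Re s=\kappa$. So the paper shifts the contour to the \emph{right}, to $\Re s=1+\varepsilon$, which is still inside the strip $15/16<\Re s<9/8$ where $\mathcal{G}_S(s,\frac{4k-1-s}{2})\ll1$. No singularity is crossed, so there is no residue to compute. On the new line $\zeta(s)\ll1$ and $\zeta(\frac{3-s}{2})\ll(1+|\tau|)^{3\varepsilon/2}$. Lemma~\ref{43} bounds the product of differences by $B^{4k+2-3\varepsilon}|(s+1)(s-4k-1)|$, and this integrates directly to $O(B^{4k-1})$.

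\textbf{Why both shifts work.} The shift serves the same purpose in both versions. On $\Re s=\kappa$ itself, $\zeta(s)\zeta(\frac{3-s}{2})$ has size about $(\log B)^2$ near $\tau=0$. Moving to any line a fixed distance from $s=1$ removes this factor. At $(X,Y)=(B,B^2)$ the move costs nothing, because $|X^{s+1}Y^{\frac{4k+1-s}{2}}|=B^{4k+2}$ does not depend on $\Re s$.

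\textbf{What your route costs and buys.} Your leftward shift forces you to handle the double-pole residue. You do this correctly with the Cauchy estimate on a small circle around $s=1$, using $h_\pm\ll B^{4k-1}$ and $\mathcal{G}_S\ll1$ there. In return you get an explicit view of the residue, and hence of the secondary constant if one wanted it. The paper's route is shorter, and Lemma~\ref{zeta} applies verbatim on its horizontal segments, since there $\Re s>1$ and $\Re\frac{3-s}{2}<1$ throughout.

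\textbf{A small citation issue.} On your horizontal segments $\Re s$ runs through $[31/32,\kappa]$ and crosses $1$. There Lemma~\ref{zeta}(1) degenerates, since $\sigma/(\sigma-1)$ blows up as $\sigma\to1^+$, and (3) does not cover $\sigma\geq1$. This is harmless: any standard uniform bound, such as the convexity bound giving $\zeta(\sigma+i\tau)\ll|\tau|^{1/2}$ for $\sigma\geq1/2$, $|\tau|\geq2$, suffices against the $|s|^{-2}$ decay of the remaining factors. You should cite such a bound rather than attribute the step to Lemma~\ref{zeta}.
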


\begin{proof}
Since $B>e^{32}$, we have $\kappa<1+\varepsilon$.
We consider the rectangle
\[
\kappa\leq\Re s\leq1+\varepsilon,\qquad|\Im s|\leq B^{8k}.
\]
We have
\[
\int_{\kappa\pm iB^{8k}}^{1+\varepsilon\pm iB^{8k}}\frac{\zeta(s)\zeta(\frac{3-s}{2})\mathcal{G}_S(s,\frac{4k-1-s}{2})X^{s+1}Y^{\frac{4k+1-s}{2}}}{s(s+1)(s-4k+1)(s-4k-1)}\,\mathrm{d}s=O(1)
\]
by an argument similar to that in the proof of Lemma~\ref{Ipm}.
Hence, we have
\[
R_1(X,Y)=\int_{1+\varepsilon-iB^{8k}}^{1+\varepsilon+iB^{8k}}\frac{\zeta(s)\zeta(\frac{3-s}{2})\mathcal{G}_S(s,\frac{4k-1-s}{2})X^{s+1}Y^{\frac{4k+1-s}{2}}}{s(s+1)(s-4k+1)(s-4k-1)}\,\mathrm{d}s+O(1)
\]
by Cauchy's integral theorem.
By Lemma~\ref{zeta} and Lemma~\ref{43}, we have
\begin{align*}
\zeta(s)\zeta\left(\frac{3-s}{2}\right)\mathcal{G}_S\left(s,\frac{4k-1-s}{2}\right)&\ll(1+|\tau|)^{3\varepsilon/2},\\
((B\pm B^{1-\varepsilon})^{s+1}-B^{s+1})((B^2\pm B^{2-2\varepsilon})^{\frac{4k+1-s}{2}}-B^{4k+1-s})&\ll B^{4k+2-3\varepsilon}|(s+1)(s-4k-1)|.
\end{align*}
Therefore, we obtain
\begin{align*}
&E^{\pm}R_1(B,B^2)
\\
&=\int_{1+\varepsilon-iB^{8k}}^{1+\varepsilon+iB^{8k}}\frac{\zeta(s)\zeta\left(\frac{3-s}{2}\right)\mathcal{G}_S\left(s,\frac{4k-1-s}{2}\right)((B\pm B^{1-\varepsilon})^{s+1}-B^{s+1})((B^2\pm B^{2-2\varepsilon})^{\frac{4k+1-s}{2}}-B^{4k+1-s})}{s(s+1)(s-4k-1)(s-4k+1)B^{3-3\varepsilon}}\,\mathrm{d}s+O(1)
\\
&\ll B^{4k-1}\int_{1+\varepsilon-iB^{8k}}^{1+\varepsilon+iB^{8k}}\frac{(1+|\tau|)^{3\varepsilon/2}}{|s(s-4k+1)|}\,\mathrm{d}s+O(1)
\\
&\ll B^{4k-1}.\qedhere
\end{align*}
\end{proof}

\begin{prop}\label{EjR1}
$E_jR_1(B)=O(B^{4k-1}\phi(B))$.
\end{prop}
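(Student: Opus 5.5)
The plan is to reduce the claim to Proposition~\ref{ER1}, extended from the single point $(B,B^2)$ to all the points $(X,Y)$ appearing in the sums $E_jR_1$. We then sum over the $k_0-1$ indices $l$ with a geometric weight $\delta^{(4k-1)l}$.

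\textbf{Step 1: scaling version of Proposition~\ref{ER1}.} Each $E_jR_1(B)$ is a sum of terms $E^{\pm}R_1(X_l,Y_l)$ with
\[
X_l=\delta^{a}B,\qquad Y_l=\delta^{3b}B^2,\qquad a,b\in\{l-1,l\}.
\]
For $l\leq k_0-1$ we have $\delta^{l}\geq\delta^{k_0-1}\geq B^{-\varepsilon}$. Hence $X_l\geq B^{1-\varepsilon}$ and $Y_l\geq B^{2-3\varepsilon}$, so $(X_l,Y_l)$ lies in the range of Proposition~\ref{Perron}.

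Repeat the proof of Proposition~\ref{ER1} with $(B,B^2)$ replaced by $(X,Y)$:
\begin{enumerate}
\item Shift the contour to $\Re s=1+\varepsilon$; the horizontal pieces are $O(1)$ as before.
\item Apply Lemma~\ref{43} with $\beta=1$ to both differences. This gives
\[
\bigl|(X\pm X^{1-\varepsilon})^{s+1}-X^{s+1}\bigr|\ll X^{s}|s+1|X^{1-\varepsilon}
\]
with $\Re s=1+\varepsilon$, and analogously for $Y$.
\end{enumerate}
Dividing by $X^{1-\varepsilon}Y^{1-\varepsilon}$, the integrand is bounded by
\[
X^{1+\varepsilon}Y^{(4k-1-(1+\varepsilon))/2}\,(1+|\tau|)^{3\varepsilon/2}\,|s(s-4k+1)|^{-1}.
\]
The $\tau$-integral converges. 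Hence
\[
E^{\pm}R_1(X,Y)\ll X^{1+\varepsilon}Y^{2k-1-\varepsilon/2}+O(1).
\]
Substituting $X=\delta^aB$ and $Y=\delta^{3b}B^2$ gives
\[
E^{\pm}R_1(X,Y)\ll \delta^{a(1+\varepsilon)+3b(2k-1-\varepsilon/2)}B^{4k-1}+O(1).
\]
Since $a,b\geq l-1$, this is $\ll \delta^{(l-1)(6k-2)}B^{4k-1}+O(1)$, and $6k-2\geq 4k-1$ for $k\geq1$.

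\textbf{Step 2: summation over $l$.} Summing over $l$ gives
\[
E_jR_1(B)\ll B^{4k-1}\sum_{l\geq1}\delta^{(l-1)(4k-1)}+k_0.
\]
The geometric sum equals
\[
\frac{1}{1-\delta^{4k-1}}\leq\frac{1}{1-\delta}=\phi(B),
\]
using $\delta=1-\phi(B)^{-1}$. Since $k_0=O((\log B)^2)$ is negligible, this gives $E_jR_1(B)=O(B^{4k-1}\phi(B))$.

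\textbf{Main obstacle.} The delicate step is making Step 1 uniform in $(X,Y)$. The implied constants must not depend on $l$, and the bounds for $\zeta(s)$, $\zeta((3-s)/2)$ and $\mathcal{G}_S(s,(4k-1-s)/2)$ on $\Re s=1+\varepsilon$ must be uniform. At $\Re s=1+\varepsilon$ we have $\Re((3-s)/2)=1-\varepsilon/2$, which is handled by Lemma~\ref{zeta}(3) (and (2) for small $|\tau|$) since $\zeta$ has no pole there. The point $w=(4k-1-s)/2$ has $\Re w=2k-1-\varepsilon/2$, which lies in the region where $\mathcal{G}_S\ll1$ by Proposition~\ref{Fzeta}. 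Also $\Re(s+1)$ and $\Re((4k+1-s)/2)$ lie in $[0,3k]$, so Lemma~\ref{43} applies. With these checks, all constants are absolute.
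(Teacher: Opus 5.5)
Your proof is correct and follows essentially the paper's route. Like the paper, you shift to $\Re s=1+\varepsilon$, bound the $\zeta$ and $\mathcal{G}_S$ factors by $(1+|\tau|)^{3\varepsilon/2}$, apply Lemma~\ref{43} with $\beta=1$ to both differences, and sum a geometric series in $\delta$ to produce the factor $\phi(B)$. The only difference is that you bound each $E^{\pm}R_1(X_l,Y_l)$ separately and then sum over $l$, whereas the paper interchanges the $l$-sum with the $s$-integral.

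One harmless slip: from $a,b\ge l-1$ the exponent you actually get is $(l-1)(6k-2-\varepsilon/2)$, not $(l-1)(6k-2)$. Since $6k-2-\varepsilon/2\ge 4k-1$ for $k\geq 1$, Step~2 is unaffected.
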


\begin{proof}
Since the proofs are analogous, we prove only $E_1R_1(B)=O(B^{4k-1}\phi(B))$.
We have
\begin{align*}
&E_1R_1(B)
\\
&=\sum_{l=1}^{k_0-1}\int_{1+\varepsilon-iB^{8k}}^{1+\varepsilon+iB^{8k}}\frac{\zeta(s)\zeta\left(\frac{3-s}{2}\right)\mathcal{G}_S\left(s,\frac{4k-1-s}{2}\right)}{s(s+1)(s-4k+1)(s-4k-1)\delta^{(4l-1)(1-\varepsilon)}B^{3-3\varepsilon}}
\\
&\qquad((\delta^{l-1}B+\delta^{(l-1)(1-\varepsilon)}B^{1-\varepsilon})^{s+1}-(\delta^{l-1}B)^{s+1})((\delta^{3l}B^2+\delta^{3l(1-\varepsilon)}B^{2(1-\varepsilon)})^{\frac{4k+1-s}{2}}-(\delta^{3l}B^2)^{\frac{4k+1-s}{2}})\,\mathrm{d}s
\\
&=\int_{1+\varepsilon-iB^{8k}}^{1+\varepsilon+iB^{8k}}\frac{\zeta(s)\zeta(\frac{3-s}{2})\mathcal{G}_S(s,\frac{4k-1-s}{2})B^{s+1}B^{4k+1-s}}{s(s+1)(s-4k+1)(s-4k-1)B^{3-3\varepsilon}}
\\
&\qquad\sum_{l=1}^{k_0-1}\frac{\delta^{(l-1)(s+1)}\delta^{\frac{3l(4k+1-s)}{2}}}{\delta^{4(l-1)(1-\varepsilon)}}B^{4k+2}((1+\delta^{-\varepsilon(l-1)}B^{-\varepsilon})^{s+1}-1)((1+\delta^{-3l\varepsilon}B^{-2\varepsilon})^{\frac{4k+1-s}{2}}-1)\,\mathrm{d}s.
\end{align*}

By Lemma~\ref{zeta} and Lemma~\ref{43}, we have
\begin{align*}
\zeta(s)\zeta\left(\frac{3-s}{2}\right)\mathcal{G}_S\left(s,\frac{4k-1-s}{2}\right)&\ll(|\tau|+1)^{3\varepsilon/2},
\\
((1+\delta^{-\varepsilon(l-1)}B^{-\varepsilon})^{s+1}-1)((1+\delta^{-3l\varepsilon}B^{-2\varepsilon})^{\frac{4k+1-s}{2}}-1)&\ll|(s+1)(s-4k-1)|\delta^{-4l\varepsilon-\varepsilon}B^{-3\varepsilon}
\end{align*}
for $1\leq l\leq k_0-1$, $\Re s=1+\varepsilon$, and $\tau=\Re s$.
Therefore, we obtain
\begin{align*}
&E_1R_1(B)
\\
&\ll\int_{1+\varepsilon-iB^{8k}}^{1+\varepsilon+iB^{8k}}\frac{(|\tau|+1)^{\frac{3}{2}\varepsilon}B^{4k-1+3\varepsilon}}{|s(s+1)(s-4k+1)(s-4k-1)|}B^{4k-1+3\varepsilon}
\\
&\qquad\sum_{l=1}^{k_0-1}\left|\delta^{(l-1)(s+1)+\frac{3l(4k+1-s)}{2}-4(l-1)(1-\varepsilon)}(s+1)(s-4k-1)\delta^{-4l\varepsilon-\varepsilon}B^{-3\varepsilon}\right|\,\mathrm{d}s
\\
&\ll B^{4k-1}\int_{1+\varepsilon-iB^{8k}}^{1+\varepsilon+iB^{8k}}\frac{(|\tau|+1)^{\frac{3}{2}\varepsilon}}{|s(s-4k+1)|}\sum_{l=1}^{k_0-1}\left|\delta^{-\frac{1}{2}ls-\frac{3}{2}l-s+3+6lk-5\varepsilon}\right|\,\mathrm{d}s
\\
&=B^{4k-1}\delta^{2-6\varepsilon}\sum_{l=1}^{k_0-1}\delta^{\frac{12k-5-\varepsilon}{2}l}\int_{1+\varepsilon-iB^{8k}}^{1+\varepsilon+iB^{8k}}\frac{(|\tau|+1)^{\frac{3}{2}\varepsilon}}{|s(s-4k+1)|}\,\mathrm{d}s
\\
&\ll B^{4k-1}\phi(B),
\end{align*}
where the last estimate follows from
\[
\sum_{l=1}^{k_0-1}\delta^{\frac{12k-5-\varepsilon}{2}l}=O(\phi(B)),\qquad\int_{1+\varepsilon-iB^{8k}}^{1+\varepsilon+iB^{8k}}\frac{(|\tau|+1)^{\frac{3}{2}\varepsilon}}{|s(s-4k+1)|}\,\mathrm{d}s=O(1).\qedhere
\]
\end{proof}

Next, we calculate
\[
R_2(X,Y)=\int_{\kappa-iB^{8k}}^{\kappa+iB^{8k}}\frac{3\zeta(s)\zeta(\frac{s+2}{3})\mathcal{G}_S(s,\frac{6k-2-s}{3})X^{s+1}Y^{\frac{6k+1-s}{3}}}{s(s+1)(s-6k+2)(s-6k-1)}\,\mathrm{d}s.
\]
This gives the main term of the asymptotic formula for $M(X,Y)$.

\begin{prop}\label{R2}
There exist a real number $c'$ and a function $R_3\colon\mathbb{R}_{>0}^2\to\mathbb{C}$ such that
\begin{align*}
R_2(X,Y)&=2\pi i\left(\frac{\mathcal{G}_S(1,2k-1)}{12k(2k-1)}X^2Y^{2k}\left(\log X-\frac{1}{3}\log Y\right)+c'X^2Y^{2k}\right)+R_3(X,Y)+O(1),
\\
E^{\pm}R_3(X,Y)&\ll B^{4k-1-\varepsilon/3}.
\end{align*}
\end{prop}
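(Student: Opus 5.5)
The plan is to shift the $s$-contour in $R_2$ to the left of $s=1$. The double pole there produces the main term $X^2Y^{2k}(\log X-\frac13\log Y)$, and the shifted integral is what we call $R_3$.

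\textbf{Step 1: the contour shift.} Consider the rectangle
\[
1-\varepsilon\le\Re s\le\kappa,\qquad |\Im s|\le B^{8k}.
\]
On it the point $(s,\frac{6k-2-s}{3})$ stays in the region where Proposition~\ref{Fzeta} gives $\mathcal{G}_S\ll1$. Indeed $\Re s\ge 15/16$, and $\Re\frac{6k-2-s}{3}=2k-\frac{2+\Re s}{3}\ge 2k-\frac{17}{16}$ because $\Re s\le\kappa<1+\varepsilon$. The only singularity of the integrand inside the rectangle is at $s=1$. There $\zeta(s)$ has a simple pole with residue $1$, and $\zeta(\frac{s+2}{3})$ has a simple pole with residue $3$, so the pole is double. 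The two horizontal segments are $O(1)$, exactly as in Proposition~\ref{Ipm}: $X^{s+1}Y^{\frac{6k+1-s}{3}}\ll B^{4k+3}$, the zeta factors are polynomially bounded by Lemma~\ref{zeta}, and the denominator is $\gg B^{32k}$. Cauchy's theorem then gives
\[
R_2(X,Y)=2\pi i\,\mathrm{Res}_{s=1}+R_3(X,Y)+O(1),
\]
where $R_3$ is the same integral taken over $\Re s=1-\varepsilon$, $|\Im s|\le B^{8k}$.

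\textbf{Step 2: the residue.} Write the integrand as $(s-1)^{-2}h(s)X^{s+1}Y^{\frac{6k+1-s}{3}}$ with $h$ holomorphic near $1$. The residue is then
\[
h(1)X^2Y^{2k}\Bigl(\log X-\tfrac13\log Y\Bigr)+h'(1)X^2Y^{2k}.
\]
The value $h(1)$ comes from the factor $3$ in front, the residues $1\cdot3$ of the zeta factors, $\mathcal{G}_S(1,2k-1)$, and the denominator value $s(s+1)(s-6k+2)(s-6k-1)\big|_{s=1}$. This should match the stated coefficient $\mathcal{G}_S(1,2k-1)/(12k(2k-1))$. I have not carried out this normalisation check; it is the first thing I would verify, and if the coefficient comes out differently it would have to be reconciled with the definition of $R_2$. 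We take $c'=h'(1)$. It is real because $\mathcal{G}_S$, $\zeta$ and the rational factors take real values for real arguments, so their Taylor coefficients at $s=1$ are real.

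\textbf{Step 3: bounding $E^{\pm}R_3$.} On $\Re s=1-\varepsilon$ we have:
\begin{itemize}
\item by Lemma~\ref{zeta}(2) (or (3) for $|\tau|\ge1$), $\zeta(s)\ll(1+|\tau|)^{3\varepsilon/2}$;
\item $\zeta(\frac{s+2}{3})\ll1$, since $\Re\frac{s+2}{3}=1-\varepsilon/3$ stays at distance $\asymp1$ from the pole when $|\tau|$ is large, and the small-$|\tau|$ region is compact;
\item $\mathcal{G}_S\ll1$.
\end{itemize}
Since $E^{\pm}$ is linear, we apply it under the integral sign. Lemma~\ref{43} with $\beta=1$ and $H=X^{1-\varepsilon}$ gives
\[
(X\pm X^{1-\varepsilon})^{s+1}-X^{s+1}\ll |s+1|\,X^{1-\varepsilon}\cdot X^{\Re s}=|s+1|\,X^{2-2\varepsilon}.
\]
The same lemma, applied to the exponent $\frac{6k+1-s}{3}$, whose real part is $2k+\varepsilon/3$, gives
\[
(Y\pm Y^{1-\varepsilon})^{\frac{6k+1-s}{3}}-Y^{\frac{6k+1-s}{3}}\ll |s-6k-1|\,Y^{1-\varepsilon}\cdot Y^{2k-1+\varepsilon/3}.
\]
Dividing by $X^{1-\varepsilon}Y^{1-\varepsilon}$ leaves
\[
|(s+1)(s-6k-1)|\,X^{1-\varepsilon}Y^{2k-1+\varepsilon/3}\le|(s+1)(s-6k-1)|\,B^{1-\varepsilon}B^{4k-2+2\varepsilon/3}=|(s+1)(s-6k-1)|\,B^{4k-1-\varepsilon/3}.
\]
The factors $|(s+1)(s-6k-1)|$ cancel two of the four linear factors in the denominator. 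What remains is
\[
\int\frac{(1+|\tau|)^{3\varepsilon/2}}{|s(s-6k+2)|}\,\mathrm{d}\tau=O(1),
\]
which yields $E^{\pm}R_3(X,Y)\ll B^{4k-1-\varepsilon/3}$.

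\textbf{Expected obstacle.} I expect the hardest part to be the bookkeeping in Step 3. The shift $1-\varepsilon$ must be chosen so that three things hold at once: $(s,w)$ stays inside the domain of Proposition~\ref{Fzeta}, the zeta bounds of Lemma~\ref{zeta} apply, and the saving in $X$ (namely $X^{-\varepsilon}$) outweighs the loss $Y^{\varepsilon/3}\le B^{2\varepsilon/3}$. This produces the exponent $-\varepsilon/3$. A secondary point is to state the bound uniformly for $X\le 2B$ and $Y\le 2B^2$, so that it can later be applied at the shifted arguments appearing in $E_j$.
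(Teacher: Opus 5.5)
Your route is the paper's: shift to $\Re s=1-\varepsilon$, pick up the double pole at $s=1$, discard the horizontal segments as in Proposition~\ref{Ipm}, and bound $E^{\pm}R_3$ via Lemma~\ref{43} with $\beta=1$. There, $|(s+1)(s-6k-1)|$ cancels two denominator factors and $X^{1-\varepsilon}Y^{2k-1+\varepsilon/3}\le B^{4k-1-\varepsilon/3}$. Two points need repair.

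\textbf{The bound on $\zeta(\frac{s+2}{3})$.} Your claim that $\zeta(\frac{s+2}{3})\ll 1$ on $\Re s=1-\varepsilon$ is false. Here $\Re\frac{s+2}{3}=1-\varepsilon/3<1$, and $\zeta$ is unbounded on every vertical line $\Re z=\sigma_0$ with $\frac12<\sigma_0\le 1$. Staying a fixed distance from the pole does not control $\zeta$ inside the critical strip. Lemma~\ref{zeta}(2) does not apply either, since $1-\varepsilon/3>1-\varepsilon/2$. Lemma~\ref{zeta}(3), together with compactness for $|\tau|<3$, does give $\zeta(\frac{s+2}{3})\ll(1+|\tau|)^{7\varepsilon/6}$. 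The arithmetic factor is therefore $\ll(1+|\tau|)^{8\varepsilon/3}$, which is exactly the paper's bound. Since $8\varepsilon/3<1$ and $|s(s-6k+2)|^{-1}\asymp(1+|\tau|)^{-2}$, the $\tau$-integral is still $O(1)$, so your conclusion survives.

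\textbf{The residue coefficient.} The normalisation check you deferred is what gives the first identity its content, and it does not come out as stated. With $R_2$ as written in Definition~\ref{IR},
\[
h(1)=\frac{3\cdot 1\cdot 3\cdot\mathcal{G}_S(1,2k-1)}{1\cdot 2\cdot(3-6k)\cdot(-6k)}=\frac{\mathcal{G}_S(1,2k-1)}{4k(2k-1)},
\]
which is three times the stated coefficient. The inconsistency is in the paper. The factor $3$ from the $w$-residue is placed inside $R_2$ in Definition~\ref{IR}, and it is counted again in Proposition~\ref{MIR} through $6\pi i R_2=2\pi i\cdot 3R_2$; the factor $2$ in $R_1$ is treated the same way. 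The paper's own proof keeps the $3$ in $R_3$ but quotes the residue with $12k(2k-1)$. The statement is correct, and consistent with the constant in Proposition~\ref{SS}, if $R_2$ denotes the integral \emph{without} the leading $3$; then $h(1)=\mathcal{G}_S(1,2k-1)/(12k(2k-1))$. You need to fix this normalisation explicitly. Alternatively, keep the $3$ and correct both the coefficient and Proposition~\ref{MIR}. Once that is done, $c'=h'(1)$ (real, as you argue) completes the proof; only the existence of $c'$ is used later, so its explicit value is irrelevant.
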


\begin{proof}
We set
\begin{align*}
c'&=\left.\left(\frac{\mathcal{G}_S(s,\frac{6k-2-s}{3})}{4s(s+1)(s-6k+2)(s-6k-1)}\right)'\right|_{s=1}+\frac{2\gamma\mathcal{G}_S(1,2k-1)}{9k(2k-1)},
\\
R_3(X,Y)&=\int_{1-\varepsilon-iB^{8k}}^{1-\varepsilon+iB^{8k}}\frac{3\zeta(s)\zeta(\frac{s+2}{3})\mathcal{G}_S(s,\frac{6k-2-s}{3})X^{s+1}Y^{\frac{6k+1-s}{3}}}{s(s+1)(s-6k+2)(s-6k-1)}\,\mathrm{d}s,
\end{align*}
where $\gamma$ is Euler's constant.
Let us consider the rectangle
\[1-\varepsilon\leq\Re s\leq\kappa,\qquad|\Im s|\leq B^{8k}.
\]
The integrand has a pole at $s=1$ of order 2 and the residue is
\[
\frac{\mathcal{G}_S(1,2k-1)}{12k(2k-1)}X^2Y^{2k}\left(\log X-\frac{1}{3}\log Y\right)+c'X^2Y^{2k}.
\]
Hence, we obtain
\begin{align*}
R_2(X,Y)&=2\pi i\left(\frac{\mathcal{G}_S(1,2k-1)}{12k(2k-1)}X^2Y^{2k}\left(\log X-\frac{1}{3}\log Y\right)+c'X^2Y^{2k}\right)+R_3(X,Y)
\\
&+\int_{1-\varepsilon+iB^{8k}}^{\kappa+iB^{8k}}\frac{3\zeta(s)\zeta(\frac{s+2}{3})\mathcal{G}_S(s,\frac{6k-2-s}{3})X^{s+1}Y^{\frac{6k+1-s}{3}}}{s(s+1)(s-6k+2)(s-6k-1)}\,\mathrm{d}s
\\
&-\int_{1-\varepsilon-iB^{8k}}^{\kappa-iB^{8k}}\frac{3\zeta(s)\zeta(\frac{s+2}{3})\mathcal{G}_S(s,\frac{6k-2-s}{3})X^{s+1}Y^{\frac{6k+1-s}{3}}}{s(s+1)(s-6k+2)(s-6k-1)}\,\mathrm{d}s.
\end{align*}
As in the proof of Lemma~\ref{Ipm}, we obtain
\[
\int_{1-\varepsilon\pm iB^{8k}}^{\kappa\pm iB^{8k}}\frac{3\zeta(s)\zeta(\frac{s+2}{3})\mathcal{G}_S(s,\frac{6k-2-s}{3})X^{s+1}Y^{\frac{6k+1-s}{3}}}{s(s+1)(s-6k+2)(s-6k-1)}\,\mathrm{d}s=O(1).
\]
By Lemma~\ref{zeta} and Lemma~\ref{43}, for $\Re s=1-\varepsilon$, we have
\begin{align*}
\zeta(s)\zeta\left(\frac{s+2}{3}\right)\mathcal{G}_S\left(s,\frac{6k-2-s}{3}\right)&\ll(|\tau|+1)^{8\varepsilon/3},
\\
((X\pm X^{1-\varepsilon})^{s+1}-X^{s+1})((Y\pm Y^{1-\varepsilon})^{\frac{6k+1-s}{3}}-Y^{\frac{6k+1-s}{3}})&\ll X^{1-\varepsilon}Y^{1-\varepsilon}|(s+1)(s-6k-1)|B^{4k-1-\varepsilon/3},
\end{align*}
where $\tau=\Im s$.
Therefore, we obtain
\begin{align*}
&E^{\pm}R_3(X,Y)
\\
&=\int_{1-\varepsilon-iB^{8k}}^{1-\varepsilon+iB^{8k}}\frac{\zeta(s)\zeta(\frac{s+2}{3})\mathcal{G}_S(s,\frac{6k-2-s}{3})((X\pm X^{1-\varepsilon})^{s+1}-X^{s+1})((Y\pm Y^{1-\varepsilon})^{\frac{6k+1-s}{3}}-Y^{\frac{6k+1-s}{3}})}{s(s+1)(s-6k+2)(s-6k-1)X^{1-\varepsilon}Y^{1-\varepsilon}}\,\mathrm{d}s
\\
&\ll\int_{1-\varepsilon-iB^{8k}}^{1-\varepsilon+iB^{8k}}\frac{(|\tau|+1)^{8\varepsilon/3}B^{4k-1-\varepsilon/3}}{|s(s-6k+2)|}\,\mathrm{d}s
\\
&\ll B^{4k-1-\varepsilon/3}.\qedhere
\end{align*}
\end{proof}

Finally, we establish asymptotic formulae for $S(B,B^2)$ and $T(B)$.
We define a function $g\colon\mathbb{R}_{>0}^2\to\mathbb{R}$ by
\[
g(X,Y)=\frac{\mathcal{G}_S(1,2k-1)}{12k(2k-1)}X^2Y^{2k}\left(\log X-\frac{1}{3}\log Y\right)+c'X^2Y^{2k}.
\]
The functions $E^{\pm}g$ and $E_jg$ can be evaluated by the following lemma.

\begin{lem}\label{3}\cite[Lemma~3]{dlB98}
Let $f\colon\mathbb{R}_{>0}^2\to\mathbb{R}$ be a $C^2$ function.
For $0<H\leq X$ and $0<J\leq Y$, we have
\begin{align*}
(\mathscr{D}f)(X,X+H;Y,Y+J)&=HJ\left(\frac{\partial^2f}{\partial x\partial y}(X,Y)+O(R_1(X,H,Y,J))\right),
\\
(\mathscr{D}f)(X-H,X;Y-J,Y)&=HJ\left(\frac{\partial^2f}{\partial x\partial y}(X,Y)+O(R_2(X,H,Y,J))\right),
\end{align*}
where 
\begin{align*}
R_1(X,H,Y,J)&=H\max_{X\leq x\leq X+H,Y\leq y\leq Y+J}\left|\frac{\partial^3f}{\partial x^2\partial y}(x,y)\right|+J\max_{X\leq x\leq X+H,Y\leq y\leq Y+J}\left|\frac{\partial^3f}{\partial x\partial y^2}(x,y)\right|,
\\
R_2(X,H,Y,J)&=H\max_{X-H\leq x\leq X,Y-J\leq y\leq Y}\left|\frac{\partial^3f}{\partial x^2\partial y}(x,y)\right|+J\max_{X-H\leq x\leq X,Y-J\leq y\leq Y}\left|\frac{\partial^3f}{\partial x\partial y^2}(x,y)\right|.
\end{align*}
\end{lem}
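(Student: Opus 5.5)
We prove Lemma~\ref{3} by expressing the mixed difference $\mathscr{D}f$ as a double integral of the mixed partial derivative and then Taylor-expanding that derivative about the corner point. We treat the first identity; the second follows by the same argument applied on the rectangle $[X-H,X]\times[Y-J,Y]$.

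First, by the fundamental theorem of calculus applied twice (valid since $f$ is $C^2$, and assuming the third partials exist and are continuous wherever $R_1,R_2$ are finite), we have
\[
(\mathscr{D}f)(X,X+H;Y,Y+J)=\int_X^{X+H}\int_Y^{Y+J}\frac{\partial^2f}{\partial x\partial y}(x,y)\,\mathrm{d}y\,\mathrm{d}x .
\]
The four terms in the definition of $\mathscr{D}f$ are exactly the signed corner values of $f$ produced by integrating $\partial_y$ and then $\partial_x$.

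Next, for $(x,y)$ in the rectangle we write
\[
\frac{\partial^2f}{\partial x\partial y}(x,y)-\frac{\partial^2f}{\partial x\partial y}(X,Y)
=\left(\frac{\partial^2f}{\partial x\partial y}(x,y)-\frac{\partial^2f}{\partial x\partial y}(X,y)\right)+\left(\frac{\partial^2f}{\partial x\partial y}(X,y)-\frac{\partial^2f}{\partial x\partial y}(X,Y)\right).
\]
The mean value theorem bounds the first bracket by $|x-X|\max|\partial^3f/\partial x^2\partial y|\le H\max|\partial^3f/\partial x^2\partial y|$. It bounds the second by $J\max|\partial^3f/\partial x\partial y^2|$. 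In both cases the maxima are taken over the rectangle $[X,X+H]\times[Y,Y+J]$.

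Integrating over the rectangle, which has area $HJ$, gives $HJ\bigl(\partial^2 f/\partial x\partial y(X,Y)+O(R_1)\bigr)$ with implied constant $1$. This argument is routine and has no real obstacle. The only point needing care is the regularity hypothesis: $f$ must be $C^3$ on the relevant rectangle for $R_1$ and $R_2$ to make sense. This holds for the function $g$ to which we will apply the lemma, since $g$ is smooth on $\mathbb{R}_{>0}^2$.
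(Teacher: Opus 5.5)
Your proof is correct. It is the standard argument: write $\mathscr{D}f$ as the integral of $\partial^2 f/\partial x\partial y$ over the rectangle, then apply the mean value theorem in each variable, which even gives implied constant $1$. The paper gives no proof of its own and simply cites \cite[Lemma~3]{dlB98}.

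You are right that $f$ must be $C^3$ for $R_1,R_2$ to make sense. For the second identity one also needs $H<X$ and $J<Y$, so that the rectangle stays inside $\mathbb{R}_{>0}^2$. Both conditions are harmless for the smooth function $g$ with $H=X^{1-\varepsilon}$, $J=Y^{1-\varepsilon}$.
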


We have the following statement by applying Lemma~\ref{3} to $g$.

\begin{co}\label{Epmg}
There is $c''\in\mathbb{R}$ such that
\[
E^{\pm}g(X,Y)=\frac{\mathcal{G}_S(1,2k-1)}{3(2k-1)}XY^{2k-1}\left(\log X-\frac{1}{3}\log Y\right)+c''XY^{2k-1}+O(X^{1-\varepsilon/6}Y^{2k-1}).
\]
In particular, we have
\[
E^{\pm}g(B,B^2)=\frac{\mathcal{G}_S(1,2k-1)}{9(2k-1)}B^{4k-1}\log B+O(B^{4k-1})\quad(B\to\infty).
\]
\end{co}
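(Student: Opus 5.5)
The plan is to apply Lemma~\ref{3} directly to $g$, with $H=X^{1-\varepsilon}$ and $J=Y^{1-\varepsilon}$.

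\emph{Reduction to Lemma~\ref{3}.} For $E^{+}g$ we use the first identity of Lemma~\ref{3}. For $E^{-}g$ we first note that $\mathscr{D}f(X_1,X_2;Y_1,Y_2)$ is unchanged when both pairs $(X_1,X_2)$ and $(Y_1,Y_2)$ are swapped simultaneously. Hence
\[
\mathscr{D}g(X,X-H;Y,Y-J)=\mathscr{D}g(X-H,X;Y-J,Y),
\]
and the second identity of Lemma~\ref{3} applies. In both cases $E^{\pm}g(X,Y)$ equals $\partial_x\partial_y g(X,Y)$ plus a remainder $O(R_i(X,H,Y,J))$. The hypotheses $0<H\le X$ and $0<J\le Y$ hold once $X,Y\ge1$.

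\emph{The mixed derivative.} Write $a=\mathcal{G}_S(1,2k-1)/(12k(2k-1))$. Then
\[
\partial_x g=aXY^{2k}\left(2\log X-\tfrac{2}{3}\log Y+1\right)+2c'XY^{2k},
\]
and differentiating in $y$ gives
\[
\partial_x\partial_y g=4kaXY^{2k-1}\left(\log X-\tfrac13\log Y\right)+\left(a\left(2k-\tfrac23\right)+4kc'\right)XY^{2k-1}.
\]
Since $4ka=\mathcal{G}_S(1,2k-1)/(3(2k-1))$, this is exactly the claimed main term, with $c''=a(2k-\frac23)+4kc'$, which is real because $c'$ is real.

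\emph{The remainder.} This is the only step needing care. On the box of side $H,J$ around $(X,Y)$ we have $x\asymp X$ and $y\asymp Y$, so
\[
\partial_x^2\partial_y g\ll Y^{2k-1}\log(XY),\qquad \partial_x\partial_y^2 g\ll XY^{2k-2}\log(XY).
\]
Multiplying by $H$ and $J$ respectively bounds the remainder by
\[
\left(X^{-\varepsilon}+Y^{-\varepsilon}\right)XY^{2k-1}\log(XY).
\]
To reach $O(X^{1-\varepsilon/6}Y^{2k-1})$ we must absorb the logarithm and the $Y^{-\varepsilon}$ term. This works in the ranges where the corollary is used, namely $B^{1-\varepsilon}\le X\le B$ and $B^{2-3\varepsilon}\le Y\le B^2$, with the same implicit range convention as Propositions~\ref{I1}--\ref{R2}. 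There $Y\ge X$ and $\log(XY)\ll\log X\ll X^{\varepsilon/2}$, so the remainder is $\ll X^{1-\varepsilon/2}Y^{2k-1}\le X^{1-\varepsilon/6}Y^{2k-1}$. The same bound serves for $E^{-}$, since on $[X-H,X]\times[Y-J,Y]$ we still have $x\asymp X$ and $y\asymp Y$.

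\emph{Specialization.} Put $X=B$ and $Y=B^2$. Then $\log X-\frac13\log Y=\frac13\log B$ and $XY^{2k-1}=B^{4k-1}$. The leading term becomes $\frac{\mathcal{G}_S(1,2k-1)}{9(2k-1)}B^{4k-1}\log B$. The term $c''XY^{2k-1}$ and the error $O(B^{4k-1-\varepsilon/6})$ are both $O(B^{4k-1})$.
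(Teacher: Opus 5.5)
Your proposal is correct and follows the paper's route: the paper's entire justification is ``apply Lemma~\ref{3} to $g$'', and you carry this out with $H=X^{1-\varepsilon}$, $J=Y^{1-\varepsilon}$, computing $\partial_x\partial_y g$ and bounding the third-derivative remainder. You also make explicit the range $B^{1-\varepsilon}\le X\le B$, $B^{2-3\varepsilon}\le Y\le B^2$, where the corollary is actually applied. The paper leaves this range implicit, but the error term $O(X^{1-\varepsilon/6}Y^{2k-1})$ genuinely needs it, so it is right to state it.
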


The functions $E^{\pm}g$ give an asymptotic formula for $S(B,B^2)$.

\begin{prop}\label{SS}
The asymptotic formula
\[
S(B,B^2)=\frac{\mathcal{G}_S(1,2k-1)}{3(2k-1)}B^{4k-1}\log B+O(B^{4k-1})\quad(B\to\infty)
\]
holds.
\end{prop}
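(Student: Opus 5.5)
We sandwich $S(B,B^2)$ between $E^{-}M(B,B^2)$ and $E^{+}M(B,B^2)$ by Lemma~\ref{EMS}, and evaluate both sides using the decomposition of $M$ from Proposition~\ref{MIR}. The points $(X,Y)$ entering $E^{\pm}M(B,B^2)$ are $B$, $B\pm B^{1-\varepsilon}$ and $B^2$, $B^2\pm B^{2-2\varepsilon}$. These lie in the ranges $B^{1-\varepsilon}/2\le X\le 2B$ and $B^{2-3\varepsilon}/2\le Y\le 2B^2$, so Proposition~\ref{MIR} applies at all four corners. Note that $E^{\pm}$ uses the increment $Y^{1-\varepsilon}=B^{2-2\varepsilon}$, so the denominator is $B^{1-\varepsilon}B^{2-2\varepsilon}=B^{3-3\varepsilon}$. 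Applying the linear operator $E^{\pm}$ gives
\[
E^{\pm}M(B,B^2)=\frac{E^{\pm}I_1+E^{\pm}I_{+}-E^{\pm}I_{-}+4\pi i\,E^{\pm}R_1+6\pi i\,E^{\pm}R_2}{(2\pi i)^2}(B,B^2)+O(B^{-3+3\varepsilon}),
\]
since the $O(1)$ terms of Proposition~\ref{MIR} at the four corners contribute $O(1)/B^{3-3\varepsilon}$ after division.

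We bound each piece in turn.
\begin{itemize}
\item By Proposition~\ref{Ipm}, $E^{\pm}I_{\pm}(B,B^2)=O(1)$.
\item By Proposition~\ref{I1}, with $X=B$ and $Y=B^2$ in the admissible range (the proof only uses upper bounds $X\ll B$, $Y\ll B^2$, so the boundary case is fine), $E^{\pm}I_1(B,B^2)\ll B^{4k-1-2\varepsilon+13\varepsilon^2}\log B=O(B^{4k-1})$, because $13\varepsilon^2<2\varepsilon$ when $\varepsilon=1/32$.
\item By Proposition~\ref{ER1}, $E^{\pm}R_1(B,B^2)=O(B^{4k-1})$.
\item By Proposition~\ref{R2}, $R_2=2\pi i\,g+R_3+O(1)$. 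Hence $E^{\pm}R_2(B,B^2)=2\pi i\,E^{\pm}g(B,B^2)+E^{\pm}R_3(B,B^2)+O(1)$, with $E^{\pm}R_3(B,B^2)\ll B^{4k-1-\varepsilon/3}$.
\end{itemize}

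Collecting terms, the only contribution of size $B^{4k-1}\log B$ comes from $R_2$:
\[
\frac{6\pi i\cdot 2\pi i}{(2\pi i)^2}\,E^{\pm}g(B,B^2)=3E^{\pm}g(B,B^2).
\]
Corollary~\ref{Epmg} gives
\[
E^{\pm}g(B,B^2)=\frac{\mathcal G_S(1,2k-1)}{9(2k-1)}B^{4k-1}\log B+O(B^{4k-1}),
\]
so
\[
E^{\pm}M(B,B^2)=\frac{\mathcal G_S(1,2k-1)}{3(2k-1)}B^{4k-1}\log B+O(B^{4k-1})
\]
for both signs. The sandwich then yields the claimed formula for $S(B,B^2)$.

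The main point requiring care is bookkeeping rather than new analysis. First, the factors $4\pi i$ and $6\pi i$ from Proposition~\ref{MIR} must combine with $(2\pi i)^{-2}$ so that the leading constant is real and equal to $3\cdot\frac{1}{9}=\frac13$ times $\mathcal G_S(1,2k-1)/(2k-1)$. Second, the error terms must be checked at the specific point $(B,B^2)$: Corollary~\ref{Epmg} at $(B,B^2)$ gives $\frac{\mathcal G_S}{3(2k-1)}B^{4k-1}(\log B-\frac23\log B)$, which is consistent. Third, we must confirm that $E^{-}$ and $E^{+}$ give the same main term, which holds because $\partial^2 g/\partial x\partial y$ varies by only a relative $O(B^{-\varepsilon})$ over the shifted boxes.
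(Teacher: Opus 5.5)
Your proposal is correct and follows the paper's proof essentially step for step. You sandwich $S(B,B^2)$ between $E^{-}M(B,B^2)$ and $E^{+}M(B,B^2)$ via Lemma~\ref{EMS}, expand $M$ with Propositions~\ref{MIR} and~\ref{R2} so that the main term is $3E^{\pm}g(B,B^2)$, and bound the remaining pieces with Propositions~\ref{Ipm}, \ref{I1}, \ref{ER1}, \ref{R2} and Corollary~\ref{Epmg}. Your extra bookkeeping only makes explicit what the paper leaves implicit: the four corners lie in the admissible range, the $O(1)$ terms become $O(B^{-3+3\varepsilon})$ after division, and $6\pi i\cdot 2\pi i/(2\pi i)^2=3$.
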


\begin{proof}
By Lemma~\ref{EMS}, Proposition~\ref{MIR}, and Proposition~\ref{R2}, we have
\begin{align*}
&S(B,B^2)
\\
&\leq E^+M(B,B^2)
\\
&=\frac{(E^+I_1+E^+I_+-E^+I_-+4\pi iE^+R_1+6\pi i E^+R_3)(B,B^2)}{(2\pi i)^2}+3E^+g(B,B^2).
\end{align*}
Similarly, we have
\[
S(B,B^2)\geq\frac{(E^-I_1+E^-I_+-E^-I_-+4\pi iE^-R_1+6\pi i E^-R_3)(B,B^2)}{(2\pi i)^2}+3E^-g(B,B^2).
\]
By Corollary~\ref{Cor}, Proposition~\ref{MIR}, Proposition~\ref{Ipm}, Proposition~\ref{I1}, Proposition~\ref{ER1}, Proposition~\ref{EjR1}, and Proposition~\ref{R2}, we obtain
\begin{align*}
&S(B,B^2)-\frac{\mathcal{G}_S(1,2k-1)}{3(2k-1)}B^{4k-1}\log B
\\
&=O\left(1+B^{4k-1-2\varepsilon+13\varepsilon^2}\log B+B^{4k-1}+B^{4k-1-\varepsilon/3}+B^{4k-1}\right)
\\
&=O(B^{4k-1}).\qedhere
\end{align*}
\end{proof}

The functions $E_jg$ give an asymptotic formula for $T(B)$.

\begin{prop}
The asymptotic formula
\[
T(B)=\frac{\mathcal{G}_S(1,2k-1)}{6(2k-1)(3k-1)}B^{4k-1}\log B+O(B^{4k-1}\phi(B))\quad(B\to\infty)
\]
holds.
\end{prop}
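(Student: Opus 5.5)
Starting from Corollary~\ref{Cor}, $T(B)$ is squeezed between $E_1M(B)-E_2M(B)$ and $E_3M(B)-E_4M(B)$, up to $O(B^{3-4\varepsilon}(\log B)^{30})$; this error is $O(B^{4k-1})$ because $3-4\varepsilon<4k-1$. Each point at which $E_j$ evaluates $M$ has the form $(\delta^{l}B,\delta^{3l'}B^2)$ with $l,l'\le k_0$, so $\delta^{k_0}\ge\delta B^{-\varepsilon}$ places it in the range of Proposition~\ref{MIR}. Inserting the decomposition of Proposition~\ref{MIR}, with $R_2$ split as in Proposition~\ref{R2} into $2\pi i\,g$ plus $R_3$, gives for each $j$
\[
E_jM(B)=3E_jg(B)+O\bigl(|E_jI_1|+|E_jI_\pm|+|E_jR_1|+|E_jR_3|+(\log B)^2\bigr).
\]
The $O(1)$ terms of Proposition~\ref{MIR} contribute $O(k_0)=O((\log B)^2)$. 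The error terms are bounded as follows. Proposition~\ref{Ipm} handles $E_jI_\pm$, and Proposition~\ref{I1} handles $E_jI_1$, which is $O(B^{4k-1})$ since $-2\varepsilon+13\varepsilon^2<0$. Proposition~\ref{EjR1} gives $E_jR_1=O(B^{4k-1}\phi(B))$, which is the source of the $\phi(B)$ in the error term. For $E_jR_3$, summing the bound $E^\pm R_3\ll B^{4k-1-\varepsilon/3}$ of Proposition~\ref{R2} over $k_0=O((\log B)^2)$ values of $l$ gives $O(B^{4k-1})$. The bound in Proposition~\ref{R2} is stated for the range of $X,Y$ in question, and its proof depends only on $X\le 2B$, $Y\le 2B^2$.

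The main term comes from $E_jg$, which I would compute with Corollary~\ref{Epmg}. Write $C=\mathcal{G}_S(1,2k-1)/(3(2k-1))$. At $(X,Y)=(\delta^{a}B,\delta^{3b}B^2)$ we have $XY^{2k-1}=\delta^{a+3b(2k-1)}B^{4k-1}$ and $\log X-\frac13\log Y=\frac13\log B+(a-b)\log\delta$, so
\[
E^\pm g(X,Y)=C\,\delta^{a+3b(2k-1)}B^{4k-1}\Bigl(\tfrac13\log B+(a-b)\log\delta\Bigr)+O\bigl(\delta^{a+3b(2k-1)}B^{4k-1}\bigr).
\]
The $O(X^{1-\varepsilon/6}Y^{2k-1})$ term is absorbed here since $X\ge B^{1-\varepsilon}$. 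The difference $E_1g-E_2g$ pairs $(a,b)=(l-1,l-1)$ with $(l,l-1)$, and $E_3g-E_4g$ pairs $(l,l)$ with $(l-1,l)$. In each pair the leading terms differ by the factor $1-\delta=\phi(B)^{-1}$. The $(a-b)\log\delta$ corrections are $O(\phi(B)^{-1})$, and each $O(\delta^{\cdot}B^{4k-1})$ error summed over $l$ is $O(B^{4k-1}\phi(B))$, because the geometric sum of $\delta^{6kl}$ is $O(\phi(B))$.

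The main computation is then
\[
(1-\delta)\sum_{l=1}^{k_0-1}\delta^{(l-1)(6k-2)}=\frac{1-\delta}{1-\delta^{6k-2}}+O(\delta^{(6k-2)k_0}).
\]
Since $1-\delta^{6k-2}=(6k-2)(1-\delta)+O((1-\delta)^2)$, this equals $\frac{1}{6k-2}+O(\phi(B)^{-1})$. The tail $\delta^{(6k-2)k_0}\le B^{-(6k-2)\varepsilon}$ is negligible, which is where the choice of $k_0$ with $\delta^{k_0}<B^{-\varepsilon}$ enters. The bound for $E_3g-E_4g$ differs only by a factor $\delta^{O(1)}=1+O(\phi(B)^{-1})$. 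Hence both the lower and upper bounds for $T(B)$ equal
\[
3\cdot C\cdot\tfrac13\cdot\tfrac{1}{6k-2}\,B^{4k-1}\log B=\frac{\mathcal{G}_S(1,2k-1)}{6(2k-1)(3k-1)}B^{4k-1}\log B,
\]
up to an error of $O(B^{4k-1}\log B/\phi(B)+B^{4k-1}\phi(B))$.

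The main obstacle is the term $B^{4k-1}\log B/\phi(B)$. It comes from replacing $\frac{1-\delta}{1-\delta^{6k-2}}$ by $\frac1{6k-2}$ and from the $\delta^{O(1)}$ discrepancies between the upper and lower bounds. With $\phi(B)=O(\sqrt{\log B})$ this term exceeds $B^{4k-1}\phi(B)$ unless $\phi(B)\gg\sqrt{\log B}$. I would therefore first treat the case $\phi(B)\asymp\sqrt{\log B}$, where $\log B/\phi\ll\phi$ and the stated bound follows. For a general $\phi$ satisfying Assumption~\ref{3ass}, I would try to keep the second-order expansion of $(1-\delta)/(1-\delta^{6k-2})$ exactly and check that the $\phi(B)^{-1}\log B$ contributions cancel between the $E_1,E_2$ (respectively $E_3,E_4$) sums. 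If they do not cancel, I would state the result for $\phi(B)\asymp\sqrt{\log B}$, which is all that the proof of Theorem~\ref{main} needs.
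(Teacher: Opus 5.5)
\textbf{There is a genuine gap, and the paper's proof has the same one.} Your route matches the paper's step for step, and your bookkeeping is right. What the sandwich actually delivers is
\[
T(B)=\frac{\mathcal{G}_S(1,2k-1)}{6(2k-1)(3k-1)}B^{4k-1}\log B+O\Bigl(B^{4k-1}\phi(B)+\frac{B^{4k-1}\log B}{\phi(B)}\Bigr).
\]
This is the claimed bound only when $\phi(B)\gg\sqrt{\log B}$. The paper writes $(1-\delta)\sum_{l=1}^{k_0-1}\delta^{(6k-2)(l-1)}=\frac{1}{6k-2}+O(\phi(B)^{-1})$ and then asserts $E_1g(B)-E_2g(B)=\frac{\mathcal{G}_S(1,2k-1)}{18(2k-1)(3k-1)}B^{4k-1}\log B+O(B^{4k-1})$. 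In doing so it silently drops the term $\frac{\mathcal{G}_S(1,2k-1)}{9(2k-1)}B^{4k-1}\log B\cdot O(\phi(B)^{-1})$, which is exactly the term you isolated. So the paper contains no idea that closes this gap.

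\textbf{Your proposed cancellation does not occur.} Write $m=6k-2$. Lemmas~\ref{STS} and \ref{EMS} actually give $T(B)\le E_1M(B)-E_2M(B)+O(\cdot)$ and $T(B)\ge E_4M(B)-E_3M(B)+O(\cdot)$; the orientation printed in Corollary~\ref{Cor}, which you inherited, is reversed. The upper bound carries the factor
\[
\frac{1-\delta}{1-\delta^m}=\frac1m\bigl(1+\tfrac{m-1}{2\phi(B)}+O(\phi(B)^{-2})\bigr).
\]
The lower bound carries an extra factor $\delta^{m-1}$, giving $\frac1m\bigl(1-\frac{m-1}{2\phi(B)}+O(\phi(B)^{-2})\bigr)$. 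Since $\mathcal{G}_S(1,2k-1)>0$, the two sides differ by $\asymp B^{4k-1}\log B/\phi(B)$.

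\textbf{This is structural.} The two sides of Lemma~\ref{STS} differ by the $S$-mass of the boxes $(\delta^lB,\delta^{l-1}B]\times(\delta^{3l}B^2,\delta^{3(l-1)}B^2]$, which is $\asymp(1-\delta)B^{4k-1}\log B$. No finer expansion inside this sandwich can remove it. Taking $\phi$ larger to shrink it inflates the $O(B^{4k-1}\phi(B))$ coming from Proposition~\ref{EjR1}.

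\textbf{Your fallback is not enough for the main theorem.} Theorem~\ref{NN} is deduced via Lemma~\ref{f}, whose hypothesis is the bound for every $\phi$ satisfying Assumption~\ref{3ass}, for instance $\phi(B)=2+\log(1+\log(1+B))$. The single choice $\phi\asymp\sqrt{\log B}$ only yields $N(B)=cB^{4k-1}\log B+O(B^{4k-1}\sqrt{\log B})$.

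Proving the proposition for all admissible $\phi$ needs an ingredient that neither your proposal nor the paper provides, such as a direct evaluation of $T(B)$ with error $O(B^{4k-1})$ that avoids the box approximation.
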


\begin{proof}
As in the proof of Proposition~\ref{SS}, we have
\begin{align*}
&\frac{(E_1I_1+E_1I_+-E_1I_-+4\pi iE_1R_1+6\pi i E_1R_3)(B,B^2)}{(2\pi i)^2}+3E_1g(B)
\\
&-\frac{(E_2I_1+E_2I_+-E_2I_-+4\pi iE_2R_1+6\pi i E_2R_3)(B,B^2)}{(2\pi i)^2}-3E_2g(B)
\\
&\leq T(B)
\\
\leq&\frac{(E_3I_1+E_3I_+-E_3I_-+4\pi iE_3R_1+6\pi i E_3R_3)(B,B^2)}{(2\pi i)^2}+3E_3g(B)
\\
&-\frac{(E_4I_1+E_4I_+-E_4I_-+4\pi iE_4R_1+6\pi i E_4R_3)(B,B^2)}{(2\pi i)^2}-3E_4g(B).
\end{align*}
By Corollary~\ref{Cor}, Proposition~\ref{MIR}, Proposition~\ref{Ipm}, Proposition~\ref{I1}, Proposition~\ref{ER1}, Proposition~\ref{EjR1}, and Proposition~\ref{R2}, the left hand side is
\[
3E_1g(B)-3E_2g(B)+O(B^{4k-1}\phi(B))
\]
and the right hand side is
\[
3E_3g(B)-3E_4g(B)+O(B^{4k-1}\phi(B)).
\]
We have
\begin{align*}
&E_1g(B)-E_2g(B)
\\
&=\sum_{l=1}^{k_0-1}E^+g(\delta^{l-1}B,\delta^{3(l-1)}B^2)-E^-g(\delta^lB,\delta^{3(l-1)}B^2)
\\
&=\sum_{l=1}^{k_0-1}\left(\frac{\mathcal{G}_S(1,2k-1)}{3(2k-1)}(\delta^{l-1}B)(\delta^{3(l-1)}B^2)^{2k-1}\left(\log\delta^{l-1}B-\frac{1}{3}\log\delta^{3(l-1)}B^2\right)\right.
\\
&\qquad\left.+c''(\delta^{l-1}B)(\delta^{3(l-1)}B^2)^{2k-1}+O\left((\delta^{l-1}B)^{1-\varepsilon/6}(\delta^{3(l-1)}B^2)^{2k-1}\right)\right)
\\
&\qquad-\sum_{l=1}^{k_0-1}\left(\frac{\mathcal{G}_S(1,2k-1)}{3(2k-1)}(\delta^lB)(\delta^{3(l-1)}B^2)^{2k-1}\left(\log\delta^lB-\frac{1}{3}\log\delta^{3(l-1)}B^2\right)\right.
\\
&\qquad\left.+c''(\delta^lB)(\delta^{3(l-1)}B^2)^{2k-1}+O\left((\delta^lB)^{1-\varepsilon/6}(\delta^{3(l-1)}B^2)^{2k-1}\right)\right)
\\
&=\sum_{l=1}^{k_0-1}\left(\frac{\mathcal{G}_S(1,2k-1)}{3(2k-1)}\delta^{(6k-2)(l-1)}B^{4k-1}\frac{1}{3}\log B+c''\delta^{(6k-2)l}B^{4k-1}+O(B^{4k-1-\varepsilon/6})\right)
\\
&\qquad-\sum_{l=1}^{k_0-1}\left(\frac{\mathcal{G}_S(1,2k-1)}{3(2k-1)}\delta^{(6k-2)(l-1)+1}B^{4k-1}\left(\log\delta+\frac{1}{3}\log B\right)+c''\delta^{(6k-2)l+1}B^{4k-1}+O(B^{4k-1-\varepsilon/6})\right)
\\
&=\frac{\mathcal{G}_S(1,2k-1)}{9(2k-1)}B^{4k-1}\log B(1-\delta)\sum_{l=1}^{k_0-1}\delta^{(6k-2)(l-1)}-\frac{\mathcal{G}_S(1,2k-1)}{9(2k-1)}B^{4k-1}\log\delta\sum_{l=1}^{k_0-1}\delta^{(6k-2)(l-1)+1}
\\
&\qquad+c''B^{4k-1}(1-\delta)\sum_{l=1}^{k_0-1}\delta^{(6k-2)(l-1)}+O(k_0B^{4k-1-\varepsilon/6}).
\end{align*}
We note that
\[
(1-\delta)\sum_{l=1}^{k_0-1}\delta^{(6k-2)(l-1)}=\frac{1}{6k-2}+O\left(\frac{1}{\phi(B)}\right),\quad\log\delta\sum_{l=1}^{k_0-1}\delta^{(6k-2)(l-1)+1}=O(1).
\]
This implies that
\[
E_1g(B)-E_2g(B)=\frac{\mathcal{G}_S(1,2k-1)}{18(2k-1)(3k-1)}B^{4k-1}\log B+O(B^{4k-1})\quad(B\to\infty).
\]
Similarly, we have
\[
E_3g(B)-E_4g(B)=\frac{\mathcal{G}_S(1,2k-1)}{18(2k-1)(3k-1)}B^{4k-1}\log B+O(B^{4k-1})\quad(B\to\infty).
\]
Therefore, we obtain
\[
T(B)=\frac{\mathcal{G}_S(1,2k-1)}{6(2k-1)(3k-1)}B^{4k-1}\log B+O(B^{4k-1}\phi(B))\quad(B\to\infty).\qedhere
\]
\end{proof}

Since
\[
N^{*}(B)=\frac{8k}{(4^k-1)|B_{2k}|}(S(B,B^2)-T(B))+O(B^{4k-3/2}(\log B)^3)\quad(B\to\infty),
\]
we obtain the following statement.

\begin{prop}\label{Nas}
The asymptotic formula
\[
N^{*}(B)=\frac{4k\mathcal{G}_S(1,2k-1)}{(3k-1)(4^k-1)|B_{2k}|}B^{4k-1}\log B+O(B^{4k-1}\phi(B))\quad(B\to\infty)
\]
holds.
\end{prop}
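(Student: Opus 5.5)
Proposition~\ref{Nas} follows by combining the displayed identity for $N^{*}(B)$ with Proposition~\ref{SS} and the proposition for $T(B)$. The only work is arithmetic with the constants and a check that the error terms are dominated by $B^{4k-1}\phi(B)$.

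First I substitute the two asymptotics into the identity for $N^{*}(B)$. By Proposition~\ref{SS},
\[
S(B,B^2)=\frac{\mathcal{G}_S(1,2k-1)}{3(2k-1)}B^{4k-1}\log B+O(B^{4k-1}).
\]
By the preceding proposition,
\[
T(B)=\frac{\mathcal{G}_S(1,2k-1)}{6(2k-1)(3k-1)}B^{4k-1}\log B+O(B^{4k-1}\phi(B)).
\]
Subtracting, the coefficient of $B^{4k-1}\log B$ is
\[
\frac{\mathcal{G}_S(1,2k-1)}{3(2k-1)}\left(1-\frac{1}{2(3k-1)}\right)=\frac{\mathcal{G}_S(1,2k-1)}{3(2k-1)}\cdot\frac{6k-3}{2(3k-1)}=\frac{\mathcal{G}_S(1,2k-1)}{2(3k-1)}.
\]
Multiplying by $\frac{8k}{(4^k-1)|B_{2k}|}$ gives exactly $\frac{4k\,\mathcal{G}_S(1,2k-1)}{(3k-1)(4^k-1)|B_{2k}|}$, which is the constant in the statement.

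Next I collect the error terms. The difference $S(B,B^2)-T(B)$ carries an error $O(B^{4k-1})+O(B^{4k-1}\phi(B))=O(B^{4k-1}\phi(B))$, since $\phi(1)\ge 2$ and $\phi$ is increasing by Assumption~\ref{3ass}. The remaining term $O(B^{4k-3/2}(\log B)^3)$ is $O(B^{4k-1})$. This term comes from replacing $r_{4k}$ by its multiplicative model $r_{4k}^{*}$, and it is present only for $k\ge2$; for $k=1$ the replacement is exact and the term vanishes. Hence the total error is $O(B^{4k-1}\phi(B))$.

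There is no genuine obstacle here. The one point to check is that the $\log B$-coefficient of $T(B)$ exactly matches the main term, so that no $\log B$ term leaks into the error. Also, the implied constants are uniform because all earlier estimates hold for $B>e^{32}$; this is harmless, since $B\to\infty$.
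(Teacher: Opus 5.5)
Your proposal is correct and is exactly the paper's argument: substitute Proposition~\ref{SS} and the asymptotic formula for $T(B)$ into the identity $N^{*}(B)=\frac{8k}{(4^k-1)|B_{2k}|}(S(B,B^2)-T(B))+O(B^{4k-3/2}(\log B)^3)$. Your computation $\frac{1}{3(2k-1)}-\frac{1}{6(2k-1)(3k-1)}=\frac{1}{2(3k-1)}$ gives the stated constant, and your absorption of the $O(B^{4k-1})$ terms into $O(B^{4k-1}\phi(B))$ (using $\phi\geq 2$) fills in the details the paper leaves implicit.
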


Finally, we establish an asymptotic formula for $N(B)$.

\begin{prop}\label{N}
For all functions $\phi\colon\mathbb{R}_{>0}\to\mathbb{R}$ satisfying Assumption~\ref{3ass}, the asymptotic formula
\[
N(B)=\frac{4k\mathcal{G}_S(1,2k-1)}{(3k-1)(4^k-1)|B_{2k}|\zeta(4k-1)}B^{4k-1}\log B+O(B^{4k-1}\phi(B))\quad(B\to\infty)
\]
holds.
\end{prop}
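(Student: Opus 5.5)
The plan is to combine the Möbius inversion $N(B)=\sum_{d\ge1}\mu(d)N^{*}(B/d)$ with Proposition~\ref{Nas}. The task is to control the error term uniformly in $d$. Write
\[
C=\frac{4k\mathcal{G}_S(1,2k-1)}{(3k-1)(4^k-1)|B_{2k}|}.
\]
First observe that $N^{*}(B/d)=0$ once $B/d<1$. Any counted tuple has $x\neq0$, so $|x|\ge1$, forcing $B/d\ge1$. The sum is therefore finite, running over $d\le B$.

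Next, split the range at $d\le D$ and $D<d\le B$, with $D$ a large constant or a slowly growing parameter. For the main range we want to apply Proposition~\ref{Nas} at $B/d$, which requires $B/d$ to be large. The cleanest route is to first upgrade Proposition~\ref{Nas} to a uniform bound valid for all $Y\ge1$:
\[
N^{*}(Y)=CY^{4k-1}\log Y+O\bigl(Y^{4k-1}\phi(Y)\bigr)\qquad(Y\ge 1).
\]
For $Y$ bounded, both sides are $O(1)$, so absorbing them into the error is harmless. Here Assumption~\ref{3ass}(4), $\phi\ge2$, ensures the error term never degenerates at small $Y$. Since $\phi$ is increasing, $\phi(B/d)\le\phi(B)$.

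Summing then gives
\[
N(B)=CB^{4k-1}\sum_{d\le B}\frac{\mu(d)}{d^{4k-1}}\bigl(\log B-\log d\bigr)+O\Bigl(B^{4k-1}\phi(B)\sum_{d\ge1}d^{-(4k-1)}\Bigr).
\]
We have $4k-1\ge3$, so $\sum_d d^{-(4k-1)}$ converges and the error is $O(B^{4k-1}\phi(B))$. For the main term, use $\sum_{d\le B}\mu(d)d^{-(4k-1)}=\zeta(4k-1)^{-1}+O(B^{2-4k})$. The tail therefore contributes $O(\log B)$, and the $\log d$ part is $O(B^{4k-1})$ because $\sum_d d^{-(4k-1)}\log d$ converges. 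Collecting terms yields $C\zeta(4k-1)^{-1}B^{4k-1}\log B+O(B^{4k-1}\phi(B))$.

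The only delicate point is the uniformity step. Proposition~\ref{Nas} is an asymptotic as $B\to\infty$, and its derivation used $B>e^{32}$. So I would state explicitly that the implied constant yields the bound for $Y\ge Y_0$. For $1\le Y\le Y_0$, the trivial bound $N^{*}(Y)\ll Y^{4k+2}\ll_{Y_0}1\le\phi(Y)$ covers the rest. Everything else is routine bookkeeping with absolutely convergent Dirichlet series. (The remark that the $O(B^{4k-1}\phi(B))$ can be sharpened to $O(B^{4k-1})$ in Theorem~\ref{main} presumably follows later by choosing $\phi$ suitably and is not needed here.)
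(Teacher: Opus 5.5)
This is correct and is the paper's own argument (M\"obius inversion, Proposition~\ref{Nas}, and absolute convergence of $\sum_d\mu(d)d^{1-4k}$ and $\sum_d\mu(d)d^{1-4k}\log d$), carried out more carefully: the paper sums over all $d\ge1$, keeps $\mu(d)$ inside the $O$-term, and tacitly uses that Proposition~\ref{Nas} holds uniformly in $B/d$ and that $\phi(B/d)\le\phi(B)$, whereas your truncation at $d\le B$, your uniform version for $Y\ge1$ (via $\phi(Y)\ge\phi(1)\ge2$), and the monotonicity of $\phi$ justify exactly these steps. Two side remarks: the tail $CB^{4k-1}\log B\sum_{d>B}\mu(d)d^{1-4k}$ is $O(B\log B)$, not $O(\log B)$, which is still harmless; and the identity $N(B)=\sum_d\mu(d)N^{*}(B/d)$, taken by both you and the paper from the start of the section, should read $2N(B)=\sum_d\mu(d)N^{*}(B/d)$, because $N^{*}$ counts both primitive representatives $\pm(x,y_1,\dots,y_{4k},z)$ of each point, so this changes the leading constant but not the argument.
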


\begin{proof}
By the inclusion-exclusion principle, we obtain
\begin{align*}
&N(B)
\\
&=\sum_{d=1}^\infty\mu(d)N^{*}\left(\frac{B}{d}\right)
\\
&=\frac{4k\mathcal{G}_S(1,2k-1)}{(3k-1)(4^k-1)|B_{2k}|}\sum_{d=1}^\infty\mu(d)\left(\left(\frac{B}{d}\right)^{4k-1}(\log B-\log d)\right)+O\left(\sum_{d=1}^\infty\mu(d)\left(\frac{B}{d}\right)^{4k-1}\phi\left(\frac{B}{d}\right)\right)
\\
&=\frac{4k\mathcal{G}_S(1,2k-1)}{(3k-1)(4^k-1)|B_{2k}|}\left(\sum_{d=1}^\infty\frac{\mu(d)}{d^{4k-1}}\right)B^{4k-1}\log B
\\
&\qquad-\frac{4k\mathcal{G}_S(1,2k-1)}{(3k-1)(4^k-1)|B_{2k}|}\left(\sum_{d=1}^\infty\frac{\mu(d)\log d}{d^{4k-1}}\right)B^{4k-1}+O\left(B^{4k-1}\phi(B)\sum_{d=1}^\infty\frac{\mu(d)}{d^{4k-1}}\right).
\end{align*}
The proposition follows from
\[
\sum_{d=1}^\infty\frac{\mu(d)}{d^{4k-1}}=\frac{1}{\zeta(4k-1)},\qquad\sum_{d=1}^\infty\frac{\mu(d)\log d}{d^{4k-1}}=O(1).\qedhere
\]
\end{proof}

We introduce the following lemma to eliminate $\phi(B)$ from the error term.

\begin{lem}\label{f}
Let $f\colon\mathbb{R}_{>0}\to\mathbb{R}$ be a function. 
We assume that the asymptotic formula
\[
f(B)=O(\phi(B))\quad(B\to\infty)
\]
holds for all functions $\phi:\mathbb{R}_{>0}\to\mathbb{R}_{>0}$ satisfying Assumption~\ref{3ass}.
Then $f(B)=O(1)$.
\end{lem}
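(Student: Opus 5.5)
We argue by contradiction. Suppose $f(B)$ is not $O(1)$. Then $|f|$ is unbounded as $B\to\infty$, so we can pick a sequence $B_1<B_2<\cdots$ with $B_n\to\infty$ and $|f(B_n)|\geq n^2$ (or any rate we like). The plan is to build a $\phi$ satisfying Assumption~\ref{3ass} that grows so slowly that $|f(B_n)|/\phi(B_n)$ is still unbounded. This contradicts the hypothesis $f(B)=O(\phi(B))$ for that particular $\phi$.

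For the construction, we first pass to a subsequence. We may assume $B_1>e$ and $B_{n+1}>B_n$ fast enough that $\sqrt{\log B_n}\geq n+2$; this is possible since $B_n\to\infty$. Then define $\phi$ piecewise linearly, or by any strictly increasing interpolation. Set $\phi(B)=2+B/B_1$ on $(0,B_1]$, so that $\phi(1)\geq2$ and $\phi(B_1)=3$. On $[B_n,B_{n+1}]$, interpolate linearly from $\phi(B_n)=n+2$ to $\phi(B_{n+1})=n+3$. This $\phi$ is strictly increasing and tends to infinity.

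For condition (3), take $B\in[B_n,B_{n+1}]$. Then $\phi(B)\leq n+3\leq \sqrt{\log B_n}+1\leq 2\sqrt{\log B}$, so $\phi(B)=O(\sqrt{\log B})$. Hence $\phi$ satisfies Assumption~\ref{3ass}.

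On the other hand, $|f(B_n)|/\phi(B_n)\geq n^2/(n+2)\to\infty$. So $f(B)\neq O(\phi(B))$, which is the desired contradiction.

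The only delicate point is arranging the growth of $\phi$ so that it is simultaneously $O(\sqrt{\log B})$ and $o(|f(B_n)|)$ along the sequence. Choosing the values $\phi(B_n)=n+2$ and forcing $B_n$ to be sparse enough handles both requirements at once. The target function in the lemma is $\mathbb{R}_{>0}$-valued, and our $\phi$ is indeed positive.
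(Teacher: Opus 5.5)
Your proof is correct. It follows the same overall strategy as the paper: argue by contradiction and build a slowly growing piecewise-linear $\phi$ along a sequence on which $|f|$ blows up. The choice of $\phi$, however, differs in a way that matters.

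The paper lets $\phi$ interpolate the values $|f(x_n)|$ themselves and tests the hypothesis against $\sqrt{\phi}$, so the contradiction reads $|f(x_n)|\le C\sqrt{|f(x_n)|}$. That construction is self-calibrating and needs no sparsification. But the assertion that $\sqrt{\phi}$ satisfies condition (3) of Assumption~\ref{3ass} is not automatic. It holds only after one first applies the hypothesis to a fixed admissible function such as $2+\sqrt{\log(1+B)}$ to obtain $f(B)=O(\sqrt{\log B})$, and then uses the linear interpolation to control $\phi$ between the nodes. The paper leaves this implicit. Its normalization on $(0,x_1]$ also only guarantees $\sqrt{\phi(1)}>\sqrt{2}$ rather than $\geq 2$.

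You instead decouple the values of $\phi$ from those of $f$. Prescribing $\phi(B_n)=n+2$ at points with $|f(B_n)|\geq n^2$ and $\sqrt{\log B_n}\geq n+2$ makes all four conditions checkable by inspection. It also works for completely arbitrary $f$, with no a priori growth bound. The only cost is the harmless extraction of a sparse subsequence; relabeling keeps $|f(B_{n_j})|\geq n_j^2\geq j^2$.
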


\begin{proof}
Assume that $f(B)$ is not $O(1)$.
Then there exists a strictly increasing sequence $\{x_n\}_{n=1}^\infty$ of real numbers such that the following conditions are satisfied:
\begin{enumerate}
\item the sequence $\{|f(x_n)|\}_{n=1}^\infty$ is strictly increasing,
\item $\lim_{n\to\infty}x_n=\infty$,
\item $\lim_{n\to\infty}|f(x_n)|=\infty$,
\item $x_1\geq1$, and
\item $|f(x_1)|\geq4$.
\end{enumerate}
We define a function $\phi\colon\mathbb{R}_{>0}\to\mathbb{R}$ by
\[
\phi(B)=\left\{\begin{array}{ll}
\displaystyle\frac{|f(x_{n+1})|-|f(x_n)|}{x_{n+1}-x_n}(x_n-B)+|f(x_n)|&\text{if }x_n<B\leq x_{n+1},
\\
\displaystyle\frac{|f(x_1)|}{2x_1}(x_1+B)&\text{if }0<B\leq x_1.
\end{array}\right.
\]
Then the function $B\mapsto\sqrt{\phi(B)}$ satisfies Assumption~\ref{3ass}.
This means $f(B)=O(\sqrt{\phi(B)})$.
Hence there exist positive real numbers $C_1$ and $C_2$ such that $|f(B)|\leq C_2\sqrt{\phi(B)}$ for all $B\geq C_1$.
Let $n$ be a positive integer such that $x_n\geq C_1$ and $|f(x_n)|>C_2^2$.
Then we have
\[
|f(x_n)|\leq C_2\sqrt{\phi(x_n)}<\sqrt{|f(x_n)|}\sqrt{\phi(x_n)}=|f(x_n)|.
\]
This is a contradiction.
\end{proof}

By applying Lemma~\ref{f} to the function
\[
f(B):=\frac{N(B)}{B^{4k-1}}-\frac{4k\mathcal{G}_S(1,2k-1)}{(3k-1)(4^k-1)|B_{2k}|\zeta(4k-1)}\log B,
\]
we obtain the main theorem.

\begin{theorem}\label{NN}
The asymptotic formula
\[
N(B)=\frac{4k\mathcal{G}_S(1,2k-1)}{(3k-1)(4^k-1)|B_{2k}|\zeta(4k-1)}B^{4k-1}\log B+O(B^{4k-1})
\]
holds.
\end{theorem}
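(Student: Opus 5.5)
The plan is to derive Theorem~\ref{NN} directly from Proposition~\ref{N} and Lemma~\ref{f}, with the function $f$ given just before the statement. Write $C$ for the leading constant
\[
C=\frac{4k\mathcal{G}_S(1,2k-1)}{(3k-1)(4^k-1)|B_{2k}|\zeta(4k-1)}
\]
and set $f(B)=N(B)B^{1-4k}-C\log B$ for $B>0$.

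First, I fix an arbitrary $\phi\colon\mathbb{R}_{>0}\to\mathbb{R}_{>0}$ satisfying Assumption~\ref{3ass}. Proposition~\ref{N} applies to this $\phi$ and gives $N(B)=CB^{4k-1}\log B+O(B^{4k-1}\phi(B))$. Dividing by $B^{4k-1}$ yields $f(B)=O(\phi(B))$ as $B\to\infty$. Since $\phi$ was arbitrary, the hypothesis of Lemma~\ref{f} holds, and the lemma gives $f(B)=O(1)$. Multiplying back by $B^{4k-1}$ produces exactly the asymptotic formula in the statement.

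The one point needing care is checking that Proposition~\ref{N} really holds for every admissible $\phi$, including the particular one built in the proof of Lemma~\ref{f}. In the chain behind it, $\phi$ enters only through $\delta=1-\phi(B)^{-1}$ and through the bound $k_0=O(\phi(B)\log B)=O((\log B)^2)$. That bound uses $\phi(B)=O(\sqrt{\log B})$, which is condition (3) of Assumption~\ref{3ass}. The remaining estimates (Lemma~\ref{S}, Propositions~\ref{I1} and~\ref{EjR1}, the geometric sums $\sum\delta^{cl}=O(\phi(B))$, and the $T(B)$ computation) use only $\phi\geq2$, monotonicity and $\phi\to\infty$, which are conditions (1), (2) and (4).

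There is also a minor issue in the M\"obius step of Proposition~\ref{N}: the term $\phi(B/d)$ is bounded by $\phi(B)$ because $\phi$ is increasing. For small $B/d$ one uses the trivial bound $N^{*}(B/d)\ll(B/d)^{4k-1+\epsilon}$, and the resulting sum over $d$ converges since $4k-1>1$. With these points checked, the deduction is immediate.

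Finally, positivity of $\mathcal{G}_S(1,2k-1)$, asserted in Theorem~\ref{main}, follows from Proposition~\ref{Fzeta}, which says the Euler product converges absolutely and does not vanish. Each local factor $\mathcal{G}_p(1,2k-1)$ is real and positive because $\mathcal{F}_p$ has nonnegative coefficients there.
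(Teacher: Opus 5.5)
Your reduction is exactly the paper's. For each admissible $\phi$, Proposition~\ref{N} gives $f(B)=O(\phi(B))$ for $f(B)=N(B)B^{1-4k}-C\log B$, and Lemma~\ref{f} upgrades this to $f(B)=O(1)$.

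The gap is in the audit you add: the claim that Proposition~\ref{N} holds for \emph{every} $\phi$ satisfying Assumption~\ref{3ass} because ``the $T(B)$ computation'' uses only conditions (1), (2), (4). That claim is false, and here you inherit a slip from the paper. In evaluating $E_1g(B)-E_2g(B)$, the leading piece is $\frac{\mathcal{G}_S(1,2k-1)}{9(2k-1)}B^{4k-1}\log B$ times $(1-\delta)\sum_{l=1}^{k_0-1}\delta^{(6k-2)(l-1)}$. With $\delta=1-\phi(B)^{-1}$,
\[
(1-\delta)\sum_{l=1}^{k_0-1}\delta^{(6k-2)(l-1)}=\frac{1}{6k-2}+\frac{6k-3}{2(6k-2)\,\phi(B)}+O\bigl(\phi(B)^{-2}\bigr).
\]
The $1/\phi$ correction has a nonzero coefficient. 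It therefore produces an error of exact order $B^{4k-1}\log B/\phi(B)$, not the $O(B^{4k-1})$ the paper asserts. The lower-bound combination has the same correction with the opposite sign, so the two do not cancel.

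This is intrinsic to the method, not to the bookkeeping. The two sides of Lemma~\ref{STS} differ by the weighted number of pairs $(n,d)$, $d\mid n^3$, with $\delta^lB<n\le\delta^{l-1}B$ and $\delta^{3l}B^2<d\le\delta^{3(l-1)}B^2$. That count is $\asymp B^{4k-1}\log B/\phi(B)$. A sandwich with logarithmic mesh $1/\phi(B)$ cannot locate $T(B)$ more precisely than that.

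Consequently, what the chain actually proves is $f(B)=O\bigl(\phi(B)+\log B/\phi(B)\bigr)$. By AM--GM this is never better than $O(\sqrt{\log B})$. Under condition (3) the term $\log B/\phi(B)$ dominates, so for admissible $\phi$ with $\phi(B)=o(\sqrt{\log B})$ the bound $f=O(\phi)$ is not established. The hypothesis of Lemma~\ref{f} is therefore not verified, and the $\phi$-elimination gives nothing. The argument yields only
\[
N(B)=CB^{4k-1}\log B+O\bigl(B^{4k-1}\sqrt{\log B}\bigr),
\]
taking $\phi\asymp\sqrt{\log B}$.

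Enlarging $\phi$ beyond $\log B$ does not rescue it either. The terms $E_jR_1(B)$ are bounded block by block by $O(B^{4k-1}\phi(B))$, with no cancellation, and that bound would then swamp the main term. Reaching $O(B^{4k-1})$ needs an additional idea. One possibility is a treatment of the secondary $R_1$-contribution, and of $T(B)$, that exhibits cancellation between adjacent blocks, so that $\phi$ may be taken $\gg\log B$.

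Your remarks on the M\"obius step and on the positivity of $\mathcal{G}_S(1,2k-1)$ are fine.
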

Finally, we compute the constant $\mathcal{G}_S(1,2k-1)=\prod_p\mathcal{G}_p(1,2k-1)$.
By the expression for $\mathcal{G}_p(s,w)$ in Proposition~\ref{G}, we obtain the following proposition.

\begin{prop}\label{Gp}
The local factor $\mathcal{G}_p(1,2k-1)$ is given as follows.
\begin{enumerate}
\item When $p\in S\setminus\{2\}$, we have
\[
\mathcal{G}_p(1,2k-1)=\left(1+\frac{2}{p}+\frac{3}{p^{2k}}+\frac{2p+1}{p^{4k}}\right)\left(1-\frac{1}{p}\right)\left(1-\frac{1}{p^{6k-2}}\right)^{-1}.
\]
\item When $p\notin S\cup\{2\}$, we have
\begin{align*}
&\mathcal{G}_p(1,2k-1)
\\
&=\left(1-\frac{1}{p^3}+\frac{2p^2-p-1}{p^{2k+2}}+\frac{p^2-2p+1}{p^{4k+1}}-\frac{2p^2-p-1}{p^{6k+1}}-\frac{p^2+p-2}{p^{8k}}\right)\left(1-\frac{1}{p^{4k-1}}\right)^{-1}\left(1-\frac{1}{p^{6k-2}}\right)^{-1}.
\end{align*}
\item When $2\in S$, we have
\[
\mathcal{G}_2(1,2k-1)=1-\frac{(-1)^k}{1-2^{2k-1}}-\frac{(-1)^k(1-2^{2k})(1+2^{-2k}+2^{-4k+1})}{4(1-2^{2k-1})(1-2^{-6k+2})}.
\]
\item When $2\notin S$, we have
\[
\mathcal{G}_2(1,2k-1)=\frac{15}{128}\left(\frac{(-1)^k}{1-2^{2k-1}}\right)-\frac{(-1)^k(1-2^{2k})(1+2^{-2k-1})(1-2^{-6k+1})}{4(1-2^{-4k+1})(1-2^{-6k+2})(1-2^{2k-1})}.
\]
\end{enumerate}
\end{prop}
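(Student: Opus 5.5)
The plan is to evaluate each local factor from the closed forms already obtained in the proof of holomorphy of $\mathcal{F}_p$ on $R$, together with Definition~\ref{G}. At $(s,w)=(1,2k-1)$ we have $x=p^{-1}$, $y=p^{-(2k-1)}$ and $z=p^{2k-1}$. This gives $yz=1$, $xy^3z^3=p^{-1}$, $xy^2z^2=p^{-1}$ and $xy^2=p^{-4k+1}$. Moreover, $(1-p^{-s-2(w-2k+1)})=(1-p^{-1})$ and $(1-p^{-s-3(w-2k+1)})=(1-p^{-1})$ at this point, so the normalising factor in Definition~\ref{G} is $(1-p^{-1})^3$. We substitute these values case by case, always cancelling the three factors $(1-p^{-1})$ against the poles of $\mathcal{F}_p$.

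\emph{Case $p\in S\setminus\{2\}$.} We use
\[
\mathcal{F}_p=\frac{N(x,y,z)}{(1-x)(1-xy^3)(1-xy^3z^3)}.
\]
The factors $(1-x)$ and $(1-xy^3z^3)$ each become $1-p^{-1}$ and cancel two of the three normalising factors. The factor $1-xy^3$ becomes $1-p^{-6k+2}$, which produces $(1-p^{-6k+2})^{-1}$. The numerator $N$ is evaluated monomial by monomial; for instance $xyz=p^{-1}$ and $xy^2z^2=p^{-1}$. Grouping the terms should give $1+2/p+3/p^{2k}+(2p+1)/p^{4k}$, and the leftover normalising factor supplies $(1-1/p)$.

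\emph{Case $p\notin S\cup\{2\}$.} We use the second expression, the difference of the same fraction and
\[
\frac{xy(1+z)}{(1-xy^2)(1-xy^2z^2)}.
\]
Here $(1-xy^2z^2)=(1-p^{-1})$ and $(1-xy^2)=(1-p^{-4k+1})$. After multiplying by $(1-p^{-1})^3$, the first part gives the previous case's value; note that the Case~1 value is exactly this first part. The second part gives
\[
-\frac{p^{-2k}(1+p^{2k-1})(1-p^{-1})^2}{1-p^{-4k+1}}.
\]
Putting everything over the common denominator $(1-p^{-4k+1})(1-p^{-6k+2})$ and expanding the numerator should yield the stated polynomial in $p^{-1}$ and $p^{-2k}$. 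This expansion is the step I expect to be the main obstacle, purely because of bookkeeping. I would check it by comparing a few coefficients and by specialising $k=1$ and a small $p$ numerically. Alternatively, one could substitute directly into $1+F(p^{-1},p^{-2k+1},p^{2k-1})$ divided by $(1-p^{-4k+1})(1-p^{-6k+2})$, using the form of $\mathcal{G}_p$ from the proof of the convergence proposition; this gives an independent check.

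\emph{Case $p=2$.} We use $\mathcal{F}_2=A\cdot(\ldots)+B\cdot(\ldots)$ in each of the two subcases. The $A$-part has $yz=1$, so its denominator factors all become $1-2^{-1}$. It contributes $A$ times $3$ (respectively times $(1+2^{-1})(1-2^{-1})$ over $(1-2^{-1})^2$ after cancellation), multiplied by the appropriate powers of $1/2$ from the normalisation. The $B$-part has denominators $(1-2^{-1})(1-2^{-6k+2})$, and additionally $(1-2^{-4k+1})$ when $2\notin S$. It is multiplied by the remaining $(1-2^{-1})^2=1/4$, which explains the factor $1/4$ in the statement. Evaluating the numerators $1+xy+xy^2$ and $(1+x^2y)(1-x^2y^3)$ at $x=1/2$, $y=2^{-2k+1}$ gives $1+2^{-2k}+2^{-4k+1}$ and $(1+2^{-2k-1})(1-2^{-6k+1})$ respectively. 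Simplifying the $A$-part constant then gives the stated rational coefficients.
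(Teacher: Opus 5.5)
For odd $p$ your route is the paper's own: substitute $x=p^{-1}$, $yz=1$ into the closed forms of $\mathcal{F}_p$ and multiply by $(1-p^{-1})^3$. It goes through. Item (1) is immediate. In item (2) the numerator
\[
(1-p^{-1})N\,(1-p^{1-4k})-(p^{-2k}+p^{-1})(1-p^{-1})^2(1-p^{2-6k})
\]
expands, coefficient by coefficient in powers of $p^{-2k}$, to exactly the stated polynomial. It also coincides with $1+F(p^{-1},p^{1-2k},p^{2k-1})$, so your proposed cross-check works.

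The gap is at $p=2$, and it has two layers.

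\emph{Your evaluations are wrong.} At $x=\tfrac12$, $yz=1$ one has $1+xyz+xy^2z^2=2$, not $3$. Also $(1+x^2yz)(1-x^2y^3z^3)=(1+2^{-2})(1-2^{-2})=\tfrac{15}{16}$, not $(1+2^{-1})(1-2^{-1})$. After the factor $\tfrac18$, the $A$-parts are therefore $A$ and $\tfrac{15}{16}A$. The first reproduces (3). But $\tfrac{15}{16}A$ is not the term $\tfrac{15}{128}\cdot\frac{(-1)^k}{1-2^{2k-1}}$ of (4). So your closing claim that ``simplifying gives the stated rational coefficients'' fails for (4), even relative to the displayed formula.

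\emph{The closed forms for $\mathcal{F}_2$ are themselves incorrect.} Consequently (3) and (4), as written, are not the values of $\mathcal{G}_2(1,2k-1)$ and cannot be proved.
\begin{enumerate}
\item[(a)] The identity $r_{4k}^{*}(2^b)=Az^b+B$ holds only for $b\ge1$. Since $A+B=1-2(-1)^k\neq1=r_{4k}^{*}(1)$, the true $\mathcal{F}_2$ contains an extra term $2(-1)^k/(1-x)$ in both subcases. This contributes $(-1)^k/2$ to $\mathcal{G}_2(1,2k-1)$.
\item[(b)] For $2\notin S$, the sum $\sum_{a\ge0}x^a\sum_{b\ne 2a-1}u^b$ equals $\frac{1+x^2u-x^2u^3-x^3u^4}{(1-x)(1-xu^2)(1-xu^3)}$. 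The numerator does not factor as $(1+x^2u)(1-x^2u^3)$: compare at $u=1$, where the series is $\frac{1+x+x^2}{(1-x)^2}$.
\end{enumerate}
A concrete check: for $k=1$, where $r_4^{*}(2^b)=3$ for $b\ge1$, direct summation gives $\mathcal{G}_2(1,1)=\tfrac18\sum_{a\ge0}2^{-a}(4-3\cdot 8^{-a})=\tfrac35$ if $2\in S$, whereas (3) gives $\tfrac{11}{10}$. If $2\notin S$, direct summation gives $\tfrac{69}{140}$, which (4) does not produce either.

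So the numerical test you planned for (2) is exactly what is needed at $p=2$. Carrying out the computation correctly, with $b=0$ treated separately, yields
\[
\mathcal{G}_2(1,2k-1)=A+\frac{(1+2^{-2k}+2^{1-4k})B}{4(1-2^{2-6k})}+\frac{(-1)^k}{2}\qquad(2\in S),
\]
\[
\mathcal{G}_2(1,2k-1)=\frac78A+\frac{(1+2^{-2k-1}-2^{1-6k}-2^{1-8k})B}{4(1-2^{1-4k})(1-2^{2-6k})}+\frac{(-1)^k}{2}\qquad(2\notin S),
\]
with $A,B$ as in the proof of holomorphy of $\mathcal{F}_p$. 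I checked these against direct summation for $k=1$, and also for $k=2$ with $2\in S$, where both give $\tfrac{729}{682}$. Items (3) and (4) need to be corrected along these lines rather than proved as stated.
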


\end{document}